\theoremstyle{definition}
\newtheorem{defn}{Definition}[section]
\newtheorem{ex}{Example}[section]
\theoremstyle{plain}
\newtheorem{theorem}{Theorem}[section]
\newtheorem{lemma}{Lemma}[section]
\newtheorem{cor}{Corollary}[section]
\newtheorem{prop}{Proposition}[section]
\DeclareMathOperator{\Rm}{Rm}
\DeclareMathOperator{\End}{End}
\DeclareMathOperator{\Span}{Span}
\DeclareMathOperator{\Sym}{Sym}
\DeclareMathOperator{\Id}{Id}
\DeclareMathOperator{\SL}{SL}
\DeclareMathOperator{\Vol}{Vol}
\DeclareMathOperator{\length}{length}
\title{Regular ambitoric $4$-manifolds: from Riemannian Kerr to a complete classification}
\author{Kael Dixon}
\begin{document}

\maketitle

\begin{abstract}
 We show that the conformal structure for the Riemannian analogues of Kerr black-hole metrics can be given an ambitoric structure. We then discuss the properties of the moment maps. In particular, we observe that the moment map image is not locally convex near the singularity corresponding to the ring singularity in the interior of the black hole. We then proceed to classify regular ambitoric $4$-orbifolds with some completeness assumptions. The tools developed also allow us to prove a partial classification of compact Riemannian $4$-manifolds which admit a Killing $2$-form.
\end{abstract}
\tableofcontents
\section{Introduction}

The motivating examples for this paper are the Riemannian analogues of the Kerr family of metrics. The Kerr metrics have Lorentzian metrics and are used to model isolated rotating black holes. One of the interesting properties of the Kerr metrics is that their Weyl curvature is algebraically special of type $D$. The Riemannian analogue of this type $D$ condition is that the self- and anti-self dual Weyl curvature tensors $W_\pm$ both have a unique $1$-dimensional eigenspace. By the Riemannian analogue of the Goldberg-Sachs theorem \cite{apostolov1998generalized}, when the metric is additionally Einstein, this condition is equivalent to the existence of two integrable complex structure $J_\pm$ which induce opposite orientations, are compatible with the metric, and are unique upto sign. We say that $(g,J_+,J_-)$ is an \emph{ambihermitian} structure. If an ambihermitian manifold (or more generally orbifold) $(M,g,J_+,J_-)$ admits the action of a $2$-torus $\mathbb T$ and metrics $g_\pm$ in the conformal class $[g]$ such that $(M,g_\pm,J_\pm,\mathbb T)$ are both K\"ahler toric orbifolds, then $(M,[g],J_+,J_-,\mathbb T)$ is \emph{ambitoric}. We show that the Riemannian Kerr metrics admit a regular ambitoric structure. See definition \ref{defRegAmbi} for the definition of regular.

The main purposes of this paper are to understand the momentum map of the Kerr examples and to classify regular ambitoric $4$-orbifolds under some completeness assumptions.

One of the tools for studying toric manifolds is the moment map, which in the case of compact toric manifolds identifies the space of torus orbits with a convex polytope in the dual Lie algebra of the torus \cite{atiyah1984moment}. However, in the case of the Kerr ambitoric structure, we find that the moment map images are not convex (even locally) in the regions corresponding to the ring singularity in the interior of the black hole.

To understand the moment map image of Kerr, we use the language of \emph{folded symplectic structures} studied in \cite{da2011symplectic}. Folded symplectic structures are $2$-forms which are closed and non-degenerate away from certain hypersurfaces which are called \emph{folding hypersurfaces}. We interpret the K\"ahler forms of the Kerr ambitoric structure as folded symplectic structures on manifolds which have folding hypersurfaces where the local convexity of the moment map fails. This result can be understood in the context of Example 3.11 in \cite{da2011symplectic}, which shows that there is no reason for the moment map to be locally convex near a folding hypersurface.

In the second half of the paper, we classify complete regular ambitoric orbifolds. We find it convenient to work with slight generalizations of regular ambitoric orbifolds which we call \emph{regular ambitoric orbifold completions}, for which the full ambitoric structure may only be defined on a dense open subset. More precisely, we call a Riemannian manifold \emph{completable} if its Cauchy completion is an orbifold. A regular ambitoric orbifold completion is defined to be the Cauchy completion of a completable regular ambitoric manifold equipped with a free torus action. Thus every complete regular ambitoric orbifold is a regular ambitoric orbifold completion, since the set of its free torus orbits is an open dense submanifold.

Regular ambitoric $4$-orbifolds have been locally classified on the set of orbits where the torus action is free \cite{apostolov2013ambitoric}. We use this local classification to generate a set of examples of regular ambitoric $4$-manifolds with free torus action, which we call \emph{ambitoric ansatz spaces}. These examples include the free orbits of the Riemannian Kerr examples. These ambitoric ansatz spaces depend on a symmetric quadratic polynomial $q(x,y)$ and two functions of one variable $A(x)$ and $B(y)$, with the manifold given by
 \begin{align*}
 &\mathcal A(q,A,B,\mathbb T):=\\&
 \bigg\{\big(x,y,\vec t\big)\in\mathbb R^2\times \mathbb T:
 A(x)>0,B(y)>0,q(x,y)(x-y)\neq 0\bigg\},
 \end{align*}
 equipped with the ambitoric structure given in theorem \ref{thmAmbitoric}.

Note that an ambitoric structure gives a conformal class of metrics. We will want to study metric properties, so we need to make a choice of metric in this conformal class. We pick out some distinguished metrics to study, which are unique up to homothety: the K\"ahler metrics $g_\pm$, and the barycentric metric $g_0$, which is an average of $g_+$ and $g_-$. We also study compatible metrics whose Ricci tensor is diagonal, meaning that it is invariant under both $J_+$ and $J_-$. These include the Einstein metrics, and they come in a family $\{g_p\}_{p\perp q}$, indexed by quadratic polynomials $p$ orthogonal to the quadratic polynomial $q$ with respect to the inner product induced by the discriminant.

The first step in our classification is to classify ambitoric ansatz spaces which are completable with respect to our chosen metric $g\in\{g_0,g_\pm\}\cup\{g_p\}_{p\perp q}$. The strategy taken is to first compare the $g$-Cauchy completion to the Cauchy completion with respect to a more convenient metric where the Cauchy completion is easy to compute. This comparison is done using the tools of Busemann completions, the details of which are provided in an appendix. We then decompose the convenient boundary into components which we describe as \emph{folds/corners/edges} according to their behaviour under the moment map. When studying $g_p$, we also have to study the vanishing locus of $p(x,y)$, which we denote by $P$. We further define a component of the convenient boundary is \emph{infinitely distant} if it does not lie in the $g$-Cauchy completion, and a fold is \emph{proper} if it is not an edge or a corner. We distinguish between folds being \emph{positive/negative} in such a way that proper positive/negative folds are folding hypersurfaces for suitable extensions of the positive/negative K\"ahler forms. By studying the $g$-Cauchy completions, we get the following classification result:

\begin{theorem}\label{thmClassCompletable}
	Let $\mathcal A$ be an ambitoric ansatz space and $g\in\{g_\pm,g_0\}\cup\{g_p\}_{p\perp q}$. Then $\mathcal A$ is $g$-completable if and only if the following conditions hold:
	\begin{itemize}
		\item $\mathcal A$ has no proper folds.
		\item Every edge is either infinitely distant or has a compatible normal.
		\item If an edge is a fold but not infinitely distant, then $g\in\{g_-,g_p\}$.
		\item Every corner is infinitely distant unless it is at the intersection of two edges which are not infinitely distant. If such a corner is a positive (respectively negative) fold, then $g\in\{g_+,g_p\}$ (respectively $g\in\{g_-,g_p\}$). If $g=g_p$, then such a corner is not part of $P$.
	\end{itemize}
	Moreover, if $\mathcal A$ is $g$-completable, then its ambitoric structure extends to $\mathcal A_C^{g}$ if and only if every fold is infinitely distant.
\end{theorem}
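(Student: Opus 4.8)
The plan is to reduce the completion question to an analysis of the two-dimensional base $\{(x,y): A(x)>0, B(y)>0, q(x,y)(x-y)\neq 0\}$, since the free torus $\mathbb{T}$-action has compact orbits, so the $g$-Cauchy completion is governed by (i) which boundary points of this base lie at finite $g$-distance and (ii) how the fibre metric degenerates there. First I would write out the explicit form of each metric $g\in\{g_\pm,g_0\}\cup\{g_p\}_{p\perp q}$ from theorem \ref{thmAmbitoric}, isolating the conformal factors as rational functions of $x,y$ built from $q(x,y)$, $x-y$, and (for $g_p$) the polynomial $p(x,y)$. The boundary of the base decomposes along the loci $\{A(x)=0\}$, $\{B(y)=0\}$ (the edges), $\{x=y\}$ and $\{q(x,y)=0\}$ (the folds), together with their pairwise intersections (the corners). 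Rather than estimate $g$-distances directly, I would follow the strategy announced above: fix a convenient comparison metric $\tilde g$ on the base --- one for which $A,B$ enter as in a standard K\"ahler-toric chart so that its Cauchy completion is immediate --- and transport the analysis to $g$ via the Busemann-completion machinery of the appendix. The point of this comparison is that $\tilde g$ and $g$ differ only by the explicit conformal factors, so a boundary component lies at finite $g$-distance exactly when the integral of the conformal factor along paths approaching it converges.

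For the edges and corners the criterion is of Delzant type. Along an edge $\{A(x)=0\}$ (or $\{B(y)=0\}$) one circle of $\mathbb{T}$ collapses, and the completion is a smooth orbifold boundary precisely when the collapsing direction is a primitive lattice vector compatible with the normal --- this is what I would package as the \emph{compatible normal} condition, recovering the standard toric smoothness requirement. At a corner where two such edges meet, two circles collapse and one needs the pair of normals to span the lattice up to finite index, which is governed by each edge being compatible; this is why the theorem only admits corners at intersections of two non-infinitely-distant edges. Edges and corners that fail these conditions must therefore be pushed to infinite distance, which forces the divergence of the conformal-factor integral and thereby selects the allowed metrics.

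The crux of the argument --- and the step I expect to be the main obstacle --- is the fold analysis, where the local convexity of the moment map fails and a K\"ahler form degenerates. Here I would carry out a careful asymptotic expansion of each conformal factor transverse to $\{x=y\}$ and to $\{q=0\}$, distinguishing positive from negative folds by which of $\omega_+,\omega_-$ acquires the folding degeneracy. A proper fold carries a genuine two-dimensional conformal collapse rather than a circle collapse, so I would show that whenever it sits at finite $g$-distance the metric completion cannot be an orbifold: the transverse profile is that of a cusp or cone with continuously-varying cone angle, ruled out by the orbifold condition, hence the requirement of \emph{no proper folds}. For fold-edges and fold-corners the same expansion shows that exactly one of $g_+,g_-$ sends the fold to infinite distance while the other keeps it finite, and the sign bookkeeping of the factor $(x-y)$ against $q$ is what produces the asymmetry ($g_-$ or $g_p$ for edge-folds; $g_+$ or $g_p$ versus $g_-$ or $g_p$ for positive versus negative corner-folds). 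The case $g=g_p$ requires the extra analysis of the locus $P=\{p=0\}$: the factor $p$ in the conformal class can cancel or reinforce the fold degeneracy, which is why a corner-fold is permitted for $g_p$ only when it does not lie on $P$.

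Finally, the \emph{moreover} clause follows once the finite-distance folds are understood. A fold that is infinitely distant is simply not part of $\mathcal A_C^{g}$, so no extension is needed there; a fold at finite distance, by the very definition of positive/negative fold, is a folding hypersurface across which one of the K\"ahler forms $\omega_\pm$ degenerates from being non-degenerate, so the integrable complex structure and its K\"ahler form cannot extend as a genuine (non-folded) K\"ahler structure. Thus the full ambitoric structure extends to $\mathcal A_C^{g}$ if and only if every fold has been removed to infinity, which is precisely the stated condition.
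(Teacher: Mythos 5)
The central gap is in your treatment of proper folds, which you yourself identify as the crux. Your argument --- that a proper fold sitting at finite $g$-distance would make the completion fail to be an orbifold, ``hence the requirement of no proper folds'' --- at best establishes that every proper fold is infinitely distant. That is strictly weaker than the theorem's first bullet, which asserts that $\mathcal A$ has \emph{no} proper folds at all. To bridge this you need the complementary fact (the paper's lemma \ref{lemProperFoldDist}): for every $g\in\{g_0,g_\pm\}\cup\{g_p\}_{p\perp q}$, a proper fold is \emph{never} infinitely distant. The paper gets this from the asymptotic length criterion of lemma \ref{lemAsyLength}: taking $\phi=q(x,y)$ (or $x-y$) as a defining function of the fold, one computes $\|d\phi\|_g\asymp\phi^r$ with $r\in\{0,\pm\tfrac12\}$, and since $r<1$ every curve reaching the fold has finite $g$-length. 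This also shows that your claim that ``exactly one of $g_+,g_-$ sends the fold to infinite distance while the other keeps it finite'' is false for proper folds: neither does. The genuine asymmetry between $g_+$ and $g_-$ at folds comes not from distance but from well-definedness of the $\mathbb T$-fibre volume on the completion (the table $\tfrac{A(x)B(y)}{q(x,y)^4}$, $\tfrac{A(x)B(y)}{(x-y)^2q(x,y)^2}$, $\tfrac{A(x)B(y)}{(x-y)^4}$, $\tfrac{A(x)B(y)}{p(x,y)^4}$), which is how lemmas \ref{lemFoldComp}, \ref{lemEdgeFoldComp} and \ref{lemCornerFold} select the admissible metrics.

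Relatedly, your proposed mechanism for the orbifold failure at a finite-distance proper fold (``cusp or cone with continuously-varying cone angle'') is unsubstantiated and is not what actually goes wrong. In the paper's lemma \ref{lemFoldComp}, the conformal factor relating $g$ to $g_0$ vanishes along the fold, so the $g$-distance between any two $\mathbb T$-orbits in the fold is zero and the whole fold collapses to a single orbit in $\mathcal A_C^g$; constancy of the fibre volume along the fold then forces identities like $q(x,y)^4\propto A(x)B(y)$, impossible outside the parabolic case where the fold is an edge, and the residual case $g=g_p$ with $p$ constant is killed by a curvature blow-up estimate $\|\Rm_{g_p}\|_{g_p}\gtrsim\frac{|s_-|}{(x-y)^2}$. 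Without these ingredients, or a genuine substitute, your fold analysis does not go through. Two smaller gaps: you give no argument for the part of the fourth bullet saying a corner adjacent to an infinitely distant edge or fold must itself be infinitely distant (the paper's lemma \ref{lemCornerInfinite} uses a topological obstruction: a neighbourhood of the form $\big((\mathbb C\backslash\{0\})\times(\mathbb C\backslash\{0\})\big)\cup C$ is never an orbifold); and in the ``moreover'' clause your converse direction only observes that infinitely distant folds are absent from $\mathcal A_C^g$, whereas one must actually show the structure extends, which the paper does by noting that $q(x,y)$ and $(x-y)$ then extend as positive functions, giving smooth extensions of $g_\pm$ and $\omega_\pm$.
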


To classify regular ambitoric orbifold completions, we first extend the local classification on the set of free torus orbits to a local classification on the Cauchy completions. This allows us to describe the boundary of the set of free orbits in terms of folds/edges/corners as was done in the case of ambitoric ansatz spaces. We find that in the case where there are no proper folds, we can apply a slight modification of the Lokal-global-Prinzip for convexity theorems \cite{hilgert1994coadjoint}, which is a tool that can be used to prove that the moment map of a convex toric orbifold is a convex polytope. This gives us the following partial classification of regular ambitoric orbifold completions:

\begin{theorem}\label{thmCompletionClassification}
	 Let $(M,\mathring M,g,J_+,J_-,\mathbb T)$ be a connected regular ambitoric orbifold completion, where $g\in\{g_0,g_\pm\}\cup\{g_p\}_{p\perp q}$. Then there exists an ambitoric embedding of $(\mathring M,[g],J_+,J_-,\mathbb T)$ into some ambitoric ansatz space which is completable with respect to the metric induced by $g$ and whose moment map images are polygons.
\end{theorem}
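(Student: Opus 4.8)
The plan is to upgrade the local classification of \cite{apostolov2013ambitoric} on the free locus $\mathring M$ to a global embedding, then to control the boundary $M\setminus\mathring M$ using the completion hypothesis, and finally to invoke a convexity principle to identify the moment map images as polygons.

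First I would work on $\mathring M$, where every point lies on a free torus orbit and the local classification provides, in a neighbourhood of each orbit, ambitoric coordinates $(x,y,\vec t)$ together with the defining data $q,A,B$. The first task is to show that these local models are compatible: the functions $x$ and $y$ are canonically determined (up to the residual discrete and affine ambiguities in the normalisation) by the pair of K\"ahler structures and the torus action, so on the connected set $\mathring M$ they glue into globally defined functions, and the invariants $q,A,B$ propagate by continuity. This produces an ambitoric embedding $\mathring M\hookrightarrow\mathcal A(q,A,B,\mathbb T)$ into a single ansatz space, intertwining $g$ with the corresponding metric on $\mathcal A$. Since $\mathring M$ is completable with orbifold Cauchy completion $M$, I would then verify that $\mathcal A$ itself satisfies the completability criteria of Theorem \ref{thmClassCompletable}, using the orbifold charts of $M$ to control the boundary strata.

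The next step is to extend the local classification across the boundary of $\mathring M$. Using the orbifold charts on $M$, each boundary component is described as a fold, edge, or corner according to the behaviour of the moment map, exactly as in the analysis of ambitoric ansatz spaces. The crucial point is that $M$ being a genuine orbifold forces the extended structure to remain nondegenerate along these strata in the orbifold sense, which rules out proper folds: a proper positive or negative fold would be a folding hypersurface of the extended K\"ahler form, incompatible with $M$ being smooth as an orbifold. This reduces us to the regime in which the only folds are edges or corners, which is precisely the situation covered by the completability conditions of Theorem \ref{thmClassCompletable}.

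With no proper folds, the moment map of each K\"ahler structure behaves like that of an honest toric orbifold, and the final step is to show its image is a polygon. Here I would apply a suitable modification of the Lokal-global-Prinzip of \cite{hilgert1994coadjoint}: local convexity of the moment map image holds away from and across the edges and corners once these are known not to be convexity-obstructing folding hypersurfaces, and the connectedness of $M$ together with properness of the quotient map upgrades local convexity to global convexity of a closed polygonal region. I expect the main obstacle to lie in this last step, namely verifying the hypotheses of the Lokal-global-Prinzip in the orbifold-with-boundary setting---in particular, checking that the moment map fibres are connected and that local convexity genuinely holds at the edge and corner strata after proper folds have been excluded. Matching the resulting polygon to the boundary strata classified above then yields the desired embedding into a completable ansatz space with polygonal moment map images.
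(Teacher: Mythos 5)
There is a genuine gap, and it sits at your very first step, which inverts the logical order the argument requires. You claim the local models on $\mathring M$ glue into a single ambitoric embedding $\mathring M\hookrightarrow\mathcal A(q,A,B,\mathbb T)$ because the coordinates are ``canonically determined up to residual discrete and affine ambiguities'' and $\mathring M$ is connected. In fact the gauge freedom between two overlapping charts is a full $\mathbb P\SL_2\mathbb R$ of fractional-linear transformations (section \ref{secGauge}), so the charts only define a \v{C}ech cocycle in $H^1(\mathring M/\mathbb T,\mathbb P\SL_2\mathbb R)$, and trivializing it requires $\mathring M/\mathbb T$ to be contractible (lemma \ref{lemCechEmbedding}); even then the glued map is merely a local embedding, and its injectivity is a further issue. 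The example following lemma \ref{lemLocAmbiBdry} (the pull-back of an ansatz space over the universal cover of the exterior of an ellipse) is a connected regular ambitoric $4$-manifold with free torus action that admits no such embedding and whose orbital moment map is not injective, so connectedness alone can never deliver your first step---and your argument uses nothing else at that stage, since the completion hypothesis only enters later. The paper's proof runs in the opposite direction: the completion hypothesis first rules out proper folds (theorem \ref{thmClassCompletable}, applied through the local charts of lemma \ref{lemLocAmbiBdry}, together with lemmas \ref{lemFoldComp} and \ref{lemProperFoldDist}); then the modified Lokal-global-Prinzip gives injectivity of the orbital moment map $\bar\mu^+$ and convexity of its image; this yields contractibility of $\mathring M/\mathbb T$; and only then can lemma \ref{lemCechEmbedding} be invoked to build the global embedding, whose injectivity and whose connected intersections with level sets of $x$ and $y$ are inherited from the moment map. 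The convexity machinery you defer to the end is the prerequisite for the embedding, not a consequence of it.

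A second, related gap is in the convexity step itself: you appeal to ``properness of the quotient map'' to upgrade local convexity to global convexity, but properness of the moment map is precisely what is unavailable here, since $\mathring M$ and $M$ need not be compact. This is the whole reason the paper proves corollary \ref{corLGPCompleteCoverish}: properness in the Lokal-global-Prinzip of \cite{hilgert1994coadjoint} is replaced by completeness, using that $\bar g_E$ is by construction the pull-back under $\bar\mu^+$ of a Euclidean metric on $\mathfrak t^*$, so that $(\mathring M/\mathbb T)_C^{\bar g_E}$ is a complete locally compact length space and Hopf--Rinow can stand in for properness. Without this substitution (or an equivalent device), the final step of your proposal does not go through either.
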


  We combine this classification with our classification of completable ambitoric ansatz spaces to obtain an explicit classification of $g_0$-complete regular ambitoric $4$-orbifolds:
\begin{cor}\label{corAmbiClass}
	A regular ambitoric $4$-orbifold which is complete with respect to the barycentric metric $g_0$ is given (uniquely upto gauge transformation) by the data of a symmetric quadratic polynomial $q(x,y)$, a pair of intervals \newline $(x_-,x_+),(y_-,y_+) \subset\mathbb R$ such that $(x-y)q(x,y)$ does not vanish on \newline $(x_-,x_+)\times(y_-,y_+)$, a lattice $\Lambda\subset\mathbb R^2$ and two smooth positive functions \newline $A:(x_-,x_+)\to [0,\infty)$ and $B:(y_-,y_+)\to[0,\infty)$ satisfying for all $\epsilon>0$ small enough:
	\begin{itemize}
		\item If $\int_{x_\pm}^{x_\pm\mp\epsilon}\frac{dx}{\sqrt{A(x)}}$ converges, then $-2\frac{p^{(x\pm)}}{A'(x_\pm)}\in\Lambda,$
		\item If $\int_{y_\pm}^{y_\pm\mp\epsilon}\frac{dy}{\sqrt{B(y)}}$ converges, then $-2\frac{p^{(y\pm)}}{B'(y_\pm)}\in\Lambda,$
		\item For each $\alpha,\beta\in\{\pm\}$, if the integrals from the previous conditions corresponding to $x_\alpha$ and $y_\beta$ are both convergent, then $(x_\alpha-y_\beta)q(x_\alpha,y_\beta)\neq 0$,
	\end{itemize}
\end{cor}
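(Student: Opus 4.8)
The plan is to combine the two main classification theorems of the paper, specialize everything to the barycentric metric $g_0$, and then read off the abstract completability conditions of Theorem \ref{thmClassCompletable} as the three explicit analytic conditions in the statement. I would organize the argument as a forward (classification) direction, a converse (synthesis) direction with uniqueness, and a careful translation of the intrinsic fold/edge/corner language into the explicit data in between.

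First I would run the classification direction. Given a connected $g_0$-complete regular ambitoric $4$-orbifold $M$ with dense free-orbit locus $\mathring M$, I would invoke Theorem \ref{thmCompletionClassification} to obtain an ambitoric embedding of $\mathring M$ into an ambitoric ansatz space $\mathcal A(q,A,B,\mathbb T)$ (carrying the structure of Theorem \ref{thmAmbitoric}) which is completable with respect to the induced metric and whose moment images are polygons. This directly produces the data: the symmetric quadratic $q$, the one-variable functions $A,B$, and the torus $\mathbb T=\mathbb R^2/\Lambda$, hence the lattice $\Lambda$. The intervals $(x_-,x_+)$ and $(y_-,y_+)$ are recovered as the $x$- and $y$-ranges cut out by the polygonal moment image, and the non-vanishing of $(x-y)q(x,y)$ on the open rectangle is exactly the assertion that $\mathcal A$ has no proper folds in its interior. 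Since $M$ is $g_0$-complete and $\mathring M$ is dense, $M$ is identified with the $g_0$-Cauchy completion $\mathcal A_C^{g_0}$.

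Next I would translate the completability criteria of Theorem \ref{thmClassCompletable} for $g=g_0$. The key simplification is that $g_0\notin\{g_-,g_p\}$ and $g_0\notin\{g_+,g_p\}$, so the clauses permitting finite-distance fold edges and fold corners are vacuous. What survives is: every edge is either infinitely distant or has a \emph{compatible normal}, and every finite-distance corner must sit at the meeting of two finite-distance edges and must not be a fold. An edge at $x=x_\pm$ lies at finite distance precisely when $\int_{x_\pm}^{x_\pm\mp\epsilon}\frac{dx}{\sqrt{A(x)}}$ converges, and for such a finite-distance edge the compatible-normal (orbifold, Delzant-type) condition is the lattice membership $-2\frac{p^{(x\pm)}}{A'(x_\pm)}\in\Lambda$, and symmetrically in $y$; this yields the first two bullets. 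A corner at $(x_\alpha,y_\beta)$ lies at finite distance exactly when both corresponding integrals converge, and then the only obstruction is its being a fold, which is precisely the vanishing $(x_\alpha-y_\beta)q(x_\alpha,y_\beta)=0$; the non-fold requirement is thus the third bullet. For the converse direction I would build $\mathcal A(q,A,B,\mathbb T)$ from data satisfying the three conditions, verify $g_0$-completability by matching the $g_0$-specialization of Theorem \ref{thmClassCompletable} term by term, and invoke its \emph{moreover} clause: since every finite-distance boundary component is an edge or a non-fold corner, every fold is infinitely distant, so the ambitoric structure extends to $\mathcal A_C^{g_0}$, producing a genuine $g_0$-complete regular ambitoric $4$-orbifold. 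Uniqueness up to gauge then follows from the essential uniqueness of the ambitoric ansatz in the local classification \cite{apostolov2013ambitoric} together with the uniqueness of the Cauchy completion.

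The main obstacle I expect is the precise dictionary in the middle step: matching the intrinsic notions of \emph{compatible normal}, \emph{infinitely distant}, and \emph{positive/negative fold} corner against the explicit analytic data. In particular, deriving the exact normal vectors $p^{(x\pm)},p^{(y\pm)}$ and showing that orbifold smoothness at a finite-distance edge is equivalent to $-2\frac{p^{(x\pm)}}{A'(x_\pm)}\in\Lambda$ requires analyzing the degeneration of the $g_0$-metric and of the moment map as $A(x)\to 0$ (respectively $B(y)\to 0$); and confirming that for $g_0$ a finite-distance corner can never be a fold needs the positive/negative fold distinction applied specifically to the barycentric metric. Once this correspondence is pinned down, the three bullets are an immediate specialization of Theorem \ref{thmClassCompletable}.
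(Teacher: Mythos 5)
Your proposal follows essentially the same route as the paper's proof: apply Theorem \ref{thmCompletionClassification} to embed the free-orbit set into a completable ambitoric ansatz space (which is box-type since for $g_0$ there are no proper folds and no $P$ component), then use Theorem \ref{thmClassCompletable} together with Lemma \ref{lemEdgeDist} to translate the $g_0$-specialized edge/corner conditions into the three bullets, and run the converse by constructing $\mathcal A(q,A,B,\mathbb R^2/\Lambda)$ and citing Theorem \ref{thmClassCompletable} again. The only differences are cosmetic: you make explicit the uniqueness-up-to-gauge claim and the appeal to the \emph{moreover} clause for extending the ambitoric structure, both of which the paper's proof leaves implicit.
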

where $p^{(\gamma)}\in\mathfrak t$ is a convenient choice of normal to the momentum map image of the level sets $\{x=\gamma\}$ and $\{y=\gamma\}$. See definition \ref{defCompNormal}. Note that similar results can be obtained for $g_\pm$, but the third condition which tests the corners would be more complicated.

As an application for our classification results, we consider compact Riemannian $4$-manifolds which admit $*$-Killing $2$-forms. These are Riemannian signature analogues of the Killing-Yano tensor in Lorentzian signature, which describes the so-called hidden symmetries of the Kerr metric. We build on the work of \cite{gauduchon2015killing}, who divide Riemannian $4$-manifolds admitting non-parallel $*$-Killing $2$-forms into $3$ types. They show that one of these types is consists of regular ambitoric orbifold completions, identifying the metric with what we've been calling $g_p$ with $p=1$. We classify compact $4$-manifolds of this type:

\begin{theorem}\label{thmIntroKilling}
	Let $(M,g)$ be a compact connected oriented regular ambitoric orbifold completion which is a manifold admitting a non-parallel $*$-Killing $2$-form. Then $M$ is diffeomorphic to either $\mathbb S^4, \mathbb{CP}^2$, or a Hirzebruch surface. Conversely, each of these manifolds admit a metric with a non-parallel $*$-Killing $2$-form.
\end{theorem}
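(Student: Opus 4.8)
The plan is to combine the already-established classification machinery (Corollary \ref{corAmbiClass} and Theorem \ref{thmCompletionClassification}) with the identification, due to \cite{gauduchon2015killing}, of one of their three types of $*$-Killing $2$-form manifolds as a regular ambitoric orbifold completion with $g=g_p$ where $p=1$. The forward direction then reduces to a compactness analysis: given such a compact $M$, I would first invoke Theorem \ref{thmCompletionClassification} to embed $\mathring M$ into a completable ambitoric ansatz space $\mathcal A(q,A,B,\mathbb T)$ whose moment map images are polygons, with the metric being $g_p$, $p=1$. Compactness forces every boundary component (fold/edge/corner) of the polygon to lie in the Cauchy completion, so by Theorem \ref{thmClassCompletable} there are no proper folds and each edge and corner must be at finite distance with a compatible normal. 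The task is then to determine which moment polygons — together with admissible choices of the quadratic $q$, the lattice $\Lambda$, and the boundary data $A,B$ — can arise, and to identify the resulting smooth closed $4$-manifolds.

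First I would translate the completeness and manifold (as opposed to orbifold) hypotheses into combinatorial constraints on the polygon via the conditions in Corollary \ref{corAmbiClass}: the finite-distance edges give convergence of the relevant integrals $\int_{x_\pm}^{x_\pm\mp\epsilon} dx/\sqrt{A(x)}$ (and the $B$-analogue), which in turn impose the lattice conditions $-2p^{(x\pm)}/A'(x_\pm)\in\Lambda$ and the corner nondegeneracy $(x_\alpha-y_\beta)q(x_\alpha,y_\beta)\neq 0$. Smoothness (no orbifold points) sharpens the lattice membership to the requirement that the inward normals at each vertex form a lattice basis — the Delzant condition. Since the polygon is cut out by the coordinate lines $x=x_\pm$ and $y=y_\pm$ (together with the constraints $A,B>0$ and $(x-y)q(x,y)\neq0$), the finite-distance part is combinatorially a quadrilateral, or degenerates to a triangle or to the whole sphere when some edges or pairs of edges are infinitely distant. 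I would then enumerate these cases: a Delzant quadrilateral with compatible edge normals yields a Hirzebruch surface, the triangle yields $\mathbb{CP}^2$, and the fully degenerate case yields $\mathbb{S}^4$. Establishing that exactly these diffeomorphism types occur — and no others — is the heart of the argument, and I expect it to rest on the Delzant-type correspondence between smooth toric symplectic (here folded-symplectic, but with no proper folds so genuinely symplectic on the finite part) $4$-manifolds and their moment polygons.

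For the converse, I would exhibit on each of $\mathbb{S}^4$, $\mathbb{CP}^2$, and the Hirzebruch surfaces an explicit regular ambitoric structure of type $g_p$, $p=1$, realizing the corresponding polygon from the enumeration, and then verify that the associated extended positive or negative Kähler form (equivalently, the $*$-Killing $2$-form built from the ambitoric data as in \cite{gauduchon2015killing}) is non-parallel. The most delicate point of the whole theorem is the forward enumeration: I must ensure that the abstract boundary components classified in Theorem \ref{thmClassCompletable} genuinely close up to give a \emph{compact} and \emph{smooth} total space rather than merely a complete orbifold, which requires carefully matching the infinitely-distant-versus-finite dichotomy for edges and corners against the lattice/Delzant conditions and checking that the nonlocal-convexity pathology near folds (Example 3.11 of \cite{da2011symplectic}) is excluded precisely because compactness forbids proper folds. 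I anticipate that ruling out spurious combinatorial configurations — e.g.\ polygons that satisfy the edge conditions but fail the corner condition $(x_\alpha-y_\beta)q(x_\alpha,y_\beta)\neq0$, or that would force an orbifold singularity — will be the main technical obstacle, and I would handle it by systematically checking each vertex of the candidate polygon against the three bulleted conditions of Corollary \ref{corAmbiClass}.
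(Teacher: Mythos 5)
Your scaffolding for the forward direction (Theorem \ref{thmCompletionClassification} plus the Gauduchon--Moroianu identification of the metric as $g_p$ with $p=1$) and your plan for the converse (explicit realizations plus the converse of Proposition \ref{propGMAmbi}) match the paper. However, the forward enumeration---which you correctly identify as the heart of the theorem---contains two genuine gaps. First, your mechanism for producing the triangle and the $\mathbb S^4$ cases is wrong: you attribute these degenerations to edges being infinitely distant, yet compactness of $M$ forces every boundary component to lie at finite distance, so under your own hypotheses that mechanism can never occur and your enumeration would only ever produce quadrilaterals. The actual source of the degenerations is fold \emph{corners}: corners of the box where $(x_\alpha-y_\beta)q(x_\alpha,y_\beta)=0$, which Theorem \ref{thmClassCompletable} permits at finite distance precisely because $g=g_p$. (Here you conflate the $g_p$ case with the third bullet of Corollary \ref{corAmbiClass}, which is a statement about $g_0$ and would wrongly forbid such corners.) At a fold corner the two adjacent box edges are mapped to one and the same tangent line of the conic, so the moment image loses a vertex: no fold corners gives a quadrilateral, one gives a triangle, two give a $2$-gon.

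Second, and relatedly, the key tool you propose---the Delzant correspondence for symplectic toric $4$-manifolds---fails in exactly those degenerate cases. Your parenthetical ``no proper folds so genuinely symplectic on the finite part'' is false: absence of proper folds does not exclude fold corners, and at a fold corner both $\omega_\pm$ degenerate, so $(M,\omega_\pm)$ is only folded symplectic there. Indeed $\mathbb S^4$ admits no symplectic form at all (since $H^2(\mathbb S^4)=0$), so no Delzant-type correspondence can output it; this is the same phenomenon as in Example 3.11 of \cite{da2011symplectic}, not something it excludes. The paper sidesteps this by replacing Delzant with the purely equivariant-topological classification of $T^2$-actions on oriented $4$-manifolds of Orlik--Raymond \cite{orlik1970actions}: box-type with at most $4$ fixed points gives $M$ diffeomorphic to one of $\mathbb S^4$, $\mathbb{CP}^2$, $\mathbb{CP}^2\#\mathbb{CP}^2$, $\mathbb S^2\times\mathbb S^2$, $\mathbb{CP}^2\#\overline{\mathbb{CP}^2}$, and then $\mathbb{CP}^2\#\mathbb{CP}^2$ is excluded by a local-convexity argument on the moment image (choosing the isotropy generators to be inward normals forces all Orlik--Raymond signs $\epsilon_i=1$, whereas $\mathbb{CP}^2\#\mathbb{CP}^2$ requires $\epsilon_1\epsilon_4=-\epsilon_2\epsilon_3$). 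Your proposal contains no substitute for this exclusion step in the presence of fold corners, so as written the argument cannot be completed.
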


\subsection{Ambitoric structures and ambitoric ansatz spaces}

In this section, we introduce the basic definitions of ambitoric manifolds, as well as the local classification in four dimensions from \cite{apostolov2013ambitoric}.

\begin{defn}
 A \emph{Hermitian} manifold $(M,g,J)$ is a manifold $M$ equipped with a metric $g$ and a compatible complex structure $J$. ie. $g(J\cdot,J\cdot)=g(\cdot,\cdot)$.
 An \emph{ambihermitian} manifold $(M,g,J_+,J_-)$ is a manifold $M$ equipped with a metric $g$ and complex structures $J_\pm$ such that $(M,g,J_\pm)$ are both Hermitian, and the orientations induced by $J_\pm$ are opposite. If the conformal class $[g]$ of an ambihermitian manifold $(M,g,J_+,J-)$ admits metrics $g_\pm\in[g]$ such that $(M,g_\pm,J_\pm)$ are both K\"ahler manifolds, then $(M,[g],J_+,J_-)$ is \emph{ambik\"ahler}. If there is a $2$-torus $\mathbb T$ which acts on an ambik\"ahler manifold $(M,[g],J_+,J_-)$ by hamiltonial isometries, then $(M,[g],J_+,J_-,\mathbb T)$ is an \emph{ambitoric} manifold. Note that all of these definitions extend naturally to the case of orbifolds.
\end{defn}

We denote the Lie algebra of $\mathbb T$ by $\mathfrak t$. The kernel of the exponential map on $\mathfrak t$ is the lattice of circle subgroup $\Lambda\subset\mathfrak t$, so that $\mathbb T \cong \mathfrak t/\Lambda$.

For an ambitoric manifold, $(M,[g],J_+,J_-,\mathbb T)$, we denote by $\omega_\pm:=g_\pm(J_\pm\cdot,\cdot)$ the K\"ahler forms of $(M,g_\pm,J_\pm)$.
Since $g_\pm$ are in the same conformal class, there exists a positive function $f$ such that $g_-=f^2 g_+$.

\begin{defn}
 The \emph{barycentric metric} on an ambiK\"ahler manifold is given by $g_0:=f g_+=f^{-1}g_-$.
\end{defn}

Note that the metrics $g_\pm$ are uniquely chosen within their conformal class up to homothety, so that $f$ is well-defined up to a multiplicative constant, and $g_0$ is well-defined up to homothety.

Since each $K\in\mathfrak t$ is Hamiltonian with respect to $\omega_\pm$, there exist functions $f_K^\pm\in\mathcal C^\infty(M)$ such that $K\lrcorner\omega_\pm=-df_K^\pm$. The map $K\mapsto f_K^\pm$ is linear, so it gives an element of $\mathfrak t^*\otimes \mathcal C^\infty(M)$. This gives a smooth map $\mu^\pm:M\to\mathfrak t^*$, which is the \emph{moment map} for the toric manifold $(M,\omega_\pm,\mathbb T)$.

Ambitoric $4$-manifolds come in three families \cite{apostolov2013ambitoric}, which we will describe in the rest of the section.

\begin{ex}
 Let $(\Sigma_1,g_1,J_1)$ and $(\Sigma_2,g_2,J_2)$ be two (K\"ahler) Riemann surfaces with non-vanishing Hamiltonian Killing vector fields $K_1$ and $K_2$ respectively. Then $\Sigma_1\times\Sigma_2$ can be given ambitoric structure
 \begin{align*}
  g &:= g_1 \oplus g_2, \\
  J_\pm &:= J_1 \oplus\pm J_2,\\
  \mathfrak t &:=\Span\{K_1,K_2\}.
 \end{align*}
\end{ex}

\begin{ex}
 Let $(M,g,J)$ be a K\"ahler surface. For any non-vanishing hamiltonian Killing vector field $K$, we can define an almost complex structure $J_-$ by
 \begin{equation*}
  J_-:=\left\{\begin{array}{ll}
  J &  \text{on }\Span\{K,JK\}\\
  -J & \text{on }\Span\{K,JK\}^\perp
  \end{array}\right.
 \end{equation*}
 $J_-$ has opposite orientation to $J$. If $(M,g,J)$ is conformally K\"ahler, then $(M,[g],J_+:=J,J_-)$ is ambik\"ahler. Such an ambik\"ahler manifold is said to be of \emph{Calabi type}. In \cite{apostolov2003geometry}, it is shown that the K\"ahler quotient of $(M,g,J)$ using the momentum map $z$ of $K$ is a Riemann surface $(\Sigma,(az-b)g_\Sigma,J_\Sigma)$, where $a,b$ are constants, $g_\Sigma$ and $J_\Sigma$ are a metric and complex structure respectively on $\Sigma$. An ambik\"ahler surface of Calabi type $(M,[g],J,J_-)$ is ambitoric if and only if $(\Sigma,(az-b)g_\Sigma,J_\Sigma)$ admits a Hamiltonian Killing vector field.
\end{ex}

For an ambitoric $4$-manifold $(M,[g],J_+,J_-,\mathbb T)$, we define $\mathfrak t_M$ to be the subset $TM$ spanned by the Killing vector fields $\mathfrak t$. On an open dense subset $\mathring M\subseteq M$, $\mathfrak t_{\mathring M}:=\mathfrak t_M|_{\mathring M}$ is a two dimensional distribution in $T\mathring M$. Since the Killing vector fields $\mathfrak t$ are $\omega_\pm$-Hamiltonian, $\mathfrak t_{\mathring M}$ is $\omega_\pm$-Lagrangian, so that $J_+\mathfrak t_{\mathring M}=J_-\mathfrak t_{\mathring M}=\mathfrak t_{\mathring M}^\perp$. %In particular, $\mathfrak t_{\mathring M}$ is not $J_\pm$ invariant.

Since $J_+$ and $J_-$ have opposite orientations, they commute. This implies that the endomorphism $-J_+J_-$ of $TM$ is an involution. Thus $TM$ decomposes into $\pm 1$ eigenbundles of $-J_+J_-$. Let $\xi_{\mathring M}$ and $\eta_{\mathring M}$ be the intersections of $\mathfrak t_{\mathring M}$ with the $+1$ and $-1$ (respectively) eigenbundles of $-J_+J_-$. Since the eigenbundles are $J_\pm$ invariant, while $\mathfrak t_{\mathring M}$ is not, $\xi_{\mathring M}$ and $\eta_{\mathring M}$ must be line bundles.

Let $\boldsymbol K: \mathfrak t\to\Gamma(\mathfrak t_M)$ be the function which maps a vector $X\in\mathfrak t$ to the associated Killing vector field on $M$. Let $\boldsymbol\theta\in\Omega^1(M,\mathfrak t)$ be the $\mathfrak t$-valued one-form defined by the relations $\boldsymbol{\theta}\circ \boldsymbol K = \Id_{\mathfrak t}$ and $\boldsymbol{\theta}|_{\mathfrak t_{\mathring M}^\perp}=0$. Let
\begin{align*}
 \xi:\mathring M\to\mathbb P\mathfrak t:
 p\mapsto\boldsymbol{\theta}((\xi_{\mathring M})_p),\\
 \eta:\mathring M\to\mathbb P\mathfrak t:
  p\mapsto\boldsymbol{\theta}((\eta_{\mathring M})_p).
\end{align*}

If $\xi$ or $\eta$ is constant, then its image is the span of some Killing vector field $K$ which realizes the ambitoric structure as being Calabi type. If neither is constant, then $d\xi\wedge d\eta$ is non-vanishing \cite{apostolov2013ambitoric}.

\begin{defn}\label{defRegAmbi}
 An ambitoric $4$-manifold $(M,[g],J_+,J_-,\mathbb T)$ is \emph{regular} if $d\xi\wedge d\eta$ is non-vanishing on an open dense set.
\end{defn}
It is clear that every ambitoric structure is either regular or of Calabi type. We will find it convenient to fix some notation for quadratic polynomials:

\begin{defn}
	Let $p(z)=p_0\, z^2+2p_1\,z+p_2$ be a quadratic polynomial. We call $p(x,y)=p_0\,xy+p_1\,(x+y)+p_2$ the \emph{polarization} of $p(z)$. We define a $(2,1)$-signature inner product on the space of quadratic polynomials by $$<q,p>:=2q_1p_1-q_2p_0-q_0p_2.$$
\end{defn}

We will use extensively this local classification of regular ambitoric surfaces:

\begin{theorem}[Theorem $3$ from \cite{apostolov2013ambitoric}]\label{thmAmbitoric}
 Let $(M,[g_0],J_+,J_-,\mathbb T)$ be a regular ambitoric $4$-manifold with barycentric metric $g_0$ and K\"ahler metrics $(g_+,\omega_+)$ and $(g_-,\omega_-)$. Then, about any point in an open dense subset of $M$, there are $\mathfrak t$-invariant functions $x,y$, a quadratic polynomial $q(z)$, a $\Lambda^1(M)$-valued quadratic polynomial $d\tau(z)$ orthogonal to $q(z)$, and functions $A(z)$ and $B(z)$ of one variable with respect to which:
 \begin{align}\label{eqnAmbiStructure}
  g_0 =& \frac{dx^2}{A(x)}+\frac{dy^2}{B(y)}
   + A(x)\left(\frac{d\tau(y)}{(x-y)q(x,y)}\right)^2
    + B(y)\left(\frac{d\tau(x)}{(x-y)q(x,y)}\right)^2
    ,\nonumber\\
  \omega_+ =& \frac{dx\wedge d\tau(y)+dy\wedge d\tau(x)}{q(x,y)^2},\qquad
  \omega_= \frac{dx\wedge d\tau(y)-dy\wedge d\tau(x)}{(x-y)^2}.
 \end{align}

 Conversely, for any data as above, the above metric and K\"ahler forms do define an ambitoric K\"ahler structure on any simply connected open set where $\omega_\pm$ are nondegenerate and $g_0$ is positive definite.
\end{theorem}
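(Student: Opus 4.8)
The plan is to handle the two directions separately: the forward direction (existence of the normal form) rests on manufacturing adapted coordinates out of the geometric data $\xi,\eta$ already introduced, while the converse is a direct verification that the displayed formulas satisfy the K\"ahler and integrability axioms.

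For the forward direction I would first exploit regularity. On the open dense set where $d\xi\wedge d\eta\neq 0$, the $\mathbb T$-invariant maps $\xi,\eta\colon\mathring M\to\mathbb P\mathfrak t$ descend to the local quotient $\mathring M/\mathbb T$ and, after fixing an affine chart of $\mathbb P\mathfrak t$ and a basis $e_1,e_2$ of $\mathfrak t$, I would define $\mathbb T$-invariant functions $x,y$ by writing $\xi=[e_1+x\,e_2]$ and $\eta=[e_1+y\,e_2]$. Since $\xi_{\mathring M}$ and $\eta_{\mathring M}$ are the intersections of $\mathfrak t_{\mathring M}$ with the two eigenbundles of $-J_+J_-$, which are $g_0$-orthogonal because $J_\pm$ are isometries, the generating Killing fields $\boldsymbol K(e_1+x\,e_2)$ and $\boldsymbol K(e_1+y\,e_2)$ are orthogonal; correspondingly $dx$ and $dy$ are $g_0$-orthogonal, so the base part of the metric carries no $dx\,dy$ cross term. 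The $\Lambda^1(M)$-valued quadratic $d\tau(z)$ is then built so that $d\tau(x)$ and $d\tau(y)$ are the components of $\boldsymbol\theta$ dual to these two torus directions, the quadratic (rather than linear) dependence on $z$ being dictated by the $\mathrm{GL}(2,\mathbb R)$-covariance of the projectivization $\mathbb P\mathfrak t$.

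The heart of the argument is the \emph{separation of variables}. I would write the invariant metric in the block form $g_{xx}\,dx^2+g_{yy}\,dy^2+(\text{fiber part on }\mathfrak t_{\mathring M})$ and impose that both $(g_+,J_+)$ and $(g_-,J_-)$ are K\"ahler. Because every coefficient is $\mathbb T$-invariant, the closedness conditions $d\omega_\pm=0$ together with the integrability of $J_\pm$ reduce to a finite overdetermined system of first-order PDEs in the two variables $x,y$; the requirement that it hold for both structures simultaneously is what forces the coefficients to decouple into functions of a single variable. Tracking these equations in the $\SL(2,\mathbb R)$-covariant language of binary quadratics, using the polarization and the discriminant pairing $\langle\cdot,\cdot\rangle$, I expect the shared conformal data to organize into a single quadratic $q(z)$, with the decoupled pieces emerging as the one-variable functions $A(x),B(y)$ and the constraint $\langle d\tau,q\rangle=0$ appearing as the integrability obstruction. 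After renormalizing $x,y$ to absorb reparametrizations into $A,B$, reading off $\omega_\pm=g_\pm(J_\pm\cdot,\cdot)$ should produce the stated forms, the denominator $(x-y)$ reflecting the locus $\xi=\eta$ and $q(x,y)$ encoding the conformal factor $f$ relating $g_+$ to $g_-$.

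For the converse I would argue by direct computation: given $q$, $d\tau$, $A$, and $B$, define $g_0$ and $\omega_\pm$ by the displayed formulas, check that $g_0$ is positive definite and $\omega_\pm$ nondegenerate exactly where $A,B>0$ and $q(x,y)(x-y)\neq 0$, verify $d\omega_\pm=0$ using that $d\tau(z)$ is exact and $\langle d\tau,q\rangle=0$, confirm that the almost complex structures determined by $(g_\pm,\omega_\pm)$ are integrable, and note that the torus translating the $\tau$-fibers acts by Hamiltonian isometries. I expect the main obstacle to be the forward separation-of-variables step: showing that simultaneous compatibility with the two K\"ahler structures is rigid enough to collapse a priori two-variable coefficients into single-variable functions, and recognizing the binary-quadratic formalism of \cite{apostolov2013ambitoric} that makes this collapse transparent. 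The converse, by contrast, should reduce to bookkeeping, the only delicate points being the closedness of $\omega_\pm$ and the Newlander--Nirenberg check, both of which hinge on the orthogonality $\langle d\tau,q\rangle=0$.
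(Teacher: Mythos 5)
This theorem is not proved in the paper you are being compared against: it is quoted verbatim as Theorem 3 of \cite{apostolov2013ambitoric}, so the only proof to measure your attempt against is the one in that reference. Your outline does follow the same broad strategy as Apostolov--Calderbank--Gauduchon: coordinates $x,y$ extracted from affine values of $\xi,\eta$, the orthogonal splitting of $T\mathring M$ induced by the involution $-J_+J_-$, the $\mathbb P\SL_2\mathbb R$-covariant binary-quadratic formalism, and a converse by direct computation. Your identification of the delicate point in the converse is also correct: a short calculation shows $d\omega_-=0$ automatically, while $d\omega_+=-\tfrac{2(x-y)}{q(x,y)^3}\,dx\wedge dy\wedge\langle q,d\tau\rangle$, so closedness of $\omega_+$ is exactly equivalent to the orthogonality $\langle q,d\tau\rangle=0$.

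However, the forward direction has a genuine gap, and it sits precisely where the mathematical content of the theorem lies. First, your claim that orthogonality of the Killing fields spanning $\xi_{\mathring M}$ and $\eta_{\mathring M}$ gives $dx\perp dy$ ``correspondingly'' is not an implication: one needs $\grad x\in J_\pm\xi_{\mathring M}$ and $\grad y\in J_\pm\eta_{\mathring M}$, which in turn requires showing that the two eigendistributions $\xi_{\mathring M}\oplus J_\pm\xi_{\mathring M}$ and $\eta_{\mathring M}\oplus J_\pm\eta_{\mathring M}$ of $-J_+J_-$ are integrable (a consequence of the integrability of both $J_\pm$) and that $x,y$ are adapted to the resulting pair of transverse foliations; none of this is argued. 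Second, and more seriously, the entire derivation of the normal form --- why the invariant data collapse to a single quadratic $q(z)$, two one-variable functions $A,B$, and a $1$-form-valued quadratic $d\tau(z)$ orthogonal to $q$, with the specific denominators $(x-y)q(x,y)$, $q(x,y)^2$ and $(x-y)^2$ --- is deferred with ``I expect the shared conformal data to organize into\dots''. That separation-of-variables computation \emph{is} the theorem; it occupies the bulk of the proof in \cite{apostolov2013ambitoric}, and nothing in your sketch substitutes for it. So what you have is a correct strategic outline of the cited proof, not a proof.
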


 The open dense subset of $M$ where this theorem applies is the maximal open set $\mathring M$ where the Killing vector fields $\mathfrak t$ have maximal rank. 
 
 The above theorem motivates us to define a family of examples which we will call \emph{ambitoric ansatz spaces}:
 \begin{align*}
 &\mathcal A(q,A,B,\mathbb T):=\\&
 \bigg\{\big(x,y,\vec t\big)\in\mathbb R^2\times \mathbb T:
  A(x)>0,B(y)>0,q(x,y)(x-y)\neq 0\bigg\},
 \end{align*}
which we equip with the ambitoric structure given by (\ref{eqnAmbiStructure}) while identifying $\mathfrak t$ with the infinitesimal vector fields of the action of $\mathbb T$.

\begin{defn}
	A map between ambitoric oribifolds is \emph{ambitoric} if it preserves all of the ambitoric structure. In particular, it is equivariant under the torus action, holomorphic with respect to both complex structures, and preserves the conformal structure.
\end{defn}

We will use these to rephrase the above theorem in the case that the toric structure comes from a Lie group.

\begin{cor}\label{corLocClass}
 Let $(M,[g_0],J_+,J_-,\mathbb T)$ be a regular ambitoric $4$-manifold freely acted on by a $2$-torus $\mathbb T$. Then for any point $p\in M$, there exists a quadratic polynomial $q(z)$, functions $A(z)$ and $B(z)$, a $\mathbb T$-invariant neighbourhood $U$ of $p$, and an ambitoric embedding $\phi:U\hookrightarrow\mathcal A(q,A,B,\mathbb T)$.
\end{cor}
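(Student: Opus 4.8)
The plan is to upgrade the local normal form of Theorem \ref{thmAmbitoric} to a genuine ambitoric embedding by exploiting the freeness of the torus action to manufacture global torus coordinates. First I would observe that when $\mathbb T$ acts freely the Killing fields $\mathfrak t$ have maximal rank everywhere, so $\mathring M = M$ and the conclusion of Theorem \ref{thmAmbitoric} holds on a neighbourhood of the given point $p$. This produces a quadratic polynomial $q(z)$, one-variable functions $A(z), B(z)$, $\mathbb T$-invariant functions $x, y$, and the $\Lambda^1$-valued quadratic polynomial $d\tau(z)$ orthogonal to $q$, with respect to which $g_0, \omega_+, \omega_-$ are expressed by (\ref{eqnAmbiStructure}).

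The next step is to build the map $\phi$. Because the action is free, $\pi : M \to M/\mathbb T$ is a principal $\mathbb T$-bundle, which trivializes $\mathbb T$-equivariantly over a contractible neighbourhood $V$ of $\pi(p)$, giving $U := \pi^{-1}(V) \cong V \times \mathbb T$. The invariant functions $x, y$ descend to $V$, and the normal form (\ref{eqnAmbiStructure}) exhibits $dx, dy$ as part of a coframe, so after shrinking they serve as coordinates on the two-dimensional base $V$. It remains to produce torus coordinates $\vec t \in \mathbb T$ compatible with $d\tau$. The one-forms $d\tau(z)$ are closed, $\mathbb T$-invariant, and restrict to a coframe on each torus fibre (dual, via $\boldsymbol\theta$, to the generators of $\mathfrak t$); integrating them along the fibres using the chosen trivialization defines $\mathbb T$-valued functions $\vec t$, well-defined modulo the lattice $\Lambda$, whose differentials reproduce the torus parts of $d\tau(x), d\tau(y)$. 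One then sets $\phi(m) := (x(m), y(m), \vec t(m))$.

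Finally I would verify that $\phi$ is an ambitoric embedding into $\mathcal A(q, A, B, \mathbb T)$. The image lies in $\mathcal A$ because positive-definiteness of $g_0$ forces $A(x) > 0$ and $B(y) > 0$, while non-degeneracy of $\omega_+$ and $\omega_-$ forces $q(x,y) \neq 0$ and $x \neq y$; on a small enough $U$ the map $\phi$ is a diffeomorphism onto an open subset. Since (\ref{eqnAmbiStructure}) expresses $g_0, \omega_\pm$ --- and hence $[g], J_\pm$ --- entirely through the same data $q, A, B, x, y, d\tau$ that define the structure on $\mathcal A$, the pullback $\phi^*$ of the ansatz structure coincides with the structure on $U$, so $\phi$ preserves the conformal class and both complex structures; equivariance is built into the $\mathbb T$-equivariant trivialization.

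I expect the main obstacle to be the third step of the construction: passing from the local differential-form data $d\tau$ to honest $\mathbb T$-valued angular coordinates that are simultaneously $\mathbb T$-equivariant and matched to the fixed lattice $\Lambda$ of the target. The freeness of the action is precisely what removes the isotropy obstruction and lets the closed forms $d\tau$ be integrated to circle-valued coordinates rather than merely to local $\mathfrak t^*$-valued potentials. Care is also needed to choose the trivialization so that the given $\mathbb T$-action becomes translation in $\vec t$, matching the identification of $\mathfrak t$ with infinitesimal generators used in defining $\mathcal A(q, A, B, \mathbb T)$.
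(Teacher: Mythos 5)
Your proposal is correct and follows essentially the same route as the paper's own (very terse) proof: freeness of the action gives maximal rank of $\mathfrak t$, so Theorem \ref{thmAmbitoric} applies near $p$, and the free $\mathbb T$-action is then used to extend the resulting chart to a $\mathbb T$-invariant neighbourhood identified with a subset of $\mathcal A(q,A,B,\mathbb T)$. Your additional details --- the equivariant trivialization of the principal bundle, the fibre-integration of $d\tau$ to produce $\mathbb T$-valued coordinates, and the check that positivity of $g_0$ and non-degeneracy of $\omega_\pm$ place the image inside $\mathcal A$ --- are exactly what the paper's phrase ``naturally extended to the orbit'' leaves implicit.
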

\begin{proof}
 Since the action of $\mathbb T$ is free, $\mathfrak t$ has maximal rank at each point in $M$. Thus we can apply the previous theorem to find local coordinates on a neighbourhood $U'$ of $p$ which is naturally identified as a subset of some $\mathcal A(q,A,B,\mathbb T)$. Since the ambik\"ahler structure of $M$ is $\mathbb T$-equivariant, these coordinates can naturally be extended to the orbit $U:=\mathbb T\cdot U'$.
\end{proof}

%The theorem can be used to define an ambitoric atlas, given by an open cover $\{U_\alpha\}_\alpha$ of $\mathring M$ with charts $U_\alpha\hookrightarrow\mathcal A_\alpha:=\mathcal A(q_\alpha,A_\alpha,B_\alpha)$ with coordinates $x_\alpha$ and $y_\alpha$. These will be useful in the classification result in section \ref{secFreeExt}.

Next finish this section by reviewing some results about special metrics on regular ambitoric orbifolds:

\begin{theorem}[\cite{apostolov2013ambitoric}]\label{thmDiagRicci}
	Let $([g],J_\pm,\mathfrak t)$ be a regular ambitoric structure. Then for any quadratic polynomial $p(z)$ orthogonal to $q(z)$, the metric $\frac{(x-y)q(x,y)}{p(x,y)^2}g_0$ has diagonal Ricci tensor. Any $\mathfrak t$-invariant metric in $[g]$ with diagonal Ricci tensor arises in this way. Such a metric has constant scalar curvature if and only if
	\begin{align*}
	 A(z)=&p(z)\rho(z)+R(z),\\
	 B(z)=&p(z)\rho(z)-R(z),
	\end{align*}
	where $\rho(z)$ is a quadratic polynomial orthogonal to $p(z)$ and $R(z)$ is a quartic polynomial orthogonal to $q(z)p(z)$ (equivalently $(q,R)^{(2)}\perp p$ or, equally, 
	$(p,R)^{(2)}\perp q$). The metric is Einstein when $\rho(z)$ is a multiple of $q(z)$. Here the transvectant $(p,R)^{(2)}$ is the quadratic polynomial defined by $$(p,R)^{(2)}:=p(z)R''(z)-3p'(z)R'(z)+6p''(z)R(z).$$
\end{theorem}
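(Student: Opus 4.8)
The plan is to work entirely in the separable coordinates provided by Theorem \ref{thmAmbitoric}, in which $g_0$, the K\"ahler forms $\omega_\pm$, and hence the complex structures $J_\pm$, are written explicitly in terms of $x,y,d\tau(x),d\tau(y)$ and the polynomials $A,B,q$. Every $\mathfrak t$-invariant metric in $[g]$ is of the form $\bar g=\Phi\, g_0$ for a positive function $\Phi=\Phi(x,y)$, and $g_0$ is already diagonal in the coframe $(dx,dy,d\tau(x),d\tau(y))$. Reading off $\omega_\pm$ shows that $-J_+J_-$ has the plane $\Span\{\partial_x,\partial_{\tau(y)}\}$ as its $+1$-eigenbundle and $\Span\{\partial_y,\partial_{\tau(x)}\}$ as its $-1$-eigenbundle. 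I would first record the elementary linear-algebra fact that a symmetric $2$-tensor is simultaneously $J_+$- and $J_-$-invariant precisely when it is block-diagonal with respect to this eigenbundle splitting and conformal within each block; equivalently, it has no cross terms in the coframe beyond those present in $g_0$. Thus \emph{diagonal Ricci} becomes exactly the vanishing of the off-diagonal components of $\mathrm{Ric}(\bar g)$.

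The second step is the conformal computation. Writing $\Phi=e^{2\varphi}$ and using the four-dimensional transformation law
\[
 \mathrm{Ric}(\bar g)=\mathrm{Ric}(g_0)-2\big(\nabla^2\varphi-d\varphi\otimes d\varphi\big)-\big(\Delta\varphi+2|\nabla\varphi|^2\big)g_0,
\]
the final term is proportional to $g_0$ and hence automatically diagonal, so it can be dropped. Since $\varphi$ depends only on $x,y$, the one-form $d\varphi$ lies in $\Span\{dx,dy\}$, and the only source of off-diagonal terms is the interaction of $\mathrm{Ric}(g_0)$ with the Hessian $\nabla^2\varphi$ through the Christoffel symbols of $g_0$. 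I would compute these Christoffel symbols once; the separable structure of $g_0$ forces the off-diagonal components of $\mathrm{Ric}(g_0)-2\nabla^2\varphi$ to split into an $x$-dependent and a $y$-dependent part. Setting them to zero yields a pair of ODEs whose general solution forces $\Phi=\dfrac{(x-y)q(x,y)}{p(x,y)^2}$ for some quadratic $p$, the two blocks being mutually consistent exactly when $\langle p,q\rangle=0$. This simultaneously establishes existence for each $p\perp q$ and the claim that every $\mathfrak t$-invariant diagonal metric arises this way.

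For the constant-scalar-curvature statement I would compute the scalar curvature $s$ of the diagonal metric $\bar g=\frac{(x-y)q}{p^2}g_0$ directly in these coordinates. The orthotoric form guarantees that $s$, in a suitably weighted form, separates into a function of $x$ built from $A$ and its derivatives plus a function of $y$ built from $B$; imposing $s\equiv\mathrm{const}$ forces each separated piece to be an explicit polynomial expression in $z$. Matching coefficients gives $A=p\rho+R$ and $B=p\rho-R$ for a quadratic $\rho$ and a quartic $R$, while the separation constants translate into the orthogonality relations $\rho\perp p$ and $R\perp qp$. The Einstein reduction $\rho\parallel q$ then comes from additionally requiring the trace-free diagonal Ricci components, rather than only their trace, to vanish.

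The main obstacle is the curvature bookkeeping in the second and third steps: organizing the Christoffel symbols so that the off-diagonal Ricci components visibly separate in $x$ and $y$, and then recognizing the resulting polynomial conditions as the stated inner-product and transvectant relations. The equivalence of the three formulations of the last orthogonality condition ($R\perp qp$, $(q,R)^{(2)}\perp p$, and $(p,R)^{(2)}\perp q$) is a classical-invariant-theory identity: the transvectant $(\,\cdot\,,\cdot\,)^{(2)}$ is $\SL(2)$-covariant and the pairing $\langle\,\cdot\,,\cdot\,\rangle$ is the polarized discriminant, so the three conditions are related by the symmetry of the transvectant pairing and can be verified either by a short direct computation or by invariance under the $\SL(2)$-action on the variable $z$.
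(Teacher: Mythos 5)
First, a structural point: the paper you are reading does not prove this statement at all. Theorem \ref{thmDiagRicci} is quoted, with attribution, from \cite{apostolov2013ambitoric}, and is used purely as an input to the later classification; so there is no in-paper proof to compare your attempt against, and the relevant benchmark is the computation in that reference, which (like your plan) is ultimately carried out in the explicit coordinates of Theorem \ref{thmAmbitoric}. Your general framing is correct as far as it goes: every $\mathfrak t$-invariant metric in $[g]$ is $\Phi(x,y)\,g_0$; a symmetric $2$-tensor is invariant under both $J_+$ and $J_-$ exactly when it is block-diagonal for the eigenbundle splitting of $-J_+J_-$ (on one eigenplane $J_+=J_-$ and on the other $J_+=-J_-$, which forces the mixed block to vanish); and the $n=4$ conformal transformation law you quote is the right tool.

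However, as a proof the proposal has a genuine gap: the two steps that carry the entire content of the theorem are asserted as outcomes rather than derived. (i) You claim that the vanishing of the off-diagonal Ricci components "yields a pair of ODEs whose general solution forces $\Phi=\frac{(x-y)q(x,y)}{p(x,y)^2}$, the two blocks being mutually consistent exactly when $\langle p,q\rangle=0$." But these conditions are PDEs for $\varphi$ (they contain $\varphi_{xy}$); the separation into an $x$-part and a $y$-part is precisely what has to be established, and nothing in the outline indicates where the inner product $\langle p,q\rangle$ enters, or why the solution space is exactly this three-parameter family for \emph{arbitrary} $A$ and $B$. There is also an error feeding into this step: since $d\varphi=\varphi_x\,dx+\varphi_y\,dy$ and $dx$, $dy$ lie in different blocks, the term $d\varphi\otimes d\varphi$ contributes the off-diagonal piece $\varphi_x\varphi_y\,(dx\otimes dy+dy\otimes dx)$, so it is not true that the only off-diagonal contributions come from $\mathrm{Ric}(g_0)$ and the Hessian; the quadratic first-order terms must be kept, and they make the equations nonlinear in $\varphi$. (ii) Likewise, for the constant-scalar-curvature claim you assert that coefficient matching produces $A=p\rho+R$, $B=p\rho-R$ with $\rho\perp p$ and $R\perp qp$; but extracting these specific polynomial and orthogonality constraints (and their transvectant reformulations) from the separated expression for $s$ is exactly the computation the theorem encodes — it is not a corollary of the separation, it \emph{is} the result. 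In short, the route you describe is the natural computational one and would very likely succeed if carried out, but what is written restates the theorem as the anticipated output of calculations that are not performed, with one incorrect simplification along the way; if you want a complete argument without redoing all of it, the honest course here is to do what the paper does and cite \cite{apostolov2013ambitoric}.
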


\section{Moment maps and folded symplectic structures}
\subsection{Folded symplectic structures}\label{secMomFold}
In this section, we will extend the symplectic structures on ambtoric ansatz spaces to folded symplectic manifolds, and describe their moment map images.

\begin{defn}\cite{da2011symplectic}
	A closed $2$-form $\omega$ on a $2n$-dimensional manifold $M$ is a \emph{folded symplectic structure} if there exists an embedded hypersurface $Z$ of $M$ such that $\omega$ is non-degenerate on $M\backslash Z$ and $\omega^n=0\neq \omega^{n-1}$ on $Z$. $Z$ is called the \emph{folding hypersurface} or \emph{fold}.
\end{defn}

Let
$$\mathcal F(q,A,B,\mathbb T):=\{(x,y,t_1,t_2)\in\mathbb R^2\times \mathbb T^2:A(x)> 0,B(y)> 0\}\supseteq\mathcal A(q,A,B,\mathbb T).$$
To simplify notation the arguments for $\mathcal F$ will be tacit. Let $Z_\pm:=\{f^{\mp 1}=0\}\subseteq\mathcal F$ and $\mathcal F_\pm=\mathcal F\backslash Z_\mp$.
\begin{prop}
	$(\mathcal F_\pm,\omega_\pm)$ is a folded symplectic manifold with fold $Z_\pm|_{\mathcal F_\pm}$, where $\omega_\pm$ is the $2$-form defined in (\ref{eqnAmbiStructure}).
\end{prop}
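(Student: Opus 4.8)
The plan is to treat both cases uniformly by computing the top exterior powers $\omega_\pm^2$ explicitly and reading off where they vanish; in dimension four the defining conditions of a folded symplectic structure then reduce to controlling the zero locus of $\omega_\pm^2$ together with the non-vanishing of $\omega_\pm$ there. First I would observe that on $\mathcal F_+=\mathcal F\setminus Z_-$ the denominator $q(x,y)^2$ of $\omega_+$ is nowhere zero, so $\omega_+$ is a genuine smooth $2$-form there (and likewise $\omega_-$ on $\mathcal F_-$, where $(x-y)^2\neq 0$). Closedness is then automatic: $\omega_\pm$ restricts to the K\"ahler form of (\ref{eqnAmbiStructure}) on the dense open subset $\mathcal A\cap\mathcal F_\pm$, so $d\omega_\pm$ is a continuous form vanishing on a dense set, hence identically zero. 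It remains to analyse non-degeneracy.

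Next I would reduce the computation of $\omega_\pm^2$ to a single wedge product. Writing $\Omega:=dx\wedge dy\wedge d\tau(x)\wedge d\tau(y)$, a direct expansion (all squared decomposable terms vanish) gives $\omega_+^2=\tfrac{2}{q(x,y)^4}\,\Omega$ and $\omega_-^2=\tfrac{-2}{(x-y)^4}\,\Omega$. The crux is therefore to factor $\Omega$, i.e. to understand $d\tau(x)\wedge d\tau(y)$. Since the coefficient $1$-forms of the quadratic $d\tau(z)$ lie in the $2$-dimensional span of $dt_1,dt_2$, the $2$-form $d\tau(x)\wedge d\tau(y)$ is a scalar multiple of $dt_1\wedge dt_2$, antisymmetric in $x,y$, hence divisible by $(x-y)$. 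I claim moreover that
\[
d\tau(x)\wedge d\tau(y)=\lambda\,(x-y)\,q(x,y)\,dt_1\wedge dt_2
\]
for a constant $\lambda\neq 0$. This is the main obstacle, and it is exactly where the orthogonality $d\tau\perp q$ is used: writing $d\tau(z)=\tau_0 z^2+2\tau_1 z+\tau_2$, the symmetric remainder $d\tau(x)\wedge d\tau(y)/(x-y)$ has polarization coefficients $2\,\tau_0\wedge\tau_1$, $\tau_0\wedge\tau_2$, $2\,\tau_1\wedge\tau_2$, and substituting the relation $2q_1\tau_1=q_2\tau_0+q_0\tau_2$ coming from $\langle q,d\tau\rangle=0$ shows these are proportional to $q_0,q_1,q_2$ with common factor $(\tau_0\wedge\tau_2)/q_1$. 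Non-degeneracy of the ambitoric coframe on $\mathcal A$ forces $\lambda\neq 0$.

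Granting the factorization, $\Omega=\lambda(x-y)q(x,y)\,dx\wedge dy\wedge dt_1\wedge dt_2$, so $\omega_+^2=\tfrac{2\lambda(x-y)}{q(x,y)^3}\,dV$ and $\omega_-^2=\tfrac{-2\lambda\,q(x,y)}{(x-y)^3}\,dV$, with $dV$ the standard volume form. Thus on $\mathcal F_+$ the form $\omega_+$ is non-degenerate except on $\{x=y\}$, and on $\mathcal F_-$ the form $\omega_-$ is non-degenerate except on $\{q(x,y)=0\}$. To match these loci with $Z_\pm=\{f^{\mp1}=0\}$ I would recover the conformal factor from the ratio of Pfaffians: in dimension four $f^4=|\omega_-^2|/|\omega_+^2|=|q(x,y)/(x-y)|^4$, so $f=|q(x,y)/(x-y)|$ up to a constant, giving $Z_+=\{x=y\}$ and $Z_-=\{q=0\}$ as required. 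Finally I would check the remaining hypotheses: $\{x=y\}\cap\mathcal F_+$ and $\{q=0\}\cap\mathcal F_-$ are embedded hypersurfaces since $d(x-y)\neq 0$ and $dq=(q_0y+q_1)dx+(q_0x+q_1)dy$ is nonzero once $x\neq y$; and $\omega_\pm$ is nonzero on its fold because, using $d\tau(x)=d\tau(y)$ along $\{x=y\}$, one gets $\omega_+=(dx+dy)\wedge d\tau(x)/q^2\neq 0$, while $\omega_-=\bigl(dx\wedge d\tau(y)-dy\wedge d\tau(x)\bigr)/(x-y)^2\neq0$ on $\{q=0\}$, both relying on $d\tau(z)\neq 0$. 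This verifies $\omega_\pm^2=0\neq\omega_\pm$ on the fold and completes the argument.
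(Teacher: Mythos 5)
Your proposal follows the same route as the paper's own proof: compute $\omega_\pm^2$ explicitly, identify its vanishing locus inside $\mathcal F_\pm$ with $Z_\pm$, and then observe that $\omega_\pm$ itself survives along the fold. The paper calls the computation routine and records it as $\omega_\pm^2 = \frac{f^{\mp 2}}{A(x)B(y)}dx\wedge d_\pm^cx\wedge dy\wedge d_\pm^c y$; your factorization $d\tau(x)\wedge d\tau(y)=\lambda\,(x-y)\,q(x,y)\,dt_1\wedge dt_2$, derived from $\langle q,d\tau\rangle=0$, is a correct self-contained substitute, and your density argument for closedness is sound.

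However, the justification of your last step is flawed as written: you conclude non-vanishing of $\omega_\pm$ on the folds ``relying on $d\tau(z)\neq 0$'', but that blanket claim is false in general. If $q$ has a double root at $z_0$ (for instance $q(z)=z^2$, so $q(x,y)=xy$ and $z_0=0$), orthogonality forces $\tau_2=0$ and hence $d\tau(0)=0$. The conclusion still holds, and the repair uses only the factorization you already proved: if $d\tau(x_0)=0$, then wedging with $d\tau(y)$ gives $\lambda\,(x_0-y)\,q(x_0,y)\equiv 0$, so $q(x_0,\cdot)\equiv 0$ since $\lambda\neq 0$. In particular $q(x_0,x_0)=0$, so such a point can never lie on $Z_+|_{\mathcal F_+}$, where $q\neq 0$; this rescues the claim $d\tau(x)\neq 0$ exactly where you need it for $\omega_+$. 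For $\omega_-$ on $Z_-|_{\mathcal F_-}$, note that $\omega_-$ vanishes at a point only if \emph{both} $d\tau(x)=0$ and $d\tau(y)=0$ there, since $dx\wedge d\tau(y)$ and $dy\wedge d\tau(x)$ involve independent directions in $\Lambda^2$; by the above this would make the distinct values $x\neq y$ both double roots of $q$, which is impossible. A second, more cosmetic slip: your common factor $(\tau_0\wedge\tau_2)/q_1$ presumes $q_1\neq 0$ (it is $0/0$ in the parabolic normal form $q\equiv 1$); the proportionality $(2\,\tau_0\wedge\tau_1,\ \tau_0\wedge\tau_2,\ 2\,\tau_1\wedge\tau_2)\parallel(q_0,q_1,q_2)$ is what you actually need, and it follows for every $q\not\equiv 0$ by wedging the relation $2q_1\tau_1-q_2\tau_0-q_0\tau_2=0$ with $\tau_0$, $\tau_1$, $\tau_2$ in turn.
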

\begin{proof}
	Recall that $f=\frac{q(x,y)}{x-y}$.  It is a routine computation to show that $\omega_\pm$ is a closed non-vanishing $2$-form on $\mathcal F_\pm$. Also one can compute
	$$\omega_\pm^2 = \frac{f^{\mp 2}}{A(x)B(y)}dx\wedge d_\pm^cx\wedge dy\wedge d_\pm^c y,$$ 
	so that the vanishing locus of $\omega_\pm^2$ is $Z_\pm$. Noting that $\omega_\pm$ does not vanish along $Z_\pm$ then tells us that $Z_\pm$ is a folding hypersurface for $\omega_\pm$.
\end{proof}

\begin{prop}
	$\mu^\pm(Z_\pm)$ is a (possibly degenerate) conic in $\mathfrak t^*$, which we will denote by $\mathcal C_\pm$.
\end{prop}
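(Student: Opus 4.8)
The plan is to compute the moment maps $\mu^\pm$ explicitly on $\mathcal F_\pm$ and then recognize their restrictions to the folds as rational curves of a very special shape. The only structural input I need is that for each $K\in\mathfrak t$ the contraction $c_K(z):=d\tau(z)(K)$ is an honest real-coefficient quadratic polynomial in $z$, orthogonal to $q(z)$ by Theorem~\ref{thmAmbitoric}, and that $d\tau(z)$ annihilates $\partial_x,\partial_y$ while $x,y$ are $\mathbb T$-invariant (so $K\lrcorner dx=K\lrcorner dy=0$). Granting this, contracting the explicit forms (\ref{eqnAmbiStructure}) gives
\[
K\lrcorner\omega_+=-\frac{c_K(y)\,dx+c_K(x)\,dy}{q(x,y)^2},\qquad
K\lrcorner\omega_-=\frac{-c_K(y)\,dx+c_K(x)\,dy}{(x-y)^2},
\]
so that the moment-map components $f^\pm_K=\langle\mu^\pm,K\rangle$ are the $\mathbb T$-invariant primitives of these closed $1$-forms, which exist because the action is Hamiltonian.

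Next I would solve for these primitives in closed form. For $\mu^-$ the direct ansatz $f^-_K=-\widehat{c_K}(x,y)/(x-y)$, where $\widehat{c_K}$ denotes the \emph{polarization} of $c_K$, verifies both first-order equations immediately, the integration constant reflecting the harmless freedom in the moment map. For $\mu^+$ I would look for $f^+_K=N_K(x,y)/q(x,y)$ with $N_K$ a symmetric biaffine form; matching coefficients collapses this to a $3\times3$ linear system for the coefficients of $N_K$, whose determinant vanishes identically and whose solvability condition turns out to be exactly $\langle q,c_K\rangle=0$. Thus the orthogonality hypothesis $d\tau\perp q$ is precisely what makes $\mu^+$ exist in this form, the residual ambiguity $N_K\mapsto N_K+\lambda q$ being again the moment-map constant. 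Throughout one works inside $\mathcal F_\pm$, where the relevant denominator is nonzero.

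The final step restricts to the fold and identifies the image. On $Z_+=\{x=y\}$, parametrizing by $s:=x=y$ gives $f^+_K|_{Z_+}=N_K(s,s)/q(s,s)$, a ratio of two quadratics in $s$ with common denominator $q(s,s)$, which is nonzero on $Z_+\cap\mathcal F_+$ precisely because $q\neq0$ there. On $Z_-=\{q=0\}$ I use that the conic $q(x,y)=0$ is rationally parametrized by $s:=x$ via $y=-\frac{q_1x+q_2}{q_0x+q_1}$; substituting and using $x-y=q(x,x)/(q_0x+q_1)$ collapses $f^-_K|_{Z_-}$ to $(\text{quadratic in }s)/q(s,s)$, with $q(s,s)\neq0$ since $x\neq y$ on $\mathcal F_-$. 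In both cases the two components of $\mu^\pm$ along the fold have the form $s\mapsto\big(P_1(s)/Q(s),P_2(s)/Q(s)\big)$ with $P_1,P_2,Q$ binary quadratics. Homogenizing, $s\mapsto[Q(s):P_1(s):P_2(s)]$ is a linear image of the Veronese embedding $s\mapsto[s^2:s:1]$ of $\mathbb P^1$, whose image is the conic $\{Y_0Y_2=Y_1^2\}$; a linear map carries this conic to a (possibly degenerate) conic, and if $Q,P_1,P_2$ are linearly dependent the image collapses onto a line. Either way $\mu^\pm(Z_\pm)$ lies on a conic $\mathcal C_\pm$, as claimed.

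I expect the main obstacle to be the $\mu^+$ computation: showing that the closed form $N_K/q$ exists requires the solvability analysis of the degenerate linear system together with the observation that its consistency condition coincides with the orthogonality $d\tau\perp q$ already built into Theorem~\ref{thmAmbitoric}. The rest is careful bookkeeping: confirming that the shared denominator $q(s,s)$ does not vanish on the relevant portion of each fold (which follows from working inside $\mathcal F_\pm$), and checking that the linearly-dependent case still produces a genuine degenerate conic rather than something worse.
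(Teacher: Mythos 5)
Your argument is correct in substance, but it takes a genuinely different route from the paper's proof, which is a short case check: the paper gauge-fixes to the parabolic/hyperbolic/elliptic normal forms of section \ref{secNorm}, where $\mu^\pm$ are written down explicitly, restricts them to $Z_\pm$, and reads off the conic equations (e.g.\ $\mu_1^\pm\mu_2^\pm=-\tfrac14$ in the hyperbolic case, $(\mu_1^\pm)^2+(\mu_2^\pm)^2=1$ in the elliptic case). You instead work in a general gauge and in effect re-derive the momentum formulas that the paper later quotes from \cite{apostolov2013ambitoric} in lemma \ref{lemPMomLine}: your closed form $f^-_K=-\widehat{c_K}(x,y)/(x-y)$ checks out by direct differentiation for an arbitrary quadratic $c_K$, and your solvability analysis for $f^+_K=N_K(x,y)/q(x,y)$ is sound --- the coefficient map $N\mapsto c$ is essentially the cross product with $q$, so its kernel is $\mathbb{R}q$ and its image is exactly $q^{\perp}$ for the $(2,1)$ inner product, which is why $d\tau\perp q$ is precisely the consistency condition, with the residual freedom $N_K\mapsto N_K+\lambda q$ being the additive constant in the moment map. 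Your Veronese step (both momenta restrict on the fold to quadratics in the fold parameter over the common denominator $q(s)$, so the homogenized image is a projective-linear image of $[s^2:s:1]$) gives a gauge-invariant, case-free explanation of \emph{why} a conic appears, which the paper's computation does not provide; conversely, the paper's computation yields the explicit equations of $\mathcal C_\pm$ and the observation that the conic type matches the ambitoric type, facts that are used later in the paper and that your argument leaves implicit.

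Two degenerate configurations deserve explicit treatment in a complete write-up. First, in the parabolic normal form $q\equiv 1$, so $Z_-=\{f=0\}$ is empty as a subset of $\mathcal F$ and your parametrization of $\{q=0\}$ is vacuous there; the paper's statement $\mu^-(Z_-)=\{(0,\tfrac12),(0,-\tfrac12)\}$ is really a statement about the limits of $\mu^-$ as $x-y\to\infty$, i.e.\ the paper implicitly places the negative fold at infinity, and your argument says nothing about this compactified fold. Second, for a parabolic-type $q$ not in normal form, say $q(x,y)=xy$, the locus $\{q=0\}$ contains the line $\{q_0x+q_1=0\}$, which your parametrization by $s=x$ misses; that component must be parametrized by $y$ instead (its image is then contained in a line, still a degenerate conic). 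Neither point is fatal --- the first is a looseness shared with the paper's own definitions, and the second is handled by symmetry --- but both should be disposed of explicitly.
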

\begin{proof}
	This can be done by an easy direct computation using the normal forms given in section \ref{secNorm}. In the hyperbolic case, one finds that
	$$\mu^\pm (Z_\pm)=\{\mu_1^\pm\mu_2^\pm=-\tfrac14\}.$$
	In the elliptic case, one finds that
	$$\mu^\pm (Z_\pm)=\{(\mu_1^\pm)^2+(\mu_2^\pm)^2=1\}.$$
	In the parabolic case, one finds that
	\begin{align*}
	\mu^+(Z_+) =& \{(\mu_1^+)^2=4\mu_2^+\},\\
	\mu^-(Z_-) =& \{(0,\tfrac 12),(0,-\tfrac 12)\}.
	\end{align*}
\end{proof}

Note that the type of the conic matches the type of the ambitoric structure (aside from the degenerate conic $\mu^-(Z_-)$ in the parabolic case), so that for example a hyperbolic ambitoric structure gives a hyperbola as the image of the fold.

The conic (or more precisely, its dual) is used in \cite{apostolov2013ambitoric2} to study ambitoric compactifications. The key use made in \cite{apostolov2013ambitoric2} of the conic is the fact that the moment map sends level sets of $x$ or $y$ to (subsets of) lines tangent to the conic. If the conic is non-degenerate, this leads to moment map images near the fold that look like the example shown in figure \ref{figMomFold}. The moment map is a $2-1$ cover near the conic, with the image folding along the conic and remaining in the exterior of the conic. This behaviour was noted at the end of appendix A in \cite{apostolov2013ambitoric2}, so the previous proposition is not essentially new. 

\begin{figure}\protect\label{figMomFold}
	\centering
	\includegraphics[scale =0.9]{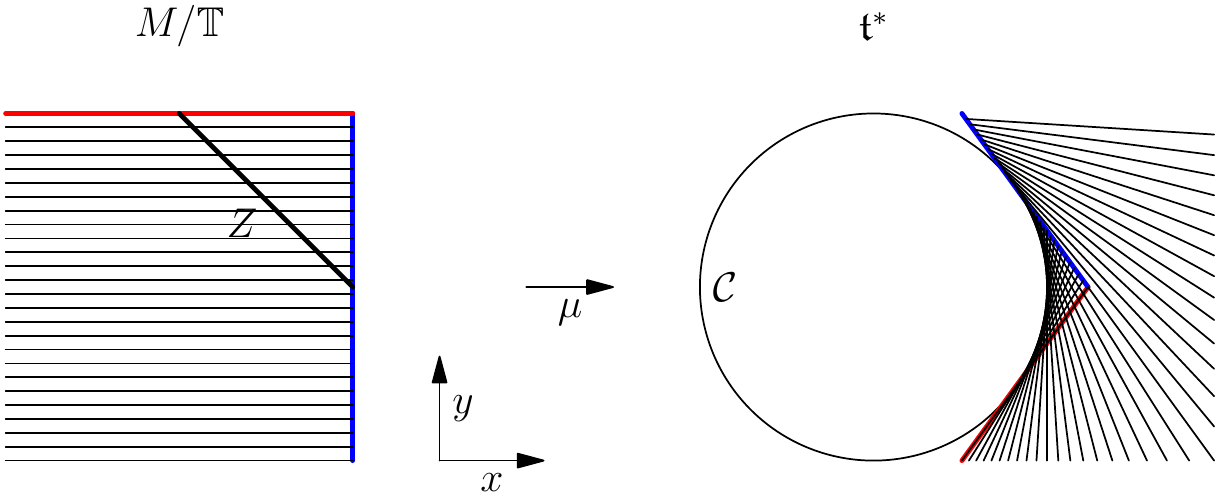}
	\caption{The moment map near a fold $Z$ which intersects level sets of $x$ and $y$ where $A(x)$ and $B(y)$ (shown in blue and red respectively).}
\end{figure}

We will need the following lemma later:

\begin{lemma}\label{lemPMomLine}
	If $p(z)$ is a non-constant quadratic polynomial orthogonal to $q(z)$, then the moment maps $\mu^\pm$ map the vanishing locus of $p(x,y)$ to lines in $\mathfrak t^*$.
\end{lemma}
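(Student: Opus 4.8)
The plan is to compute the moment maps $\mu^\pm$ explicitly from the ansatz (\ref{eqnAmbiStructure}) and to read off the image of $\{p(x,y)=0\}$ directly. First I would exploit that $x$ and $y$ are $\mathfrak t$-invariant, so that $K\lrcorner dx = K\lrcorner dy = 0$ for every $K\in\mathfrak t$. Contracting a Killing field $K$ into $\omega_\pm$ then only hits the $d\tau$ factors, and writing $P_K(z):=d\tau(z)(K)$ --- a quadratic polynomial in $z$ whose coefficients are linear in $K$ --- one obtains
\begin{align*}
 df_K^+ &= \frac{P_K(y)\,dx + P_K(x)\,dy}{q(x,y)^2}, &
 df_K^- &= \frac{P_K(y)\,dx - P_K(x)\,dy}{(x-y)^2}.
\end{align*}
Because $d\tau(z)$ is orthogonal to $q(z)$ by Theorem \ref{thmAmbitoric}, each $P_K$ satisfies $2q_1(P_K)_1 - q_2(P_K)_0 - q_0(P_K)_2 = 0$; this is the relation I expect to drive the whole argument.

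Next I would produce explicit primitives. For $\mu^-$, the guess $f_K^- = -P_K(x,y)/(x-y)$, where $P_K(x,y)$ is the polarization, should check out: upon differentiating, the degree-two terms cancel against the $(x-y)$ factor and no constraint on $P_K$ is needed. For $\mu^+$, I would look for a primitive of the form $(\text{affine in }x,y)/q(x,y)$; matching the $dx$- and $dy$-components produces two scalar equations whose compatibility is \emph{exactly} the orthogonality relation $\langle q, P_K\rangle = 0$, and solving them yields $f_K^+ = (a_K(x+y)+c_K)/q(x,y)$ with $a_K,c_K$ linear in $K$ (at least when $q_0\neq 0$; the degenerate coefficient cases I would treat by a parallel computation or by continuity). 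Repackaging, this gives the closed forms $\mu^-(x,y) = -\Pi(x,y)/(x-y)$ and $\mu^+(x,y) = ((x+y)\,\alpha + \gamma)/q(x,y)$, where $\Pi(x,y)\in\mathfrak t^*$ is the $\mathfrak t^*$-valued polarization $K\mapsto P_K(x,y)$ and $\alpha,\gamma\in\mathfrak t^*$ are fixed covectors.

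Finally I would restrict to the vanishing locus of $p(x,y)$. The map $K\mapsto P_K$ is a linear isomorphism from $\mathfrak t$ onto the two-dimensional space $q^\perp$ of quadratics orthogonal to $q$ (this is where regularity and the nondegeneracy of $\omega_\pm$ enter); since $p\perp q$, there is a unique $v_p\in\mathfrak t$ with $P_{v_p}=p$. For $\mu^-$ this gives $\langle \mu^-, v_p\rangle = -p(x,y)/(x-y)$, which vanishes identically on $\{p=0\}$, so $\mu^-$ maps this locus into the line $\{\ell\in\mathfrak t^*:\ell(v_p)=0\}$. For $\mu^+$ I would instead eliminate $xy$ using $p(x,y)=0$ to see that $q(x,y)$ restricts to an affine function $m\,(x+y)+n$ of $s:=x+y$; then $\mu^+ = (s\,\alpha+\gamma)/(ms+n)$ is a M\"obius reparametrization of $s$, whose image is the affine line through $\alpha/m$ in the direction $\gamma - (n/m)\alpha$. (Note that this last step does not actually require $p\perp q$; orthogonality is used only to produce $v_p$ in the $\mu^-$ case.)

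The main obstacle I anticipate is the explicit integration step --- guessing the correct shape of the two primitives and verifying that the $\mu^+$ primitive closes up \emph{only} because of $d\tau\perp q$ --- together with the bookkeeping needed to cover the degenerate conic cases $q_0=0$ or $p_0=0$, where the denominators above must be reorganized. Once the closed forms for $\mu^\pm$ are in hand, the line property is essentially immediate.
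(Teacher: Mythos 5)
Your proposal is correct in substance, but it follows a genuinely different route from the paper's, so let me compare. The paper's proof is one line: it identifies $p$ with an element of $\mathfrak t$ and quotes from \cite{apostolov2013ambitoric} the pairings $\mu^+_p=-p(x,y)/q(x,y)$ and $\mu^-_p=-p(x,y)/(x-y)$; since both vanish on $\{p(x,y)=0\}$, the vector $p$ is normal to both images, which must therefore lie in lines. Your treatment of $\mu^-$ is exactly this argument made self-contained (your primitive $f_K^-=-P_K(x,y)/(x-y)$, which indeed needs no orthogonality, re-derives the quoted formula, and $v_p$ plays the role of the normal vector). For $\mu^+$ you deviate: instead of a normality argument you integrate to the ``affine over $q$'' form and finish by a M\"obius reparametrization. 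This split is actually well-advised rather than a detour, because the two quoted pairing formulas cannot both hold under a \emph{single} linear identification of $\mathfrak t$ with $q^{\perp}$ in this paper's conventions: in the hyperbolic normal form one has $d\tau(z)=z^2\,dt_2+dt_1$, so $P_{\partial_{t_2}}=z^2$, and indeed $\mu_2^-=-xy/(x-y)=-P_{\partial_{t_2}}(x,y)/(x-y)$, yet $\mu_2^+=+xy/(x+y)$, the \emph{opposite} sign to $-P_{\partial_{t_2}}(x,y)/q(x,y)$; the identifications adapted to $\mu^+$ and $\mu^-$ differ by the map $p\mapsto\tfrac12\bigl(q'p-qp'\bigr)$ on $q^{\perp}$. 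Your hybrid argument never needs to reconcile the two identifications, and as you note it even shows that $\mu^+$ sends \emph{every} symmetric conic $\{p(x,y)=0\}$ into a line, orthogonality being needed only on the $\mu^-$ side. What the paper's route buys in exchange is brevity and uniformity across all three types.

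The one point you cannot leave to ``continuity'' is the case $q_0=0$: this is not a degenerate rarity but is precisely the situation in the paper's own parabolic ($q(z)=1$) and hyperbolic ($q(z)=2z$) normal forms, and there your affine-over-$q$ primitive genuinely does not exist --- $f^+_{\partial_{t_2}}=xy/(x+y)$ cannot be written as $\bigl(a(x+y)+c\bigr)/(x+y)$, because adding multiples of $q$ to the numerator can never remove an $xy$ term when $q_0=0$. Two honest repairs are available. First, gauge invariance: by the results of section \ref{secGauge}, a transformation $z\mapsto(az+b)/(cz+d)$ preserves orthogonality (the discriminant pairing is $\SL_2$-invariant), carries vanishing loci to vanishing loci, and leaves the moment maps untouched, while the new leading coefficients $\tilde q_0$, $\tilde p_0$ are proportional to $q(-d/c)$, $p(-d/c)$; choosing $-d/c$ away from the finitely many roots of $pq$ reduces everything to your generic case $q_0\neq0$, $p_0\neq0$. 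Second, direct computation: when $q_0=0$ one instead normalizes the numerator modulo $q$ by killing a different coefficient (e.g.\ $S(x,y)=s_0xy+s_2$ in the hyperbolic gauge), and the same M\"obius argument goes through verbatim. With either patch inserted where you wrote ``parallel computation or by continuity,'' your proof is complete.
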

\begin{proof}
	Consider $p$ as an element of $\mathfrak t$, which is identified with the space of quadratic polynomials orthogonal to $q$. The natural pairings of $p$ with $\mu^\pm(x,y)\in\mathfrak t^*$ are computed in \cite{apostolov2013ambitoric} as
	\begin{align*}
		\mu^+_p(x,y)&=-\frac{p(x,y)}{q(x,y)},\\
		\mu^-_p(x,y)&=-\frac{p(x,y)}{x-y}.
	\end{align*}
	Since $\mu^{\pm}_p(x,y)|_{p(x,y)=0}=0$, we find that the vector $p\in\mathfrak t$ is normal to the curves $\mu^\pm\big(\{p(x,y)=0\}\big)\subset\mathfrak t^*$. This implies that these curves are lines as claimed.
\end{proof}

\subsection{Normal forms}\label{secNorm}
The results in this section are taken directly from \cite{apostolov2013ambitoric}. The classification from theorem \ref{thmAmbitoric} can be further refined into 3 cases, called \emph{parabolic}, \emph{hyperbolic} and \emph{elliptic} respectively if the discriminant of $q(z)$ is zero, negative, or positive respectively. Up to homothety, we can assume that the discriminant is $0$ or $\pm1$, which is done for the normal forms described below. We also provide diagrams indicating the behaviour of the moment maps in the case where $A(x)$ and $B(y)$ are strictly positive functions.

\subsubsection{Parabolic type}
The parabolic type is characterized by $q(z)=1$ and $d\tau_0=0$. This allows us to write the ambitoric structure as
\begin{align*}
g_0 =& \frac{dx^2}{A(x)}+\frac{dy^2}{B(y)}
+\frac{A(x)(dt_1+y\, dt_2)^2}{(x-y)^2}
+\frac{B(y)(dt_1+x\, dt_2)^2}{(x-y)^2},\\
\omega_+ =& {dx\wedge(dt_1+y\, dt_2)}
+ {dy\wedge(dt_1+x\, dt_2)},\\
\omega_- =& \frac{dx\wedge(dt_1+y\, dt_2)}{(x-y)^2}
- \frac{dy\wedge(dt_1+x\, dt_2)}{(x-y)^2}.\\
\end{align*}

the momentum maps $\mu^\pm=(\mu_1^\pm,\mu_2^\pm):\mathcal A\to\mathfrak t^*$ are given by
\begin{align*}
\mu_1^+ =& x+y,\\
\mu_2^+ =& xy,\\
\mu_1^- =& -\frac1{x-y},\\
\mu_2^- =& -\frac{x+y}{2(x-y)}.
\end{align*}

\begin{center}
	\includegraphics[width=0.6\textwidth]{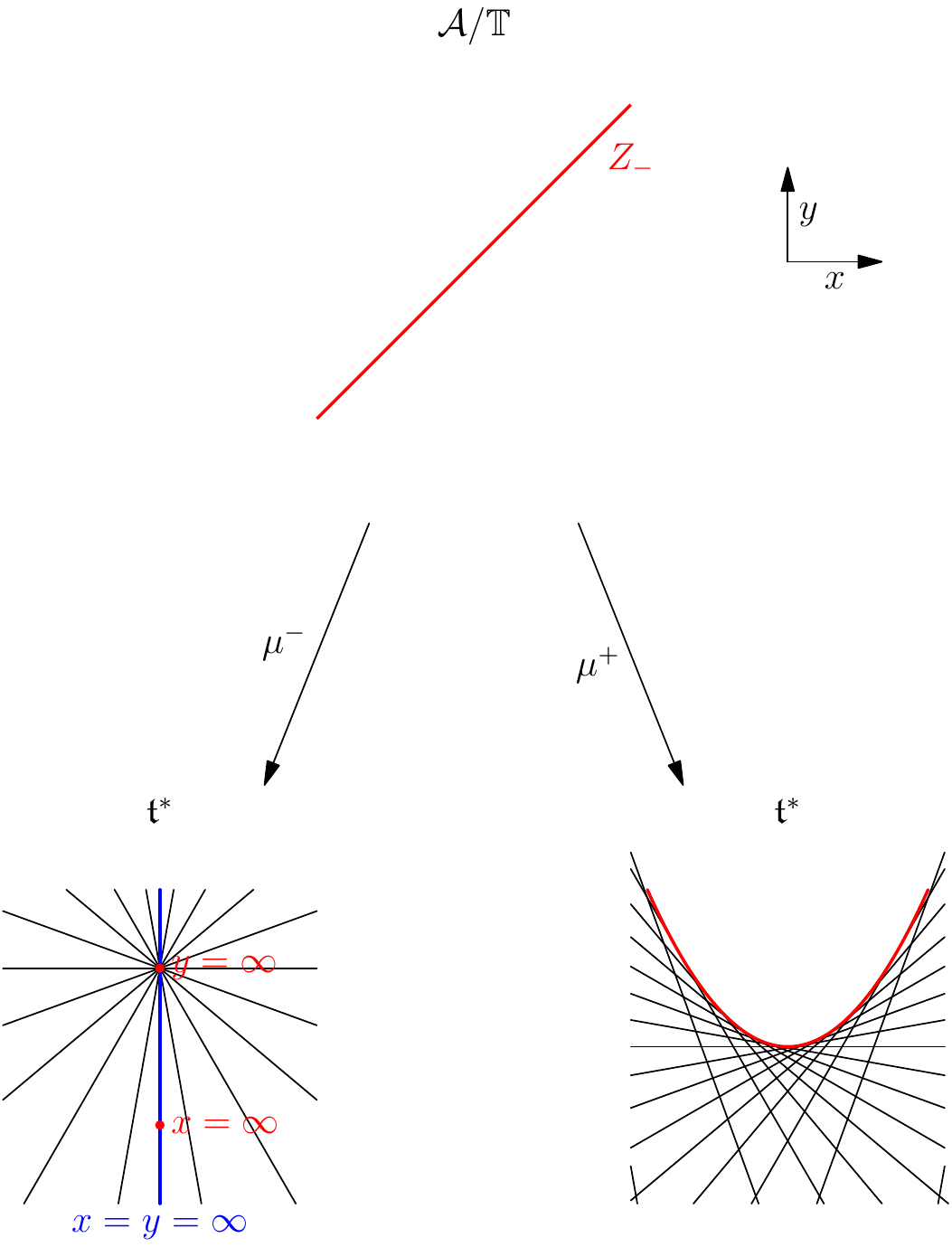} 
\end{center}

\subsubsection{Hyperbolic type}\label{SecHypNorm}
The hyperbolic type is characterized by $q(z)=2z$ and $d\tau_1=0$. This allows us to write the ambitoric structure as
\begin{align*}
g_0 =& \frac{dx^2}{A(x)} + \frac{dy^2}{B(y)}
+ \frac{A(x)(dt_1+y^2\, dt_2)^2}{(x^2-y^2)^2}
+ \frac{B(y)(dt_1+x^2\, dt_2)^2}{(x^2-y^2)^2},\\
\omega_\pm =& \frac{dx\wedge(dt_1+y^2\, dt_2)}{(x\pm y)^2}
\pm\frac{dy\wedge(dt_1+x^2\, dt_2)}{(x\pm y)^2}.
\end{align*}
The momentum map $\mu^\pm=(\mu_1^\pm,\mu_2^\pm):\mathcal A\to\mathfrak t^*$ is given by
\begin{align*}
\mu_1^\pm =& -\frac1{x\pm y}, \\
\mu_2^\pm =& \pm\frac{xy}{x\pm y}.
\end{align*}

\begin{center}
	\includegraphics[width=0.7\textwidth]{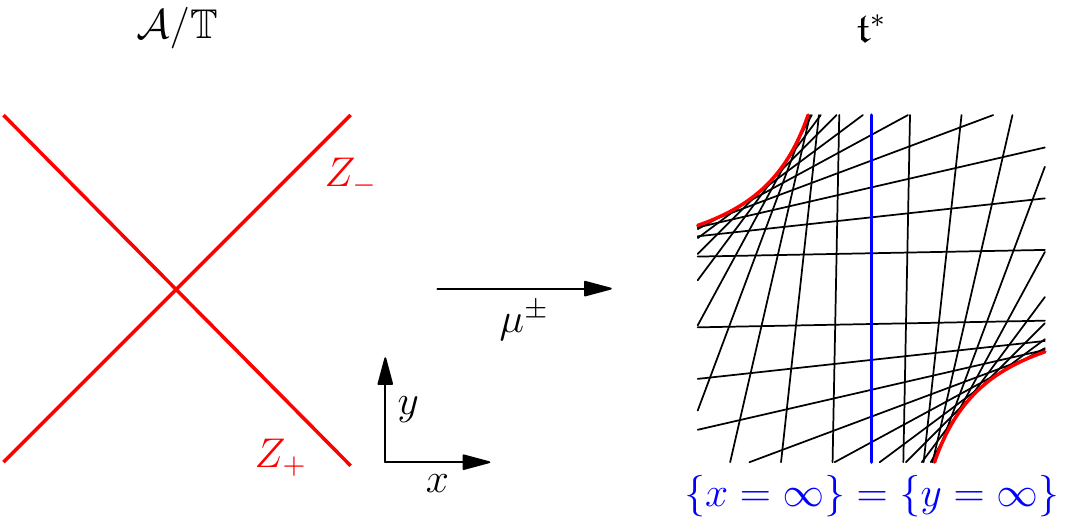} 
\end{center}

\subsubsection{Elliptic type}
The elliptic type is characterized by $q(z)=1+z^2$ and $d\tau_0+d\tau_2=0$. This allows us to write the ambitoric structure as
\begin{align*}
g_0 =& \frac{dx^2}{A(x)}+\frac{dy^2}{B(y)}
+\frac{A(x)(2y\, dt_1+(y^2-1)\, dt_2)^2}{(x-y)^2(1+xy)^2}\\
&+\frac{B(y)(2x\, dt_1+(x^2-1)\, dt_2)^2}{(x-y)^2(1+xy)^2},\\
\omega_+ =& \frac{dx\wedge(2y\, dt_1+(y^2-1)\, dt_2)}{(1+xy)^2}
+ \frac{dy\wedge(2x\, dt_1+(x^2-1)\, dt_2)}{(1+xy)^2},\\
\omega_- =& \frac{dx\wedge(2y\, dt_1+(y^2-1)\, dt_2)}{(x-y)^2}
- \frac{dy\wedge(2x\, dt_1+(x^2-1)\, dt_2)}{(x-y)^2}.\\
\end{align*}

The momentum maps $\mu^\pm=(\mu_1^\pm,\mu_2^\pm):\mathcal A\to\mathfrak t^*$ are given by
\begin{align*}
\mu_1^+ =& -\frac{1-xy}{1+xy},\\
\mu_2^+ =& -\frac{x+y}{1+xy},\\
\mu_1^- =& -\frac{x+y}{x-y},\\
\mu_2^- =& \frac{1-xy}{x-y}.
\end{align*}

\begin{center}
	\includegraphics[width=0.7\textwidth]{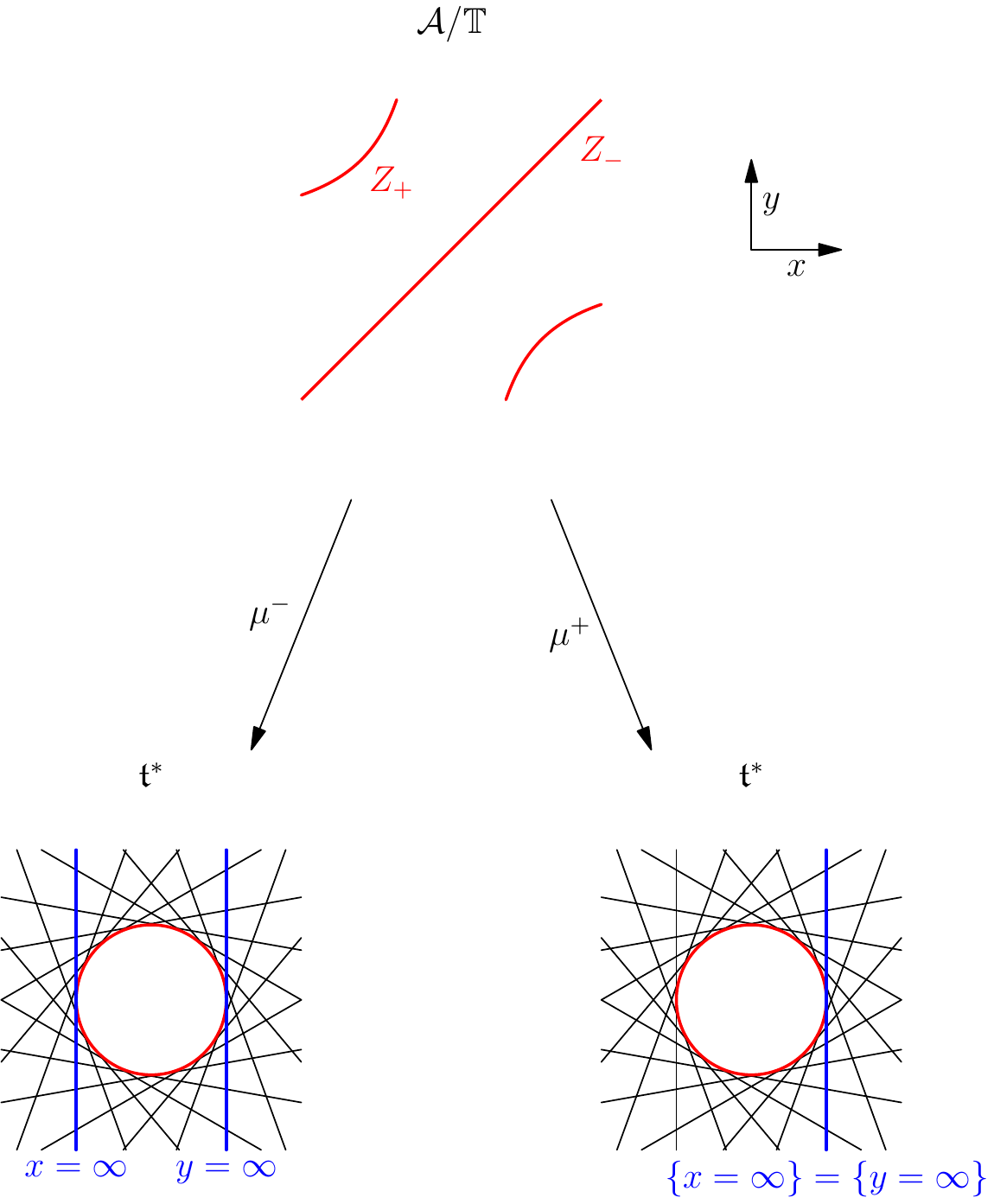} 
\end{center}

\section{Riemannian Kerr metrics are ambitoric}

The Kerr family of metrics are 4-dimensional Lorentzian space-times which are stationary, axis-symmetric, and type D.

The stationary and axis-symmetric condition means that there are commuting Killing vector fields corresponding to the time direction and a rotation around a fixed axis. The span of these vector fields will be our Lie algebra $\mathfrak t$.

The type D condition tells us that the Weyl curvature tensor has two repeated principal null directions. In his work on Hermitian geometry in the Lorentzian setting, Flaherty \cite{flaherty1976hermitian} shows that a metric admits compatible integrable almost complex structures $J_\pm$ with both orientations if and only if the metric is of type D. It is worth noting that in order for an almost complex structure to be compatible with a Lorentzian metric, it must be complex valued (ie, a section of $\End(TM)\otimes\mathbb C$).

Another feature of Kerr that will be useful for us is that it admits a Wick rotation. It is well known in the physics community (see for example \cite{gibbons1979classification}) that if one allows the time and angular momentum variables to take imaginary values instead of real ones, then the metric will still take real values. Moreover, the signature of this Wick-rotated Kerr metric is no longer Lorentzian, but either Riemannian or signature $(2,2)$ depending on the region. We will only consider the regions which have a Riemannian signature, and call the metric in these regions \emph{Riemannian Kerr}. The complex structures $J_\pm$ become real valued after the Wick rotation, so that the Riemannian Kerr metrics are ambihermitian.

Since Einstein ambihermitian spaces are ambik\"ahler \cite{apostolov2013ambitoric}, and $\mathfrak t$ are the infinitesimal generators of a torus action, we see that the Riemannian Kerr metrics are ambitoric. The K\"ahler forms are given in \cite{aliev2006self}. They present the more general Kerr-Taub-bolt instanton, where the construction above will also hold, but for simplicity we only present the Kerr case. The more general case will correspond to adding a parameter that translates the $y$ variable.

After a change of variables from this form, one finds that the Riemannian Kerr fits into the normal form for a hyperbolic type ambitoric manifold, with
\begin{align}
A(x) &= x^2-2Mx-\alpha^2,\\
B(y) &=\alpha^2-y^2,
\end{align}
where $M$ and $\alpha$ are real parameters corresponding to the mass and imaginary angular momentum of the Wick rotated black hole. The Ricci-flat metric is given by $(x^2-y^2)g_0$.

The roots of $A(x)$ are given by
$$x_\pm=M\pm\sqrt{M^2+\alpha^2}.$$
Since $\alpha<M$, the domain in $x-y$ coordinates looks like the following:

\begin{figure}[H]
	\centering
	\includegraphics[scale =0.9]{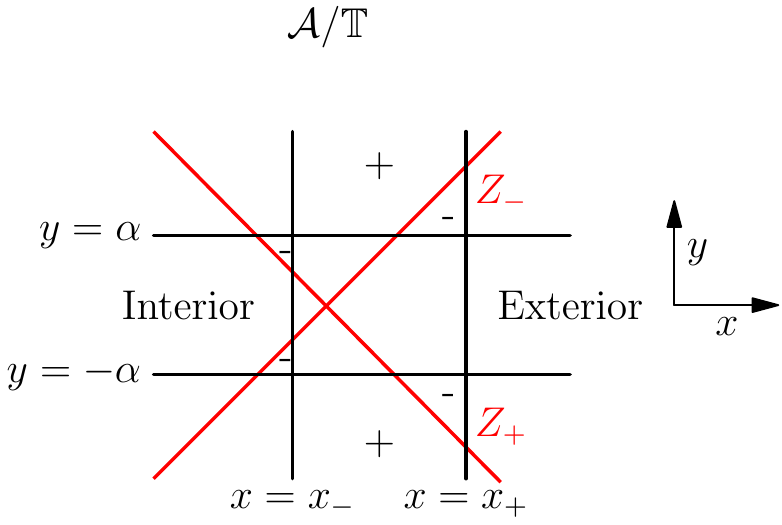}
	\caption{The domain of $\mathcal A/\mathbb T$ in $(x,y)$ coordinates. The regions which we call interior and exterior are labelled, and have positive Riemannian signature. The other regions with $\pm$ Riemannian signature are labelled $\pm$. The remaining regions have signature $(2,2)$.}
\end{figure}

The region corresponding to $x>x_+$ corresponds to the exterior of the black hole. The boundary components of this region correspond to roots of $A(x)$ or $B(y)$ or $x\to\infty$. The region at $x=\infty$ is infinitely distant, so we don't need to extend the structure there. Extending the ambitoric structure to the other boundary regions is well understood, as discussed in lemma \ref{lemEdgeComp}.

The region corresponding to $x<x_-$ corresponds to the interior of the black hole. In Lorentzian signature, there is a ring singularity in the interior region where the curvature diverges to infinity. This singularity transforms under Wick rotation to the folding hypersurfaces $Z_
\pm:=\{x\mp y=0\}$ of Riemannian Kerr. This motivates the work later in this paper where we study the geometry near these hypersurfaces.

The moment map image is roughly the same for both $\omega_\pm$. The main component is drawn in figure \ref{figMomKerr}. There are two triangular domains with edges given by $\{x=x_-\},\{y=\pm\alpha\}$ and $Z_\pm$. One of these regions is near the fold, so has the behaviour depicted in figure \ref{figMomFold}, although it is hard to see in figure \ref{figMomKerr}. The other region (not drawn) gets mapped to a cone with edges given by $\{x=x_-\}$ and $\{y=\mp\alpha\}$ oriented away from the component which was drawn.

\begin{figure}[H]
	\centering
	\includegraphics[scale=1]{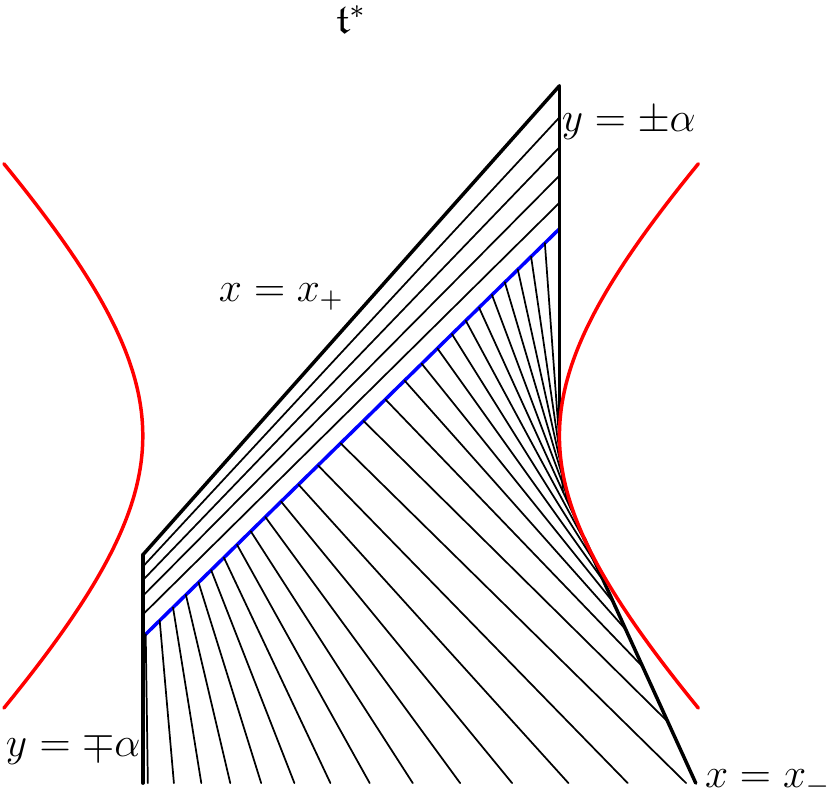}
	\caption{The momentum image of Riemannian Kerr. The conic $\mathcal C$ is drawn in red, and $\{x=\infty\}$ is drawn in blue. The exterior region is shaded with level sets of $x$, while the interior region is shaded with level sets of $y$.}
	\label{figMomKerr}
\end{figure}

\section{Classification of complete regular ambitoric $4$-orbifolds}

In this section, we will classify regular ambitoric $4$ orbifolds with some completeness assumptions. The idea is to focus on the open dense set where the local classification, theorem \ref{thmAmbitoric}, holds, with the intention of recovering the global structure by studying the asymptotic behaviour. In order to get global results from the local classification, we will first study how to glue together the local pieces. This is done by studying the gauge group of regular ambitoric structures.

\subsection{Gauge group for ambitoric ansatz spaces}\label{secGauge}
	
	The local form of a regular ambitoric $4$-manifold given in Theorem \ref{thmAmbitoric} is not unique. To study the flexibility in the local form, let $U$ be a neighbourhood of a point in a regular ambitoric $4$-manifold $(M,[g_0],J_+,J_-,\mathbb T)$ which is small enough so that the theorem applies to find the data 
	$$\{x,y, q(z), d\tau_0,d\tau_1,d\tau_2,A(z),B(z)\}.$$ 
	Another application of the theorem could give a different set of data
	$$\{\tilde x,\tilde y, \tilde q(z), d\tilde\tau_0, d\tilde\tau_1, d\tilde\tau_2, \tilde A(z),\tilde B(z)\}$$
	over $U$.
	
	\begin{lemma}\label{lemRatLin}
		$\tilde x$ is a rational-linear function of $x$ and $\tilde y$ is a rational-linear function of $y$.
	\end{lemma}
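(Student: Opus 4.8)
My plan has two stages: first to show that $\tilde x$ depends only on $x$ and $\tilde y$ only on $y$, and then to show that these one-variable reparametrizations are fractional-linear. For the first stage I would pass to the orbit space $\mathring M/\mathbb T$ and exploit the intrinsic splitting furnished by $-J_+J_-$. Each $K\in\mathfrak t$ satisfies $J_\pm K=-\grad_{g_\pm}\mu^\pm_K$, and the conditions $J_+K=\pm J_-K$ cut out the eigen-directions $\xi_{\mathring M},\eta_{\mathring M}\subset\mathfrak t_{\mathring M}$; a short computation in the local form of Theorem \ref{thmAmbitoric} shows that their horizontal images under $J_+$ are the mutually orthogonal line fields $\Span\{\grad x\}$ and $\Span\{\grad y\}$ (indeed the two relevant moment components equal $x$ and $y$). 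Since the $+1$/$-1$ eigenbundle labels of $-J_+J_-$ are canonical, the tilded data produce exactly the same two foliations, now read off as $\{\tilde x=\text{const}\}$ and $\{\tilde y=\text{const}\}$. Regularity ($d\xi\wedge d\eta\neq0$) makes the two foliations transverse, so $\tilde x=F(x)$ and $\tilde y=G(y)$ for one-variable functions $F,G$.

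For the second stage I would compare a single intrinsic scalar, the conformal factor $f=q(x,y)/(x-y)$ with $g_-=f^2g_+$. Since $g_\pm$ are each determined by $([g],J_\pm)$ up to homothety, $f$ is intrinsic up to a positive constant, so $\tilde f=cf$, i.e.
\[
\tilde q\big(F(x),G(y)\big)\,(x-y)=c\,q(x,y)\,\big(F(x)-G(y)\big).
\]
The fold $Z_+=\{f=\infty\}=\{x=y\}$ is intrinsic, and matching it forces $F=G=:\phi$ with $\phi$ injective. Letting $x\to y$ in the displayed identity yields the separable relation $\tilde q(\phi)=c\,q\,\phi'$, from which $\phi$ carries each root of $q$ to a root of $\tilde q$ (evaluate at $q(y)=0$). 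Passing to Möbius coordinates $w,\tilde w$ adapted to these roots, the relation integrates to $\tilde w=Cw^{k}$ (in the parabolic double-root case to an affine, hence automatically Möbius, map), where $k$ is a fixed ratio built from the leading coefficients, the root separations, and $c$.

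The remaining point, which I expect to be the main obstacle, is to prove $k=\pm1$; this is exactly where the full, off-diagonal content of the functional equation is indispensable, since the diagonal limit alone admits spurious fractional-power solutions $\tilde w=Cw^{k}$. I would reinsert this candidate into the full identity $\tilde q(\phi(x),\phi(y))(x-y)=c\,q(x,y)(\phi(x)-\phi(y))$: in root-adapted coordinates it collapses to an equality of the form $(w^{k}+\tilde w^{k})(\cdots)=c\,(\cdots)(w^{k}-\tilde w^{k})$, whose monomials can agree for all $w,\tilde w$ only when $k=\pm1$. (Equivalently, expanding at $x=y+\varepsilon$ identifies the Schwarzian $S\phi$ with a multiple of $(c^2\kappa-\tilde\kappa)/q^2$, where $\kappa=q_1^2-q_0q_2$ and $\tilde\kappa$ is its analogue for $\tilde q$; compatibility with the power-map form then forces $S\phi\equiv0$.) Either route gives that $\phi$ is fractional-linear, so that $\tilde x=\phi(x)$ and $\tilde y=\phi(y)$ are rational-linear in $x$ and $y$ respectively.
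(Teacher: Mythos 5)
Your stage 1 (that $\tilde x$ depends only on $x$ and $\tilde y$ only on $y$, read off from the eigenbundles of $-J_+J_-$) is correct and is essentially the paper's first step. The genuine gap is in stage 2. The two coordinate systems are only being compared on a neighbourhood $U$ of a point where Theorem \ref{thmAmbitoric} applies, and on any such domain the forms in (\ref{eqnAmbiStructure}) force $(x-y)q(x,y)\neq 0$; indeed the ambitoric ansatz spaces exclude this locus by definition. Hence the fold $Z_+=\{x=y\}$ does not meet $U$, so ``matching the fold'' cannot be invoked to conclude $F=G$, and the diagonal limit $x\to y$ that produces your separable relation $\tilde q(\phi)=c\,q\,\phi'$ is not available either: the $(x,y)$-image of $U$ is a connected open set disjoint from the diagonal and may lie entirely in, say, $\{x>y+1\}$, with the ranges of $x$ and $y$ disjoint intervals. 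Since $F$ and $G$ are a priori only smooth, not rational or analytic, you cannot continue the functional equation up to the diagonal to extract this ODE. (Separately, your final step forcing $k=\pm 1$ is only sketched; and note that $F=G$ is not actually needed for this lemma --- in the paper it is a later, separate lemma proved by different means, after rational-linearity is already known.)

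The fix is short, and it starts from the displayed identity you already have: fix $y=y_0$ in the range of $y$ with $q(y_0)\neq 0$ (possible since $q$ has at most two roots and $y$ ranges over an interval). Then
$$c\,\frac{q(x,y_0)}{x-y_0}=\frac{\tilde q\big(F(x),G(y_0)\big)}{F(x)-G(y_0)},$$
and since the polarization $q(x,y_0)$ is affine in $x$, the left-hand side is a fractional-linear function of $x$ (invertible because its determinant is $-q(y_0)\neq 0$); likewise the right-hand side is a fractional-linear function of $F(x)$, non-constant and hence invertible. Solving exhibits $F$ as a composition of a M\"obius map with the inverse of a M\"obius map, hence rational-linear, and the same argument applies to $G$. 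This is exactly the paper's proof; it never goes near the diagonal, which is what makes it work on an arbitrary local chart.
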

	\begin{proof}
		From the way that the coordinates are constructed in theorem \ref{thmAmbitoric}, $dx$ and $d\tilde x$ are sections of the line bundle $\xi^*$, so that $\tilde x$ is a function of $x$. Similarly  $\tilde y$ is a function of $y$. From \cite{apostolov2013ambitoric}, we know that $f(x,y)=\frac{q(x,y)}{x-y}$. Seeing how $f$ is represented in these two different coordinates gives
		$$\frac{q(x,y)}{x-y}
		=\frac{\tilde q(\tilde x,\tilde y)}{\tilde x-\tilde y}.$$
		Fix a real number $y_0$. Then we have that 
		$$\frac{q(x,y_0)}{x-y_0}
		=\frac{\tilde q(\tilde x,\tilde y(y_0))}{\tilde x-\tilde y(y_0)}.$$
		The left hand side of this equality is a rational-linear function of $x$, since $q(x,y_0)$ is a linear function of $x$. Similarly, the right hand side is a rational-linear function of $\tilde x$. Thus we can solve the equation to find that $\tilde x$ is a rational-linear function of $x$. A similar argument shows that $\tilde y$ is rational-linear function of $y$.
	\end{proof}
	
	\begin{lemma}\label{lemGaugeEqns}
		$$\frac{(\tilde x-\tilde y)^2}{(x-y)^2}=\frac{d\tilde x}{dx}\frac{d\tilde y}{dy}, \qquad \frac{d\tilde x}{dx} = \frac{\tilde q(\tilde x)}{q(x)},
		\qquad \frac{d\tilde y}{dy} = \frac{\tilde q(\tilde y)}{q(y)}.$$
	\end{lemma}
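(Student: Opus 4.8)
The plan is to extract everything from the single functional identity already used in the proof of Lemma \ref{lemRatLin}, namely
\[
f = \frac{q(x,y)}{x-y} = \frac{\tilde q(\tilde x,\tilde y)}{\tilde x - \tilde y},
\]
together with the fact (also from that lemma) that $\tilde x = \tilde x(x)$ depends only on $x$ and $\tilde y = \tilde y(y)$ only on $y$. First I would differentiate this identity separately in $x$ and in $y$. Using the elementary polarization identities $\partial_x\big(q(x,y)/(x-y)\big) = -q(y,y)/(x-y)^2$ and $\partial_y\big(q(x,y)/(x-y)\big) = q(x,x)/(x-y)^2$ — here $q(x,x)$ is the diagonal value, i.e.\ the one-variable quadratic $q(z)$ evaluated at $z=x$, which is what $q(x)$ abbreviates — the chain rule (together with the corresponding identities for $\tilde q$) produces
\begin{align*}
\frac{q(y,y)}{(x-y)^2} &= \frac{\tilde q(\tilde y,\tilde y)}{(\tilde x-\tilde y)^2}\,\frac{d\tilde x}{dx}, \\
\frac{q(x,x)}{(x-y)^2} &= \frac{\tilde q(\tilde x,\tilde x)}{(\tilde x-\tilde y)^2}\,\frac{d\tilde y}{dy}.
\end{align*}

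Solving each of these for $(\tilde x-\tilde y)^2/(x-y)^2$ and equating gives $\frac{q(x,x)}{\tilde q(\tilde x,\tilde x)}\frac{d\tilde x}{dx} = \frac{q(y,y)}{\tilde q(\tilde y,\tilde y)}\frac{d\tilde y}{dy}$, which is a separation of variables: the left side is a function of $x$ alone and the right a function of $y$ alone, so both equal a constant $c$. This immediately yields $\frac{d\tilde x}{dx} = c\,\tilde q(\tilde x)/q(x)$ and $\frac{d\tilde y}{dy} = c\,\tilde q(\tilde y)/q(y)$, and substituting back into either displayed relation gives $\frac{(\tilde x-\tilde y)^2}{(x-y)^2} = c\,\frac{\tilde q(\tilde x)\tilde q(\tilde y)}{q(x)q(y)}$. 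Comparing this with the product $\frac{d\tilde x}{dx}\frac{d\tilde y}{dy} = c^2\,\frac{\tilde q(\tilde x)\tilde q(\tilde y)}{q(x)q(y)}$ shows that all three claimed equations hold precisely when $c=1$.

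The crux — and the step I expect to be the real obstacle — is therefore pinning down $c=1$. The tool I would use is the discriminant identity relating a quadratic to its polarization,
\[
q(x,x)\,q(y,y) - q(x,y)^2 = -\tfrac12\langle q,q\rangle\,(x-y)^2,
\]
together with its counterpart for $\tilde q$. Dividing by $q(x,y)^2$ and using $q(x,y)/(x-y)=f$ gives $\frac{q(x,x)q(y,y)}{q(x,y)^2} = 1 - \frac{\langle q,q\rangle}{2f^2}$, and likewise $\frac{\tilde q(\tilde x,\tilde x)\tilde q(\tilde y,\tilde y)}{\tilde q(\tilde x,\tilde y)^2} = 1 - \frac{\langle\tilde q,\tilde q\rangle}{2f^2}$, where I have used that $\tilde f = f$ is the \emph{same} function. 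Since the basic identity also gives $(\tilde x-\tilde y)^2/(x-y)^2 = \tilde q(\tilde x,\tilde y)^2/q(x,y)^2$, the constant becomes
\[
c = \frac{1 - \tfrac{1}{2f^2}\langle q,q\rangle}{1 - \tfrac{1}{2f^2}\langle\tilde q,\tilde q\rangle}.
\]
Finally I would note that on the regular locus $x$ and $y$ are functionally independent, and $q(x,y)$ is symmetric while $(x-y)$ is antisymmetric, so $f$ cannot be a nonzero constant; hence $1/f^2$ takes infinitely many values. For a constant $c$ this forces both $\langle q,q\rangle = \langle\tilde q,\tilde q\rangle$ and $c=1$, which finishes the proof. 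The equality $\langle q,q\rangle = \langle\tilde q,\tilde q\rangle$ is a welcome byproduct, since it says the discriminant of $q$ is a gauge invariant, consistent with the homothety normalization of the discriminant used for the normal forms in Section \ref{secNorm}.
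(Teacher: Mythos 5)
Your proposal is correct, and its first half coincides with the paper's own argument: differentiating the gauge-invariance identity $f=\frac{q(x,y)}{x-y}=\frac{\tilde q(\tilde x,\tilde y)}{\tilde x-\tilde y}$ and using that $\tilde x$ depends only on $x$ and $\tilde y$ only on $y$ produces exactly the paper's two relations
\[
\frac{(\tilde x-\tilde y)^2}{(x-y)^2}=\frac{d\tilde x}{dx}\,\frac{\tilde q(\tilde y)}{q(y)}=\frac{d\tilde y}{dy}\,\frac{\tilde q(\tilde x)}{q(x)}.
\]
Where you genuinely diverge is in how you close the argument. The paper notes that both sides of these relations are products of a function of $x$ with a function of $y$, so $\frac{\partial^2}{\partial x\partial y}\log\bigl(\tfrac{(\tilde x-\tilde y)^2}{(x-y)^2}\bigr)=0$; computing this mixed derivative directly yields the first identity $\tfrac{(\tilde x-\tilde y)^2}{(x-y)^2}=\tfrac{d\tilde x}{dx}\tfrac{d\tilde y}{dy}$, and the remaining two follow by division. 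You instead separate variables to get $\tfrac{d\tilde x}{dx}=c\,\tilde q(\tilde x)/q(x)$ and $\tfrac{d\tilde y}{dy}=c\,\tilde q(\tilde y)/q(y)$ for a constant $c$, reduce the lemma to the single claim $c=1$, and pin $c$ down using the discriminant identity $q(x)q(y)-q(x,y)^2=-\tfrac12\langle q,q\rangle(x-y)^2$ together with the fact that $f$ cannot be constant (symmetric numerator versus antisymmetric denominator). Both arguments are sound. The paper's trick is shorter and purely differential, requiring no algebraic input beyond the relations themselves; your route costs the extra discriminant identity and the non-constancy observation, but buys a real byproduct: the gauge invariance $\langle q,q\rangle=\langle\tilde q,\tilde q\rangle$ of the discriminant, which the paper never states explicitly but which underlies the invariance of the parabolic/hyperbolic/elliptic trichotomy used for the normal forms in section \ref{secNorm}. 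One cosmetic caution: when you express $c$ as the ratio $\bigl(1-\tfrac{1}{2f^2}\langle q,q\rangle\bigr)\big/\bigl(1-\tfrac{1}{2f^2}\langle\tilde q,\tilde q\rangle\bigr)$ you divide by a quantity that vanishes on the (isolated) level sets where $\tilde q(\tilde x)\tilde q(\tilde y)=0$; it is cleaner to cross-multiply, obtaining $2(1-c)f^2=\langle q,q\rangle-c\langle\tilde q,\tilde q\rangle$ everywhere, from which non-constancy of $f^2$ forces $c=1$ exactly as you argue.
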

	\begin{proof}
		One can compute
		\begin{align*}
		df
		&=\frac{dx\big((x-y)(q_0y+q_1)-q(x,y)\big)
		+dy\big((x-y)(q_0x+q_1)+q(x,y)\big)}{(x-y)^2}\\
		&= \frac{-q(y)dx+q(x)dy}{(x-y)^2} 
		= \frac{-\tilde q(\tilde y)d\tilde x+\tilde q(\tilde x)d\tilde y}
		{(\tilde x-\tilde y)^2}
		\end{align*}
		Since $dx\wedge d\tilde x = 0 = dy \wedge d\tilde y$, the above equation can be rearranged to form
		$$\frac{(\tilde x-\tilde y)^2}{(x-y)^2}
		=\frac{d\tilde x}{dx}\frac{\tilde q(\tilde y)}{q(y)}
		=\frac{d\tilde y}{dy}\frac{\tilde q(\tilde x)}{q(x)}.$$
		Note that the terms in this equation are expressible as the product of functions of $x$ and $y$. Thus 
		$$0=\frac{\partial^2}{\partial x\partial y}
		\log\left(\frac{(\tilde x-\tilde y)^2}{(x-y)^2}\right) 
		= \frac2{(\tilde x-\tilde y)^2}\frac{d\tilde x}{dx}\frac{d\tilde y}{dy}
		- \frac2{(x-y)^2}.$$
		The two above equations can be combined to find
		\begin{align*}
		\frac{dx}{q(x)} = \frac{d\tilde x}{\tilde q(\tilde x)}, \qquad
		\frac{dy}{q(y)} = \frac{d\tilde y}{\tilde q(\tilde y)}.
		\end{align*}
	\end{proof}
	\begin{lemma}
		The rational-linear transformations $\tilde x(x)$ and $\tilde y(y)$ are the same, in the sense that if $\tilde x=\frac{ax+b}{cx+d}$, 
		then $\tilde y=\frac{ay+b}{cy+d}$.
	\end{lemma}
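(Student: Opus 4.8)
The plan is to use only the first identity from Lemma~\ref{lemGaugeEqns}, namely $\tfrac{(\tilde x-\tilde y)^2}{(x-y)^2}=\tfrac{d\tilde x}{dx}\tfrac{d\tilde y}{dy}$, and to read off from it the exact relation between the two sets of M\"obius data. By Lemma~\ref{lemRatLin} I may write $\tilde x=\frac{ax+b}{cx+d}$ and $\tilde y=\frac{a'y+b'}{c'y+d'}$, where $ad-bc\neq0$ and $a'd'-b'c'\neq0$ since both are genuine (invertible) coordinate changes. The goal is to show that $(a',b',c',d')$ is a scalar multiple of $(a,b,c,d)$, which is exactly the assertion that the two rational-linear maps agree.

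First I would record $\tfrac{d\tilde x}{dx}=\frac{ad-bc}{(cx+d)^2}$ and $\tfrac{d\tilde y}{dy}=\frac{a'd'-b'c'}{(c'y+d')^2}$, and write the difference as $\tilde x-\tilde y=\frac{N(x,y)}{(cx+d)(c'y+d')}$ with bilinear numerator $N(x,y)=(ac'-a'c)xy+(ad'-b'c)x+(bc'-a'd)y+(bd'-b'd)$. Substituting these into the identity, the factors $(cx+d)^2$ and $(c'y+d')^2$ cancel, leaving the polynomial identity $N(x,y)^2=(ad-bc)(a'd'-b'c')(x-y)^2$, which holds on a nonempty open set and hence everywhere. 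Therefore $N(x,y)=\lambda(x-y)$ for a constant $\lambda$ with $\lambda^2=(ad-bc)(a'd'-b'c')\neq0$.

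Matching coefficients in $N(x,y)=\lambda x-\lambda y$ then yields the four scalar relations $ac'-a'c=0$, $bd'-b'd=0$, $ad'-b'c=\lambda$ and $bc'-a'd=-\lambda$. The first two assert that $(a,c)$ is parallel to $(a',c')$ and that $(b,d)$ is parallel to $(b',d')$; since $ad-bc\neq0$ forces both columns $(a,c)$ and $(b,d)$ to be nonzero, I may write $(a',c')=\mu_1(a,c)$ and $(b',d')=\mu_2(b,d)$.

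The only step where the argument could genuinely fail is showing that these two scalings coincide, and this is where the remaining two relations are used: substituting the parallelism expressions into $ad'-b'c=\lambda$ and $bc'-a'd=-\lambda$ gives $\mu_2(ad-bc)=\lambda$ and $\mu_1(ad-bc)=\lambda$, so that $\mu_1=\mu_2$ since $ad-bc\neq0$. Hence $(a',b',c',d')=\mu_1(a,b,c,d)$, i.e. $\tilde y=\frac{ay+b}{cy+d}$, which is the claim. I expect no serious obstacle beyond this bookkeeping; it is worth noting that the two gauge identities involving $\tilde q$ play no role here, the lemma following from the single Jacobian equation alone.
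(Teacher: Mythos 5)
Your proof is correct and follows essentially the same route as the paper: both start from the single Jacobian identity $\tfrac{(\tilde x-\tilde y)^2}{(x-y)^2}=\tfrac{d\tilde x}{dx}\tfrac{d\tilde y}{dy}$ of Lemma~\ref{lemGaugeEqns}, substitute the M\"obius forms, and match coefficients of the bilinear numerator $(ac'-a'c)xy+(ad'-b'c)x+(bc'-a'd)y+(bd'-b'd)=\lambda(x-y)$ to force proportionality of $(a',b',c',d')$ and $(a,b,c,d)$. The only cosmetic difference is that the paper normalizes both determinants to $\pm1$ and concludes the matrices agree up to sign, whereas you keep general scalars $\mu_1,\mu_2$ and show they coincide, which amounts to the same bookkeeping.
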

	\begin{proof}
		Let $\tilde x=\frac{ax+b}{cx+d}$ and $\tilde y=\frac{a'y+b'}{c'y+d'}$. We compute
		$$\frac{d\tilde x}{dx} = \frac{ad-bc}{(cx+d)^2}, \qquad 
		\frac{d\tilde y}{dy} = \frac{a'd'-b'c'}{(c'y+d')^2}.$$
		From lemma \ref{lemGaugeEqns}, $\frac{(\tilde x-\tilde y)^2}{(x-y)^2}=\frac{d\tilde x}{dx}\frac{d\tilde y}{dy}$. Plugging in the expressions for $\tilde x(x),\tilde y(x)$ and their derivatives yields
		$$\left(\frac{(ax+b)(c'y+d')-(cx+d)(a'y+b')}{(x-y)(cx+d)(c'y+d')}\right)^2
		=\frac{(ad-bc)(a'd'-b'c')}{(cx+d)^2(c'y+d')^2}.$$
		Without loss of generality, we may assume that $ad-bc$ and $a'd'-b'c'$ are $\pm 1$. The above equation then yields
		\begin{align*}
		ad-bc &= a'd'-b'c', \\
		(ax+b)(c'y+d')-(cx+d)(a'y+b') &= \pm (x-y),
		\end{align*}
		which implies
		\begin{align*}
		ac' = a'c, \qquad ad'-a'd=b'c-bc',\qquad bd'=bd'.
		\end{align*}
		These relations imply that 
		$\begin{pmatrix} a' & b' \\ c' & d' \end{pmatrix} 
		= \pm\begin{pmatrix} a & b \\ c & d \end{pmatrix}$,
		so that $\tilde y = \frac{a'y+b'}{c'y+d'}=\frac{ay+b}{cy+d}$. 
	\end{proof}

	The above lemmas allow us to identify the gauge group of coordinate transformations which preserve the regular ambitoric structure on the set of generic torus orbits with $\mathbb P\SL_2\mathbb R$, where an element $\left[\begin{pmatrix}a& b \\ c & d\end{pmatrix}\right]\in\mathbb P\SL_2\mathbb R$ induces the coordinate transformation $\tilde x = \frac{ax+b}{cx+d}, \tilde y = \frac{ay+b}{cy+d}$. Note that the gauge group was already known in \cite{apostolov2013ambitoric} to be $\mathbb P\SL_2\mathbb R$ by construction, but it is not expressed in local coordinates.
	
	\begin{ex}\label{exInfExtension}
		Consider the ambitoric ansatz space $$\mathcal A:=\mathcal A(q,x^4+1,y^4+1,\mathbb T).$$ Consider the change of gauge $\tilde x = -\frac1x, \tilde y=-\frac1y$ which transforms $\mathcal A$ to $$\tilde{\mathcal A}:=\mathcal A(\tilde q,\tilde A(\tilde x),\tilde B(\tilde y)).$$ Since $\frac{\partial}{\partial x}$ and $\frac{\partial}{\partial\tilde x}$ span the same line bundle in $T\mathcal A$ (namely $\xi$), we can write $g_0$ restricted to this line bundle in both gauges as
		$$g_0|_\xi = \frac{dx^2}{A(x)} = \frac{d\tilde x^2}{\tilde{A}(\tilde{x})}.$$
		This allows us to compute
		$$\tilde{A}(\tilde{x}) = A(x)\left(\frac{d\tilde x}{dx}\right)^2=(x^4+1)\left(\frac{-1}{x^2}\right)^2=\tilde x^4+1.$$
		Similarly, one can compute $\tilde B(\tilde y)=\tilde y^4+1$. The image of this gauge transformation is $\{\tilde x\tilde y\neq 0\}\cap\tilde{\mathcal A}$, but our computations show that $\tilde A(\tilde x)$ and $\tilde B(\tilde y)$ extend to smooth positive functions on $\{\tilde x\tilde y=0\}$. Thus the ambitoric structure extends to the points in $\{\tilde x\tilde y=0\}$ where $(\tilde x-\tilde y)\tilde q(\tilde x,\tilde y)$ does not vanish. In terms of the original gauge, we find that $\{x=\infty\}$ and $\{x=-\infty\}$ (respectively $\{y=\infty\}$ and $\{y=\infty\}$) glue together, with the ambitoric structure induced by the inverse gauge transformation from $\{\tilde x\tilde y=0\}$.
	\end{ex}
	
\subsection{Asymptotic analysis of ambitoric ansatz spaces}\label{secCompletions}

For a regular ambitoric $4$-manifold $M$, there is a dense open subset $\mathring M$ where the torus action is free. Since $\mathring M$ is dense, we can study $M\backslash\mathring M$ by studying the asymptotics of $\mathring M$ with respect to some metric which makes $M$ complete.  Theorem \ref{thmAmbitoric} tells us that $\mathring M$ locally looks like an ambitoric ansatz space. Since the asymptotic behaviour is local, understanding the asymptotics of ambitoric ansatz spaces will allow us to understand the asymptotics of $\mathring M$.

Let $U$ be an open subset of an ambitoric ansatz space $\mathcal A:= \mathcal A(q,A,B,\mathbb T)$. We will study the asymptotics of $U$ by considering its Cauchy completion. However, the Cauchy completion depends on the choice of metric, and the ambitoric structure gives a whole conformal class of choices. The metrics that we will study are the barycentric metric $g_0$, the K\"ahler metrics $g_\pm=f^{\mp 1} g_0$, and the metrics $g_p:=\frac{(x-y)q(x,y)}{p(x,y)^2}g_0$ parametrized by a quadratic polynomial $p$ orthogonal to $q$. By theorem \ref{thmDiagRicci}, the metrics $\{g_p\}_{p\perp q}$ are precisely the $\mathfrak t$-invariant metrics in $[g]$ with diagonal Ricci-tensor. Let $$g\in\{g_0,g_\pm\}\cup\{g_p\}_{p\perp q}.$$ We will study the Cauchy completion of $U$ with respect to $g$. This will be denoted by $U^{g}_C$, following the notation used in the appendix. We are interested in the case when $U$ is identified with a subset of a complete orbifold, so the following definition is natural:

\begin{defn}
 $U$ is $g$-\emph{completable} if its $g$-Cauchy completion $U_C^g$ is an orbifold which is Riemannian with respect to a smooth extension of $g$.
\end{defn}

We will focus on the case when $U=\mathcal A$. We will use the appendix to relate $\mathcal A_C^{g}$ to the Cauchy completion with respect to a more convenient metric. In particular, consider the embedding $\mathcal A\hookrightarrow(\mathbb{RP}^1)^2\times\mathbb T$ formed by identifying $\mathbb {RP}^1$ as $\mathbb R\cup\{\infty\}$ for each of the $x$ and $y$ coordinate axes in $\mathcal A$. Let $g_f$ be the flat metric on $(\mathbb{RP}^1)^2\times\mathbb T$. Since
\begin{align*}
	\mathcal A=
	\bigg\{\big(x,y,\vec t\big)\in(\mathbb R)^2\times \mathbb T:
	A(x)>0,B(y)>0,q(x,y)(x-y)\neq 0\bigg\},
\end{align*}

We find that
\begin{align*}
	\mathcal A_C^{g_f}=
	\overline{\left\{x\in\mathbb RP^1:
	A(x)> 0\right\}}
	\times\overline{\left\{y\in\mathbb RP^1:
	B(y)> 0\right\}}\times\mathbb T,
\end{align*}
where $\overline{\cdot}$ is the closure in $\mathbb {RP}^1$ with the flat topology.
We see that $\partial^{g_f}\mathcal A:=\mathcal A^{g_f}_C\backslash\mathcal A$ can be decomposed into components of two types: level sets of $x$ or $y$, or components of the vanishing locus of $(x-y)q(x,y)$. Since the moment map sends level sets of $x$ or $y$ to lines, we call these components \emph{edges}. We call components of the vanishing locus of $(x-y)q(x,y)$ as \emph{folds}. We will call a fold negative if $f=\frac{q(x,y)}{x-y}$ vanishes on it, and positive if $\frac1f$ vanishes on it, analogous to the folding hypersurfaces $Z_\pm$ from section \ref{secMomFold}. A $\mathbb T$-orbit where a pair of edges or folds meet is called a \emph{corner}. A fold which is not an edge or a corner is called a \emph{proper fold}. Since we are treating corners separately, when we refer to an edge or a proper fold we will mean its interior and not include the adjacent corners.

Since $\mathcal A_C^{g_f}$ is compact, by proposition \ref{propCauchyComparisons} in the appendix, every point $p_1$ in $\mathcal A_C^{g}$ can be represented by a point $p_2$ in $\mathcal A_C^{g_f}$, in the sense that there exists a curve in $\mathcal A$ which converges to $p_1$ in $\mathcal A_C^{g}$ and $p_2$ in $\mathcal A_C^{g_f}$. Note that this correspondence between $p_1\in \mathcal A_C^{g}$ and $p_2\in \mathcal A_C^{g_f}$ is not bijective. For example, let's consider the case where $U=\mathcal A$. If $p_2$ lies on $x=\infty$, then $p_2$ may be represented by two curves $c_\pm(t)$, with $\lim_{t\to\infty}x\circ c_\pm(t)=\pm\infty$. Then $c_\pm$ could not both represent the same point in $\mathcal A_C^{g}$, where $x=\infty$ and $x=-\infty$ are not identified. As another example, if $\mathbb T$ does not act freely at $p_1$, then there are multiple choices for $p_2$, since $\mathbb T$ acts freely on $\mathcal A_C^{g_f}$. We will use this correspondence to describe points in $U_C^{g}$ by the coordinates on $\mathcal A_C^{g_f}$, keeping in mind that on $U_C^{g}$ these may not be coordinate functions and may be multi-valued. 

The correspondence between $\mathcal A_C^{g}$ and $\mathcal A_C^{g_f}$ allows us to decompose $\partial^{g}\mathcal A$ into components corresponding to the components of $\partial^{g_f}\mathcal A$, and we will use the same language to refer to them: folds, edges, and corners. Note that there may be components of $\mathcal A_C^{g_f}$ which are not represented in $\mathcal A_C^{g}$. We will refer to these components as \emph{infinitely distant} with respect to $g$. Formally these lie in the Busemann completion (see lemma \ref{lemCauchyInBusemann}), but we will not dwell on this fact. In the case where $g=g_p$, the vanishing locus of $p(x,y)$ forms an additional boundary component, which we will refer to as $P$. The \emph{proper part} of $P$ is the subset of $P$ of all points which are not folds or edges. 

We will now see what we can deduce from the assumption that $\mathcal A$ is $g$-completable, discussing each type of boundary component in turn.

\subsubsection{Folds}
We start with a long lemma. This argument is much more involved than the others in the section.

\begin{lemma}\label{lemFoldComp}
	If $\mathcal A$ is $g$-completable, then every proper fold is infinitely distant. 
\end{lemma}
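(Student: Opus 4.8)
The plan is to argue by contraposition: assuming $\mathcal A$ is $g$-completable, I will show that a proper fold cannot lie at finite $g$-distance. The central observation is that if $\mathcal A_C^g$ is a smooth Riemannian orbifold, then the isometric $\mathbb T$-action on $\mathcal A$ extends to a smooth isometric action on $\mathcal A_C^g$ (isometries extend to metric completions, and a continuous isometric action on a Riemannian orbifold is smooth). Consequently each generator $K\in\mathfrak t$ extends to a smooth Killing field, so $|K|_g$ is continuous on $\mathcal A_C^g$, hence bounded near any point of the completion. The strategy is therefore to exhibit a generator whose $g$-norm blows up as one approaches a proper fold, contradicting continuity at the hypothetical limit point.

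First I would compute the norm. Using the normal forms of section \ref{secNorm} (or directly (\ref{eqnAmbiStructure})), the restriction of $g_0$ to the torus directions is a quadratic form whose Gram matrix $G_{g_0}$ has determinant $\tfrac{A(x)B(y)}{((x-y)q(x,y))^2}$, with largest eigenvalue growing like $((x-y)q(x,y))^{-2}$ and smallest eigenvalue bounded. Writing $w:=(x-y)q(x,y)$ and $g=\phi\,g_0$ with conformal factor $\phi\in\{1,f^{-1},f,w/p^2\}$ (corresponding to $g_0$, $g_+$, $g_-$, $g_p$), the largest eigenvalue of the torus Gram matrix of $g$ grows like $\phi\,w^{-2}$. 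On a proper positive fold $w\sim q_0\,(x-y)$ with $q_0\neq 0$, and on a proper negative fold $w\sim u_0\,q(x,y)$ with $u_0=x_0-y_0\neq0$; in either case a short case-check over $\phi\in\{1,f^{-1},f,w/p^2\}$ shows $\phi\,w^{-2}\to\infty$. Hence there is a fixed generator $K^*\in\mathfrak t$, an eigenvector of the limiting torus direction, with $|K^*|_g\to\infty$ along any approach to the fold.

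Then I would close the argument. If a proper fold is not infinitely distant, then by definition some point $p$ of it lies in $\mathcal A_C^g$, so there is a $d_g$-Cauchy curve in $\mathcal A$ converging to $p$; by the comparison with $\mathcal A_C^{g_f}$ in proposition \ref{propCauchyComparisons} this curve can be taken to genuinely approach the fold, i.e. to satisfy $w\to0$. Along it $|K^*|_g\to\infty$, contradicting the continuity of $|K^*|_g$ at the smooth orbifold point $p$. This forces every proper fold to be infinitely distant.

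The delicate points are the bookkeeping tying ``not infinitely distant'' to an honest finite-length approach with $w\to0$ (this is exactly where the appendix's comparison of $\mathcal A_C^g$ with the flat completion $\mathcal A_C^{g_f}$ is essential, since the correspondence is neither injective nor a coordinate identification), and the degenerate subcase $g=g_p$ with $p$ vanishing on the fold, where $\phi=w/p^2$ is no longer controlled by $w$ alone and the blow-up rate must be re-examined. I expect the hardest part to be making the norm estimate uniform along an \emph{arbitrary} approaching curve --- including curves that wind around $\mathbb T$ while $w\to0$ --- rather than along the single transverse ray where the computation is transparent; this is presumably why the proof of this lemma is longer than the others in the section.
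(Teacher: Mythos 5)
Your proposal is correct, but it takes a genuinely different route from the paper's. Both arguments begin with the same Gram-matrix computation for the torus directions, but you extract the \emph{largest eigenvalue} (the $g$-norm of one fixed generator $K^*$), whereas the paper extracts the \emph{determinant}, i.e.\ the orbit volume $\Vol_g\left(\mathbb T_{(x,y)}\right)$, tabulated as $\frac{A(x)B(y)}{q(x,y)^4}$, $\frac{A(x)B(y)}{(x-y)^2q(x,y)^2}$, etc. The difference matters: the volume stays \emph{bounded} exactly when the metric is matched to the fold (e.g.\ $g_+$ at a positive fold), so volume blow-up alone cannot finish the paper's proof; the paper must add three further steps --- a distance-collapse argument showing the whole proper fold becomes a single $\mathbb T$-orbit in $\mathcal A_C^{g}$, the resulting constancy of the limiting volume forcing $q(x,y)^4\propto A(x)B(y)$ (possible only in the parabolic case, where the fold is an edge), and a separate curvature estimate $\|\Rm_{g_p}\|_{g_p}\gtrsim\frac{A+B}{(x-y)q(x,y)}$ for the residual case $g=g_p$ with $p$ constant. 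Your quantity $\phi\,w^{-2}$ instead diverges in \emph{all} cases (both fold signs and all four metric families; in the subcase you flag, $g=g_p$ with $p$ vanishing at the fold point, $1/(wp^2)$ only diverges faster), so a single uniform contradiction replaces the entire case analysis, the collapse argument, and the curvature computation. What your approach costs is the "central observation": that the isometric $\mathbb T$-action extends \emph{smoothly} to the orbifold completion, so that $|K^*|_g$ extends continuously. That assertion is true but rests on orbifold versions of Myers--Steenrod and Bochner--Montgomery (the isometry group of a Riemannian orbifold is a Lie group acting smoothly), which you should cite; note, though, that the paper's own opening claim --- that orbits in $F$ "must have well-defined $g$-volume" --- implicitly requires the same kind of regularity of the extended action and is not justified there either, so the foundational burden is comparable and you are merely more explicit about it. Two small remarks: your concern about curves winding around $\mathbb T$ is moot, since $|K^*|_g$ is $\mathbb T$-invariant and your lower bound depends only on $(x,y)$; and a fold point lying at infinity in $(\mathbb{RP}^1)^2$ should first be brought to finite coordinates by a gauge transformation, as the paper does in analogous situations.
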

\begin{proof}
	Let $F$ be a proper fold which is not infinitely distant. The $\mathbb T$-orbits in $F$ must have well-defined $g$-volume. In \cite{apostolov2013ambitoric2}, the $g_+$ volume of the fibre at $(x,y)$ is shown to be proportional to $\frac{A(x)B(y)}{q(x,y)^4}$. The volumes (upto a constant) of such a fibre with respect to each of the candidate metrics for $g$ are given in the following table:
		
	\begin{tabular}[h]{c||c|c|c|c}
		$g$ & $g_+$ & $g_0$ & $g_-$ & $g_p$ \\
		\hline
		$\Vol_{g}\left(\mathbb T_{(x,y)}\right)$ & $\frac{A(x)B(y)}{q(x,y)^4}$ & $\frac{A(x)B(y)}{(x-y)^2q(x,y)^2}$ & $\frac{A(x)B(y)}{(x-y)^4}$ & $\frac{A(x)B(y)}{p(x,y)^4}$
	\end{tabular}

	If $g$ is $g_0$, we find that $\frac{A(x)B(y)}{(x-y)^2q(x,y)^2}$ must be well-defined on $F$. But $F$ is not a corner or an edge, so $A(x)$ and $B(y)$ are positive functions on $F$. Thus we must have that $(x-y)^2q(x,y)^2$ is non-vanishing on $F$. This contradicts $F$ being a fold, so that $g$ cannot be $g_0$. Similar arguments show that if $F$ is positive, then $g\neq g_-$, and if $F$ is negative, then $g\neq g_+$. It follows that if one writes $g=\phi(x,y)g_0$, then $\phi(x,y)$ vanishes on $F$.
	
	Now we will show that the $g$-distance between $\mathbb T$-orbits in $F$ is zero. To see this, let $(x_1,y_1,\vec t_0)$ and $(x_2,y_2,\vec t_0)$ be two different points in $\mathcal A^{g_f}_C\cap F$. Consider the curve $\gamma$ with image in $\mathcal A^{g_f}_C\cap F\cap\{\vec t=\vec t_0\}$ connecting $(x_1,y_1,\vec t_0)$ to $(x_2,y_2,\vec t_0)$. Let $I_x$ and $I_y$ be the ranges of the functions $x$ and $y$ respectively restricted to the image of $\gamma$. Since $F$ is not a corner or an edge, we can find a $\delta>0$ small enough such that $A(x)$ and $B(y)$ are positive functions on the $\delta$-neighbourhoods (with respect to the metrics induced by $g_f$) $I_x^\delta$ and $I_y^\delta$ of $I_x$ and $I_y$ respectively. We will also choose $\delta$ small enough such that neither $I_x^\delta$ nor $I_y^\delta$ are dense subsets of $\mathbb{RP}^1$. Consider the set  $$V:=\{(x,y,\vec t)\in \mathcal A:x\in I_x^\delta, y\in I_y^\delta, \vec t = \vec t_0\}.$$
	Since neither $I_x^\delta$ nor $I_y^\delta$ are dense subsets of $\mathbb{RP}^1$, the metric induced by $g_f$ on $V$ is uniformly equivalent to $dx^2+dy^2$. The metric induced by $g_0$ on $V$ is $\frac{dx^2}{A(x)}+\frac{dy^2}{B(y)}$. Since $A(x)$ and $B(y)$ are both positive on $V$, $g_0$ is uniformly equivalent to $dx^2+dy^2$, and hence $g_f$, on $V$. Thus there exists some $C>0$ such that $g_0<Cg_f$ on $V$.
	
	Let $\epsilon>0$. Since $\phi(x,y)$ vanishes on $F$, we can approximate $\gamma$ with a curve $\gamma_\epsilon$ connecting $(x_1,y_1,\vec t_0)$ to $(x_2,y_2,\vec t_0)$ with the interior of the image contained in $V$  satisfying $\left|\length_{g_f}(\gamma)-\length_{g_f}(\gamma_\epsilon)\right|<\epsilon$ and $\phi(x,y)\circ\gamma^\epsilon<\epsilon$. Since $g=\phi(x,y)g_0$, we find
	\begin{align*}
	d_{g}\left(\mathbb T_{(x_1,y_1)},\mathbb T_{(x_2,y_2)}\right)
	&\leq\length_{g}(\gamma_\epsilon)
	<\epsilon\length_{g_0}(\gamma_\epsilon)\\
	&<C\epsilon\length_{g_f}(\gamma_\epsilon)
	<C\epsilon\left(\length_{g_f}(\gamma)+\epsilon\right).
	\end{align*}
	Taking $\epsilon\to 0$, we find that the $g$-distance between the $\mathbb T$-fibres $\mathbb T_{(x_1,y_1)}$ and $\mathbb T_{(x_2,y_2)}$ is zero. This means that $F$ represents only one $\mathbb T$-orbit in $\mathcal A^{g}_C$. In order for the $g$-volume of this orbit to be well-defined, we must have $\Vol_{g}(\mathbb T_{(x,y)})$ constant. In the case when $g=g_+$, this means that $q(x,y)^4\propto A(x)B(y)$. But it is only possible to write $q(x,y)^4$ as a product of a function of $x$ and a function of $y$ in the parabolic case, where $F$ is an edge. Similarly, we rule out the case where $g=g_-$ and $g=g_p$ unless $p$ is a constant.
	
	If $g=g_p$ for a constant $p$, then $A(x)$ and $B(y)$ are constants, say $A$ and $B$. We will rule out this case by a curvature computation. Since $(\mathcal A,g_-,\omega_-)$ is K\"ahler, we have that the anti-self dual part of the Weyl curvature tensor of $[g_-]$ is given by
	$$W_-\propto s_-(\omega_-\otimes\omega_-)_0^{\sharp_-},$$
	where $(\cdot)_0$ is the projection onto the trace-free part of $\Sym^2(\Lambda^2\mathcal A)$ and $s_-$ is the scalar curvature of $s_-$. Using $g_p=(x-y)^2g_-$, we estimate
	$$\|\Rm_{g_p}\|_{g_p}\geq\|W_-\|_{g_p}=\frac{p^2\|W_-\|_{g_-}}{(x-y)^2}\propto\frac{|s_-|}{(x-y)^2}.$$
	In \cite{apostolov2013ambitoric}, $s_-$ is computed to be
	$$s_- = -\frac{((x - y)^2,A(x))^{(2)}+((x- y)^2,B(y))^{(2)}}{(x-y)q(x,y)},$$
	where for arbitrary functions $f_1(z,w)$ and $f_2(z)$,
	$$(f_1(z,w),f_2(z))^{(2)}=
	f_1^2\frac{\partial}{\partial z}
	\left(f_1\frac{\partial}{\partial z}\left(\frac{f_2}{f_1^2}\right)\right) 
	= f_1\frac{d^2 f_2}{d z^2}
	-3\frac{\partial f_1}{\partial z}\frac{d f_2}{d z}
	+6\frac{\partial^2 f_1}{\partial z^2}f_2.$$ 
	In particular, since $A(x)$ and $B(y)$ are constants, we find that \newline
	$s_-=-2\frac{A+B}{(x-y)q(x,y)},$ so that $\|\Rm_{g_p}\|_{g_p}\gtrsim\frac{A+B}{(x-y)q(x,y)}$ on $\mathcal A$. Since $\|\Rm_{g_p}\|_{g_p}$ must be well-defined at the fold $F$, where $(x-y)q(x,y)$ vanishes, we must have $A+B=0$. This contradicts $A(x)$ and $B(y)$ being positive functions.
\end{proof}

We can use the following lemma to test the infinitely distant criterion on proper folds. 

\begin{lemma}\label{lemAsyLength}
 Let $g$ be a metric on a dense set of $\mathcal A$. Let $F$ be the vanishing locus of some function $\phi$ on $\mathcal A_C^{g_f}$. Assume that there exists some $r\in\mathbb R$ such that each point of $F$ has a neighbourhood where $\|d\phi\|_g\asymp{\phi^r}$. In other words, there exists some $C>0$ such that  $\frac1{C}\phi^r<\|d\phi\|_g< C{\phi^r}$ on the neighbourhood. Then $F$ is infinitely distant with respect to $g$ if and only if $r\geq 1$.
\end{lemma}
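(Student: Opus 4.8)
The plan is to translate the statement about the $g$-Cauchy completion into an estimate on the $g$-length of curves approaching $F$, and then to compare that length with the integral $\int_0^{\phi_0}\phi^{-r}\,d\phi$, whose convergence at the lower endpoint switches precisely at $r=1$. By the comparison results of the appendix (proposition \ref{propCauchyComparisons}, lemma \ref{lemCauchyInBusemann}), whether a point of $F$ lies in $\mathcal A_C^{g}$ is governed by whether it can be reached by a $g$-Cauchy curve, and it is convenient to work near $F$ in a sublevel collar $N=\{\phi<\phi_0\}$ on which the hypothesis $\frac1C\phi^r<\|d\phi\|_g<C\phi^r$ holds.

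The key pointwise inequality is obtained from Cauchy--Schwarz for $g$: along any curve $\gamma$, writing $\psi:=\phi\circ\gamma$, we have $|\dot\psi|=|d\phi(\dot\gamma)|\le\|d\phi\|_g\,\|\dot\gamma\|_g$, so that
\begin{align*}
	\length_g(\gamma)=\int\|\dot\gamma\|_g\,dt
	\ \ge\ \int\frac{|\dot\psi|}{\|d\phi\|_g\circ\gamma}\,dt
	\ >\ \frac1C\int\frac{|\dot\psi|}{\psi^{r}}\,dt
	\ \ge\ \frac1C\left|\int_{\epsilon}^{\phi_0}\psi^{-r}\,d\psi\right|,
\end{align*}
where the last step evaluates the integral for a monotone portion running from $\phi=\phi_0$ down to $\phi=\epsilon$, using $\|d\phi\|_g<C\phi^r$. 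In the opposite direction, the negative $g$-gradient trajectory $\dot\gamma=-\nabla_g\phi$ satisfies $\tfrac{d}{dt}\psi=-\|d\phi\|_g^2$ and $\|\dot\gamma\|_g=\|d\phi\|_g$, so reparametrising by $\phi$ and using $\|d\phi\|_g>\tfrac1C\phi^r$ gives $\length_g(\gamma)=\int_0^{\phi_0}\frac{d\phi}{\|d\phi\|_g}<C\int_0^{\phi_0}\phi^{-r}\,d\phi$.

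For $r<1$: since $\|d\phi\|_g\asymp\phi^r>0$ wherever $\phi>0$, the gradient trajectory is well defined, strictly decreases $\phi$ to $0$ while remaining in $N$, and has finite $g$-length $\le\frac{C}{1-r}\phi_0^{1-r}$. Hence it is a $g$-Cauchy curve whose $g_f$-limit is a point of $F$, so that point lies in $\mathcal A_C^{g}$ and $F$ is \emph{not} infinitely distant. For $r\ge1$, the evaluated integral diverges as $\epsilon\to0$: it is $\frac1C\cdot\frac1{r-1}\big(\epsilon^{1-r}-\phi_0^{1-r}\big)\to\infty$ when $r>1$, and $\frac1C\log(\phi_0/\epsilon)\to\infty$ in the borderline case $r=1$ (using the logarithmic primitive). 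Feeding this into the $g$-distance between level sets shows $d_g(\{\phi=\phi_0\},\{\phi=\epsilon\})\to\infty$, so no $g$-Cauchy sequence can converge to a point of $F$, and $F$ is infinitely distant.

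The main obstacle is that the asymptotic estimate only holds in a neighbourhood of $F$, so in the $r\ge1$ direction a competing path could in principle leave $N$ and re-enter, making the naive length bound insufficient to control the global distance $d_g$ (recall that a $g$-Cauchy sequence need only have consecutive distances tending to $0$, not a summable total length). I would resolve this by confining the relevant portion of any path to the collar: a $g$-Cauchy sequence approaching $p\in F$ is eventually trapped in $\{\phi<\phi_0\}$ by the $g_f$--$g$ localisation of proposition \ref{propCauchyComparisons}, and one then only needs the lower bound applied to the sub-path before it first exits the collar through $\{\phi=\phi_0\}$, which already forces $d_g$ between distant tail points to stay bounded below, contradicting the Cauchy property.
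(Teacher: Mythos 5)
Your proposal is correct and is essentially the paper's own argument: both directions rest on the same two estimates, namely the Cauchy--Schwarz/gradient-projection lower bound $\|\dot\gamma\|_g\ge |d\phi(\dot\gamma)|/\|d\phi\|_g$, which reduces the $g$-length of any curve approaching $F$ to $\int\phi^{-r}\,d\phi$ (divergent exactly when $r\ge 1$), and, for $r<1$, the finite-length $g$-gradient trajectory of $\phi$ reparametrised so that $\phi$ itself is the parameter. Your additional refinements (the primitive-based bound valid for non-monotone curves, and the collar-confinement discussion) go slightly beyond the paper but are not needed for its framework: since infinite distance is tested there against curves converging in $\mathcal A_C^{g_f}$ to a point of $F$, the tail of any such curve is automatically trapped in the neighbourhood where $\|d\phi\|_g\asymp\phi^r$ holds, so the localisation issue you raise does not arise.
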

\begin{proof}
	Let  $\gamma:[0,1)\to\mathcal A$ be a curve limiting to $F$. Since $\lim_{t\to 1}\phi\circ\gamma(t)=0$, we can find some $T\in[0,1)$ such that $\gamma|_{[T,1)}$ is transverse to the level sets of $\phi$. Thus the function $\phi\circ\gamma|_{[T,1)}:[T,1)\to\mathbb R$ is invertible with image $(0,\epsilon]$, where $\epsilon = \phi\circ\gamma(T)$. Then $\tilde{\gamma}:=\gamma\circ(\phi\circ\gamma)^{-1}:(0,\epsilon]\to\mathcal A$ is a reparametrization of a tail of $\gamma$ which satisfies $\phi\circ\tilde{\gamma}(\tau)=\tau$. Taking the derivative of this relation, we find that $d\phi\circ\dot{\tilde{\gamma}}=1$. We can choose $\epsilon$ small enough such that the $\frac1{C}\phi^r<\|d\phi\|_g< C{\phi^r}$ holds. We compute
	\begin{align}\label{eqnFoldAsymp}
	\length_{g}(\gamma)&\geq\length_{g}(\tilde{\gamma})
	=\int_0^\epsilon\|\dot{\tilde{\gamma}}(\tau)\|_{g}d\tau
	\geq\int_0^\epsilon g\left(
	 \frac{\nabla^g\phi}{\|\nabla^g\phi\|_g},
	 \dot{\tilde{\gamma}}\right)_{\tilde{\gamma}(\tau)}d\tau\\
	&=\int_0^\epsilon\left(\frac{d\phi\circ\dot{\tilde{\gamma}}}{\|d\phi\|_g}\right)_{\tilde \gamma(\tau)}d\tau\asymp\int_0^\epsilon\frac{d\tau}{\phi^r\circ\tilde{\gamma}(\tau)}
	=\int_0^\epsilon\frac{d\tau}{\tau^r}.\nonumber
	\end{align}
	In the case when $r\geq 1$, this integral is infinite, so that the $g$-length of $g$ is infinite. Since $\gamma$ is an arbitrary curve limiting to $F$, we find that $F$ is $g$-infinitely distant as claimed.
	
	Conversely, consider the case when $r<1$. One can use the $g$-gradient flow of the function $\phi$ to construct a curve $\gamma$ limiting to $F$ which satisfies $\dot{{\gamma}}=(\nabla^g\phi)$. Clearly $\gamma$ admits a parametrization by some parameter $\tau$ satisfying $\phi\circ\gamma(\tau)=\tau$. Let $\tilde{\gamma}$ be the restriction of $\gamma$ to a neighbourhood of $\lim_{\tau\to 0}\gamma(\tau)$ such that $\|d\phi\|_g\asymp{\phi^r}$. Without loss of generality, we can assume that $\gamma=\tilde\gamma$. Then (\ref{eqnFoldAsymp}) holds after replacing the inequalities with equalities. Since $r<1$, we find that the $g$-length of $\gamma$ is finite, so that $F$ is not $g$-infinitely distant.
\end{proof}

\begin{lemma}\label{lemProperFoldClose}\label{lemProperFoldDist}
	If $g\in\{g_0,g_\pm\}\cup\{g_p\}_{p\perp q}$, then all proper folds are not infinitely distant with respect to $g$. 
\end{lemma}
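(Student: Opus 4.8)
The plan is to apply Lemma~\ref{lemAsyLength} to an explicit defining function for each proper fold and to check that the relevant exponent $r$ is always strictly less than $1$. Writing $g=\psi\,g_0$ with $\psi$ a positive conformal factor, the cometric scales by $\psi^{-1}$, so $\|d\phi\|_g=\psi^{-1/2}\|d\phi\|_{g_0}$ for any $\mathfrak t$-invariant function $\phi(x,y)$. The metric $g_0$ of~(\ref{eqnAmbiStructure}) is block diagonal with respect to the splitting into the $\{dx,dy\}$-directions and the torus directions, and on the horizontal block it is $\tfrac{dx^2}{A(x)}+\tfrac{dy^2}{B(y)}$; hence the induced cometric gives $\|d\phi\|_{g_0}^2=A(x)(\partial_x\phi)^2+B(y)(\partial_y\phi)^2$. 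The whole proof then reduces to bookkeeping of powers of the defining function.

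First I would treat a positive fold $F$, where $x-y=0$ and $q(x,y)\neq0$, taking $\phi=x-y$. Then $\|d\phi\|_{g_0}^2=A(x)+B(y)$, which is bounded away from $0$ and $\infty$ on $F$ since $A,B>0$ there, so $g_0$ gives $r=0$. Using $f=\tfrac{q(x,y)}{x-y}$, near $F$ one has $\psi_{g_+}=f^{-1}\asymp\phi$, $\psi_{g_-}=f\asymp\phi^{-1}$, and (at points where $p\neq0$) $\psi_{g_p}=\tfrac{(x-y)q}{p^2}\asymp\phi$, so $\|d\phi\|_g=\psi^{-1/2}\|d\phi\|_{g_0}$ yields $r=-\tfrac12,\tfrac12,-\tfrac12$ respectively. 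For a negative fold $F$, where $q(x,y)=0$ and $x-y\neq0$, I would instead take $\phi=q(x,y)$; then $\|d\phi\|_{g_0}^2=\tfrac14\big(A(x)q'(y)^2+B(y)q'(x)^2\big)$, again $\asymp1$ provided $dq\neq0$ on $F$. Repeating the computation with $\psi_{g_+}=f^{-1}\asymp\phi^{-1}$, $\psi_{g_-}=f\asymp\phi$ and $\psi_{g_p}\asymp\phi$ gives $r=\tfrac12,-\tfrac12,-\tfrac12$. In every case $r\le\tfrac12<1$, so Lemma~\ref{lemAsyLength} shows $F$ is not infinitely distant.

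Two smaller points need care. I must verify $d\phi\neq0$ on a negative fold so that $\|d\phi\|_{g_0}$ is bounded below: since $\partial_xq=\tfrac12q'(y)$ and $\partial_yq=\tfrac12q'(x)$, a zero of $dq$ forces $q'(x)=q'(y)=0$, which for a genuine quadratic $q$ can only occur on the diagonal $x=y$ — excluded on a negative fold — so $dq\neq0$ there. This can also be seen directly in the parabolic, hyperbolic and elliptic normal forms of Section~\ref{secNorm}, noting moreover that the parabolic case ($q\equiv1$) has no negative fold at all.

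The main obstacle is the behaviour of $g_p$ where $p(x,y)$ vanishes, since there the asymptotic $\psi_{g_p}\asymp\phi$ breaks down and $\psi_{g_p}$ blows up. I would resolve this by observing that a nonzero $p\perp q$ cannot vanish identically on a fold: on the diagonal $p(x,x)=p(x)$, and restricting $p$ to a negative fold in each normal form and imposing $\langle p,q\rangle=0$ forces $p\equiv0$ unless the restriction is nontrivial. Hence $P=\{p=0\}$ meets $F$ only in isolated points, and one can choose a point of $F$ near which $p\neq0$, where the exponent computation above applies verbatim. Since being \emph{not infinitely distant} only requires exhibiting a single finite-length curve limiting to $F$, the finite-length gradient-flow curve constructed in the $r<1$ direction of Lemma~\ref{lemAsyLength}, run from such a point, completes the argument for $g_p$; for $g_0$ and $g_\pm$ the asymptotics hold at every point of $F$ and Lemma~\ref{lemAsyLength} applies directly.
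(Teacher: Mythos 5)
Your proposal is correct and follows essentially the same route as the paper's proof: both apply Lemma~\ref{lemAsyLength} with $\phi=x-y$ (respectively $\phi=q(x,y)$) on positive (respectively negative) folds, check that $d\phi$ is nonvanishing there so that $\|d\phi\|_{g_0}\asymp 1$, and then track the exponent $r\in\{0,\pm\tfrac12\}$ produced by the conformal factor relating $g$ to $g_0$, concluding from $r<1$ in every case (the paper quotes $r=0$ rather than $-\tfrac12$ for $g_p$, but this discrepancy is harmless since only $r<1$ matters). Your additional step for $g_p$ --- observing that $P=\{p(x,y)=0\}$ meets a proper fold $F$ in at most finitely many points unless $p$ is proportional to $q$, and then running the finite-length gradient-flow curve of Lemma~\ref{lemAsyLength} from a point of $F$ away from $P$ --- is a genuine refinement, since the paper's proof invokes the lemma for $g_p$ without addressing the points of $P\cap F$ where the asymptotics $\|d\phi\|_{g_p}\asymp\phi^{r}$ break down.
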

\begin{proof}
	First we consider the case of a negative fold $F$. This is the vanishing locus of $f=\frac{q(x,y)}{x-y}$. Up to a change of gauge, we may assume that $q(x,y)$ is not a constant function. This allows us to identify $F$ as the vanishing locus of $q(x,y)$. It is a simple computation with the normal forms to verify that $dq(x,y)$ and $q(x,y)$ do not share any vanishing points. This, combined with the form of $g_0$ from (\ref{eqnAmbiStructure}) allow us to deduce that $\|dq(x,y)\|_{g_0}$ extends to a positive function on $F$. This allows us to apply the previous lemma with $\phi=q(x,y)$ and $r=0$ to deduce that $F$ is not infinitely distant with respect to $g_0$. Since $g_\pm=f^{\mp 1}g_0$, $\|dq(x,y)\|_{g_\pm}={f^{\pm \frac12}}\|dq(x,y)\|_{g_0}$. Since $(x-y)$ does not vanish on $F$, we can then apply the previous lemma with $r=\pm\frac12$ to deduce that $F$ is not infinitely distant with respect to $g_\pm$. 
	
	Similarly, if $p\neq q$, then applying the previous lemma with $r=0$ gives the analogous result for $g_p$. If $p=q$, then $g_p=g_+$, so we've already seen that $F$ is not $g_p$-infinitely distant. The case of positive folds follows similarly by switching the roles of $q(x,y)$ and $(x-y)$.
\end{proof}

\begin{lemma}\label{lemPropPDist}
	The proper part of $P$ is infinitely distant with respect to $g_p$.
\end{lemma}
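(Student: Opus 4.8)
The plan is to read this off directly from the asymptotic criterion of Lemma~\ref{lemAsyLength}, applied with $\phi = p(x,y)$ and $g = g_p$. Since $P$ is the vanishing locus of $p(x,y)$, it suffices to show that the relevant comparison exponent is $r = 1$, i.e.\ that $\|dp\|_{g_p} \asymp |p(x,y)|$ on a neighbourhood of each point of the proper part of $P$. Lemma~\ref{lemAsyLength} then gives that the proper part of $P$ is infinitely distant with respect to $g_p$, precisely because $r = 1 \ge 1$.

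The conformal factor does all the work in identifying this exponent. Since $g_p = \frac{(x-y)q(x,y)}{p(x,y)^2}\,g_0$, the induced norm on covectors scales by the inverse square root of the conformal factor, so that
\begin{equation*}
\|dp\|_{g_p} = \frac{|p(x,y)|}{\sqrt{(x-y)q(x,y)}}\,\|dp\|_{g_0},
\end{equation*}
where the square root is legitimate because $(x-y)q(x,y) > 0$ wherever $g_p$ is a genuine Riemannian metric. The statement therefore reduces to showing that the prefactor $\dfrac{\|dp\|_{g_0}}{\sqrt{(x-y)q(x,y)}}$ is bounded above and below by positive constants near each point of the proper part of $P$.

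To control the prefactor, I would first observe that every point of the proper part of $P$ lies in the interior of $\mathcal A$, where by definition $A(x) > 0$, $B(y) > 0$ and $(x-y)q(x,y) \neq 0$; on a small enough neighbourhood all of these quantities are bounded away from $0$ and $\infty$, so $g_0$ is a genuine positive-definite metric with bounded inverse and the factor $\sqrt{(x-y)q(x,y)}$ is harmless. It then remains only to check that $\|dp\|_{g_0}$ does not degenerate, i.e.\ that $dp \neq 0$. Writing $dp = \tfrac12\bigl(p'(y)\,dx + p'(x)\,dy\bigr)$, a zero of $dp$ would force $p'(x) = p'(y) = 0$ and hence $x = y = -p_1/p_0$; but $x = y$ is impossible on $\mathcal A$, where $x - y \neq 0$ (and if $p_0 = 0$ then $p$ non-constant gives $dp = p_1(dx+dy) \neq 0$ outright). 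Hence $dp \neq 0$ throughout $P$, and $\|dp\|_{g_0}$, being smooth and nonzero, is bounded away from $0$ and $\infty$ on the neighbourhood. This is the exact analogue of the nonvanishing fact for $q(x,y)$ used in the proof of Lemma~\ref{lemProperFoldClose}.

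Combining these observations yields $\|dp\|_{g_p} \asymp |p(x,y)|^1$ near the proper part of $P$, so Lemma~\ref{lemAsyLength} applies with $r = 1$ and gives the claim. The genuinely load-bearing point — and the one I would state most carefully — is that the exponent is \emph{exactly} $1$ at proper points: near an edge or a fold the factors $A(x)$, $B(y)$, or $(x-y)q(x,y)$ could tend to $0$ or $\infty$ and shift the effective exponent away from $1$, which is precisely why the hypothesis restricts to the proper part of $P$. A minor caveat is that $p(x,y)$ changes sign across $P$; since $r = 1$ this is immaterial, as one may run Lemma~\ref{lemAsyLength} with $|p(x,y)|$ in place of $p(x,y)$.
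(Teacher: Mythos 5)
Your proposal is correct and follows essentially the same route as the paper: both identify the key point as the nonvanishing of $dp=\tfrac12\bigl(p'(y)\,dx+p'(x)\,dy\bigr)$ on the proper part of $P$ (any zero forces $x=y$, which is excluded there since that locus is a fold), and both then apply Lemma~\ref{lemAsyLength} with $\phi=p(x,y)$ and $r=1$. Your version is in fact slightly more careful on one point: the paper writes $\|dp\|_{g_p}=p(x,y)\,\|dp\|_{g_0}$, whereas the exact conformal scaling is $\|dp\|_{g_p}=\frac{|p(x,y)|}{\sqrt{(x-y)q(x,y)}}\,\|dp\|_{g_0}$ as you state; the discrepancy is harmless since $(x-y)q(x,y)$ is locally bounded away from $0$ and $\infty$ near proper points, so the comparison exponent $r=1$ is unaffected.
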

\begin{proof}
	If $p$ is constant, then $P$ is empty, so there is nothing to show. As we saw in lemma \ref{lemPMomLine}, $dp(x,y)=\frac12 p'(y)dx+\frac12 p'(x) dy$, which only vanishes along $P$ in the case when $<p,p>=0$. In this case, there is some $\gamma\in\mathbb R$ so that $p(z)=(z-\gamma)^2$, and $\{dp(x,y)=0\}\cap P=\{x=y=\gamma\}.$ This is a $\mathbb T$-orbit in $\mathcal A_C^{g_f}$ which is a corner where $P$ meets the fold $\{x=y\}$. Since we are treating corners separately, we find that $dp(x,y)$ is well-defined and non-vanishing on the proper part of $P$. It follows that $\|dp(x,y)\|_{g_0}$ extends to a positive function on the proper part of $P$. Since $\|dp(x,y)\|_{g_p}=p(x,y)\|dp(x,y)\|_{g_0}$, the result follows by applying lemma \ref{lemAsyLength} with $\phi=p(x,y)$ and $r=1$. 
\end{proof}

\subsubsection{Edges}

We start our investigation of the edges by building criteria to test the infinitely distance condition. The following lemma lies at the heart of the argument:

\begin{lemma}
	Let $N$ be a manifold and $T>0$. Consider the Riemannian manifold 
	$$(\hat N,g):=\big((0,T)\times N,a(t)^2dt^2+g_N(t)\big),$$
	where $a(t)$ is a smooth positive function on $(0,T)$ and $\{g_N(t)\}_{t\in(0,T)}$ is a smooth family of metrics on $N$. Then any smooth curve $\gamma:[0,1)\to\hat N$ satisfying $\lim_{\tau\to 1}t\circ\gamma(\tau)=0$ has finite $g$-length if and only if there exist some $\epsilon>0$ such that $\int_0^\epsilon a(t)dt$ converges.
\end{lemma}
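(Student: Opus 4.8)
The plan is to write an arbitrary admissible curve in the product coordinates $(t,c)$ of $(0,T)\times N$, bound its length from below by its motion in the $t$--direction alone, and then match that bound by an explicit vertical curve. First I would write $\gamma(\tau)=(t(\tau),c(\tau))$ with $t_0:=t\circ\gamma(0)\in(0,T)$, so that $t\circ\gamma:[0,1)\to(0,t_0]$ is continuous with $t\circ\gamma(\tau)\to0$ as $\tau\to1$. Since $g_N(t)$ is a genuine (positive-definite) metric for each $t$, the integrand $\sqrt{a(t)^2\dot t^2+g_N(t)(\dot c,\dot c)}$ dominates $a(t)\,|\dot t|$, so that
$$\length_g(\gamma)\ \geq\ \int_0^1 a\big(t(\tau)\big)\,|\dot t(\tau)|\,d\tau.$$
This already discards the uncontrolled $N$--motion, which is the point: the length can only be larger than its purely radial part.

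Next I would convert the right-hand side into a total variation. Because $a>0$, any antiderivative $A$ of $a$ on $(0,t_0]$ is strictly increasing, and $\int_0^\epsilon a\,dt$ converges for some (equivalently every small) $\epsilon>0$ precisely when $A(0^+):=\lim_{s\to0^+}A(s)>-\infty$. Since $\tfrac{d}{d\tau}(A\circ t)=a(t)\dot t$ and $a>0$, the integral above is exactly the total variation of $A\circ t$ on $[0,1)$, and a total variation is at least the magnitude of the difference of its endpoint limits, whence
$$\length_g(\gamma)\ \geq\ \mathrm{Var}_{[0,1)}(A\circ t)\ \geq\ A(t_0)-A(0^+)\ =\ \int_0^{t_0} a(s)\,ds.$$
Thus if an admissible $\gamma$ has finite length then $\int_0^{t_0}a<\infty$; contrapositively, if $\int_0^\epsilon a$ diverges then \emph{every} admissible curve has infinite length. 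This is the implication used to certify that the hypersurface $\{t=0\}$ is infinitely distant.

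For the converse I would simply produce a finite-length curve when the integral converges: the vertical curve $\tau\mapsto\big(t_0(1-\tau),n_0\big)$ with $n_0\in N$ fixed has length exactly $\int_0^{t_0} a(s)\,ds<\infty$, so a finite-length curve reaching $t=0$ exists. Combining the two gives the stated equivalence. The one delicate point, and the step I expect to require the most care, is the lower bound: the function $t\circ\gamma$ need not be monotone, so the curve may oscillate in $t$ while drifting off in $N$, and one must check that the endpoint-difference estimate survives the limit $\tau\to1$ (with $A(0^+)=-\infty$ corresponding exactly to divergence). Recasting the radial contribution as the total variation of the monotone reparametrization $A\circ t$ is precisely what lets me handle both the oscillation in $t$ and the unbounded $N$--motion in one stroke, rather than having to assume $t\circ\gamma$ is monotone or $N$ compact.
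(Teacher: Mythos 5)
Your proof is correct and follows essentially the same route as the paper: bound the length from below by its radial contribution to show that divergence of $\int_0^\epsilon a(t)\,dt$ forces every admissible curve to have infinite length, and exhibit the vertical curve $\tau\mapsto\big(t_0(1-\tau),n_0\big)$, of length exactly $\int_0^{t_0}a(s)\,ds$, for the converse. If anything, your total-variation estimate $\length_g(\gamma)\geq \mathrm{Var}_{[0,1)}(A\circ t)\geq A(t_0)-A(0^+)$ is more careful than the paper's corresponding step, which asserts that a finite-length curve "may be reparametrized so that $t\circ\gamma(\tau)=\tau$" --- a claim that implicitly assumes $t\circ\gamma$ is monotone, precisely the oscillation issue your argument circumvents.
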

\begin{proof}
	First assume that any curve $\gamma:(0,1)\to\tilde M$ with $\lim_{\tau\to 0}t\circ\gamma(\tau)=0$ has infinite length. In particular, for any $p\in N$ and $\epsilon\in(0,T)$, the curve $\gamma(\tau):=(\epsilon\tau,p)$ has infinite length. But this length is $$\int_0^1\sqrt{g(\dot\gamma(\tau),\dot{\gamma}(\tau))}d\tau
	=\int_0^1\epsilon a(\epsilon\tau)d\tau=\int_0^\epsilon a(t)dt,$$
	so that this integral cannot converge as claimed.
	
	Conversely, suppose that any curve $\gamma:(0,1)\to\tilde M$ with $\lim_{\tau\to 0}t\circ\gamma(\tau)=0$ has infinite length. Fix such a gamma. We may reparametrize $\gamma$ so that for some $\epsilon>0$, $t\circ\gamma(\tau)=\tau$ for all $\tau\in(0,\epsilon)$. We then have
	\begin{align*}
	\infty &> \int_0^1\sqrt{g(\dot\gamma(\tau),\dot{\gamma}(\tau))}d\tau \geq
	\int_0^\epsilon\sqrt{g(\dot\gamma(\tau),\dot{\gamma}(\tau))}d\tau \\&\geq  \int_0^\epsilon\sqrt{a(t)^2dt^2(\dot\gamma(\tau),\dot{\gamma}(\tau))}d\tau
	= \int_0^\epsilon a(\tau)d\tau.
	\end{align*}
	This proves our claim.
\end{proof}

We now move on to consider the case $g=g_0$:

\begin{lemma}\label{lemEdgeDist}
	If an edge $E$ is given by $\{x=x_0\}$ (respectively $\{y=y_0\}$), then $E$ is infinitely distant with respect to $g_0$ if and only if there does not exist an $\epsilon>0$ such that $\int_{x_0}^{x_0+\sigma\epsilon}\frac{dx}{\sqrt{A(x)}}$ (respectively $\int_{y_0}^{y_0+\sigma\epsilon}\frac{dy}{\sqrt{B(y)}}$) converges. Here $\sigma\in\{\pm 1\}$ is chosen so that $x_0+\sigma\epsilon$ or $y_0+\sigma\epsilon$ lies in the range of $x$ or $y$ on $\mathcal A$. Also $x_0$ and $y_0$ are assumed to be finite, which can be arranged after a gauge transformation.
\end{lemma}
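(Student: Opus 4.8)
The plan is to recognize $g_0$ near the edge as a warped product of the type treated in the preceding lemma, with the transverse coordinate supplied by $x$, and then obtain the stated criterion by a one-variable change of variables.

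First I would localize to a $\mathbb T$-invariant neighbourhood of an interior point of $E$. Since $E$ is an edge and not a corner, I may assume $q(x,y)(x-y)\neq 0$ and $B(y)>0$ there, so that every coefficient in the normal form (\ref{eqnAmbiStructure}) is smooth, with $A(x)>0$ on the interior and $A(x_0)=0$. The structural point is that in (\ref{eqnAmbiStructure}) the one-forms $d\tau(x)$ and $d\tau(y)$ take values in the torus directions and contain no $dx$ (nor $dy$) component; hence $g_0$ is block diagonal and splits as
$$ g_0 = \frac{dx^2}{A(x)} + g_N(x),$$
where $g_N(x)$ collects the $dy^2/B(y)$ term together with the two squared torus one-forms. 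For each fixed $x$ with $A(x)>0$ this $g_N(x)$ is positive definite on a slice $N$ in the $(y,\vec t)$ variables, because the torus forms $d\tau(x)$ and $d\tau(y)$ are independent when $x\neq y$; thus $\{g_N(x)\}$ is a smooth family of metrics on $N$. It degenerates as $x\to x_0$ (the Killing direction carrying the coefficient $A(x)$ collapses), but this is harmless, since the previous lemma only requires a metric on the open parameter interval and never evaluates $g_N$ at the endpoint.

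Next I would set $t:=\sigma(x-x_0)$, with $\sigma\in\{\pm1\}$ chosen so that $t>0$ points into $\mathcal A$, and $a(t):=A(x_0+\sigma t)^{-1/2}$, which is smooth and positive on $(0,\epsilon)$. The neighbourhood then becomes $\big((0,\epsilon)\times N,\ a(t)^2\,dt^2 + g_N(t)\big)$, exactly the setting of the preceding lemma, with the limit $t\to 0$ corresponding to approaching $E$. That lemma then says a curve limiting to $E$ can have finite $g_0$-length precisely when $\int_0^\epsilon a(t)\,dt$ converges for some $\epsilon$, and that otherwise every such curve has infinite length. Substituting $x=x_0+\sigma t$ turns this into $\int_{x_0}^{x_0+\sigma\epsilon}\frac{dx}{\sqrt{A(x)}}$, so $E$ lies in $\mathcal A_C^{g_0}$ exactly when this integral converges for some $\epsilon$; equivalently, $E$ is infinitely distant exactly when it diverges for every $\epsilon$. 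The transverse integral depends only on $A$ near $x_0$ and not on the chosen point of $E$, so the conclusion holds for the whole edge. The case $\{y=y_0\}$ is identical after interchanging $(x,A)$ with $(y,B)$.

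I expect the only genuine subtlety to be confirming the warped-product form: that there are no $dx$-cross terms and that $g_N(x)$ remains positive definite off the edge, together with the bookkeeping that the collapse of one torus direction as $x\to x_0$ does not obstruct the application of the previous lemma. Once this is in place, the result follows from that lemma together with the elementary change of variables.
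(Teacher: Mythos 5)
Your proposal is correct and takes essentially the same approach as the paper's proof: localize near a $\mathbb T$-orbit of the edge, write $g_0$ in the warped-product form $a(t)^2\,dt^2+g_N(t)$ with $t=\sigma(x-x_0)$ and $a(t)=A(x_0+\sigma t)^{-1/2}$, apply the preceding lemma, and change variables to obtain $\int_{x_0}^{x_0+\sigma\epsilon}\frac{dx}{\sqrt{A(x)}}$. The details you flag (block-diagonality of $g_0$, positive definiteness of $g_N(x)$ away from the edge via independence of $d\tau(x)$ and $d\tau(y)$, and the harmless degeneration of $g_N$ at the endpoint) are precisely what the paper compresses into ``it is easy to see.''
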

\begin{proof}
	We will prove the case for $x_0$ with $\sigma=1$. Consider a $\mathbb T$-orbit $\mathcal O$ on $E$. It has coordinates $(x_0,y_0)$ for some $y_0\in\mathbb R$. We can find a neighbourhood $I_y$ of $y_0$ and some $T>0$ such that 
	$(x_0,x_0+T)\times I_y\times\mathbb T\subset\mathcal A$.  
	Let $N=I_y\times\mathbb T$.
	Consider the following coordinate transformation
	$$\phi:M\to(0,T)\times N:(x,y,\vec t)\mapsto (x-x_-,(y,\vec t)).$$
	It is easy to see that $\phi_* g_0$ is of the form $a(t)^2 dt^2+g_N(t)$, where $\phi^*a(t) = \frac1{\sqrt{A(x)}}$. By the previous lemma, we find that $\mathcal O$ is in the $g_0$-Cauchy completion of $M$ if and only if there exists some $\epsilon>0$ such that the integral 
	$$\int_0^\epsilon a(t)dt =\int_{x_-}^{x_-+\epsilon}\frac{dx}{\sqrt{A(x)}}$$ converges. Since $\mathcal O$ was an arbitrary $\mathbb T$-orbit on $E$, this gives the desired result. 
\end{proof}

To understand how the condition of $g$-completable applies to the edges, we will borrow from the work of \cite{apostolov2013ambitoric2}. They define:

\begin{defn}
	An \emph{ambitoric compactification} is a compact connected oriented $4$-orbifold $M$ with an effective action of $2$-torus $\mathbb T$ such that on the (dense) union $\mathring M$ of the free $\mathbb T$-orbits, there is an ambitoci structure $(g_\pm, J_\pm, \mathbb T)$ for which at least one of the K\"ahler metrics extends smoothly to a toric K\"ahler metric on $(M,\mathbb T)$. An ambitoric compactification is \emph{regular} if the ambitoric structure on $\mathring M$ is regular with $(x,y)$-coordinates that are globally defined.
\end{defn}

We will add some definitions:

\begin{defn}
	A connected component $U$ of an ambitoric ansatz space $\mathcal A$ is \emph{box-type} if all of its folds are also edges or corners. It's clear that such a $U$ must be of the form $(x_-,x_+)\times(y_-,y_+)\times\mathbb T\subseteq\mathcal A$, for some intervals $(x_-,x_+),(y_-,y_+)\subset\mathbb R$. 
\end{defn}

\begin{defn}\label{defCompNormal}
	Let $E$ be an edge given by $\{x=x_0\}$ (respectively $\{y=y_0\}$). We say $E$ has a \emph{compatible normal} if $A(x_0)=0$ (respectively $B(y_0)=0$) and the normal vector to $\mu^\pm(E)$ given by $-2\frac{p^{(x_0)}}{A'(x_0)}$ (respectively $2\frac{p^{(y_0)}}{B'(y_0)}$) lies in the lattice $\Lambda\subset\mathfrak t$, where for each $\gamma\in\mathbb R$, $p^{(\gamma)}\in\mathfrak t$ corresponds to the polynomial $p^\gamma(x,y):=\frac12(x-\gamma)q(y,\gamma)+\frac12q(x,\gamma)(y-\gamma)$ under the identification of $\mathfrak t$ with symmetric quadratic polynomials orthogonal to $q(x,y)$. See \cite{apostolov2013ambitoric} for details on this identification.
\end{defn}

Note that the above definition is given for the case when $x_0$ or $y_0$ is finite. In the infinite case, one can reduce to the finite case by performing a gauge transformation (for example $z\mapsto-\frac1z)$.

We can then paraphrase the classification of ambitoric compactifications (proposition 3 in \cite{apostolov2013ambitoric2}):

\begin{prop}\label{propAmbiCompBox}
	A box-type component of an ambitoric ansatz space is the interior of an ambitoric compactification if and only if each of its edges has a compatible normal.
\end{prop}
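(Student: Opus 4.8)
The plan is to deduce the proposition from the classification of ambitoric compactifications (Proposition 3 of \cite{apostolov2013ambitoric2}), by verifying that its boundary conditions are exactly encoded by the compatible normal condition of Definition \ref{defCompNormal}. Write the box-type component as $U=(x_-,x_+)\times(y_-,y_+)\times\mathbb T$, whose finite edges are the four level sets $\{x=x_\pm\}$ and $\{y=y_\pm\}$. Since an ambitoric compactification only requires at least one K\"ahler metric $g_\pm$ to extend to a compact toric K\"ahler orbifold, the question is purely one of toric K\"ahler geometry: when does $(\omega_\pm,g_\pm,\mathbb T)$ on $U$ extend across each edge to a smooth toric orbifold?

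I would then set up the standard toric dictionary (Lerman--Tolman structure theory, Abreu--Guillemin boundary conditions). Extending across an edge $E=\{x=x_0\}$ requires the torus fibration to degenerate there through the collapse of a single circle subgroup, and for the collapsed locus to be an orbifold stratum this subgroup must be closed, i.e. its generator must lie in $\Lambda$. First I would show that fiber collapse forces $A(x_0)=0$: by the fiber-volume computation recalled in Lemma \ref{lemFoldComp}, $\Vol_{g_\pm}(\mathbb T_{(x,y)})$ is proportional to $A(x)B(y)$ over a nonvanishing denominator, so a fiber degenerates along $E$ precisely when $A(x_0)=0$. Next I would identify the degenerating circle: by Lemma \ref{lemPMomLine} the image $\mu^\pm(E)$ is a line whose normal is the class $p^{(x_0)}\in\mathfrak t$, and expanding $A(x)=A'(x_0)(x-x_0)+O((x-x_0)^2)$ at the simple root fixes the normalization of the primitive lattice generator to be $-2\,p^{(x_0)}/A'(x_0)$, matching Definition \ref{defCompNormal} (with the sign reversed for $y$-edges owing to the orientation convention in $\omega_-$).

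With this dictionary the two implications follow. For the forward direction, if $U$ is the interior of an ambitoric compactification then one $g_\pm$ extends to a compact toric K\"ahler orbifold, whose moment image is a rational simple polygon; Lerman--Tolman then forces every facet normal to be a lattice vector, i.e. each edge has a compatible normal. For the converse, given compatible normals at every edge, the vanishing $A(x_\pm)=0$ and $B(y_\pm)=0$ together with the lattice condition let one verify the Guillemin--Abreu boundary conditions exactly as carried out in \cite{apostolov2013ambitoric2}, so the K\"ahler structure extends smoothly across each facet. The corners need no separate hypothesis: at a vertex of the box the two adjoining facets are one $x$-edge and one $y$-edge, whose normals $p^{(x_\alpha)}$ and $p^{(y_\beta)}$ are linearly independent in $\mathfrak t$, so the two lattice generators span a finite-index sublattice and the vertex is automatically an orbifold point. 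Hence $U$ is the interior of a compact toric K\"ahler orbifold, and therefore of an ambitoric compactification.

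The main obstacle I expect is bookkeeping rather than analysis: pinning down the precise normalization constant $-2/A'(x_0)$ and its sign, and checking that the abstract closing-up conditions of \cite{apostolov2013ambitoric2} are captured exactly---no more, no less---by lattice membership of this particular normal. Because the hard analytic content, that these boundary conditions genuinely produce a smooth compact orbifold, is already established in \cite{apostolov2013ambitoric2}, the proof is essentially a faithful translation of their Proposition 3 into the moment-polygon language used here.
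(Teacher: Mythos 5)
Your overall strategy---treating the proposition as a translation of Proposition 3 of \cite{apostolov2013ambitoric2} into the compatible-normal language---is exactly what the paper does: the paper offers no independent proof at all, introducing the statement with ``we can then paraphrase the classification of ambitoric compactifications,'' so your reduction to the cited result, together with the edge dictionary (fiber collapse forcing $A(x_0)=0$, the moment-line normal from Lemma \ref{lemPMomLine}, the normalization $-2p^{(x_0)}/A'(x_0)$, and lattice membership via Lerman--Tolman), is the right skeleton and matches the paper's intent.

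However, your treatment of corners has a genuine gap. You claim that at a vertex of the box the normals $p^{(x_\alpha)}$ and $p^{(y_\beta)}$ are automatically linearly independent, so that the vertex is automatically an orbifold fixed point. This fails precisely when the corner is a fold, i.e.\ lies on the vanishing locus of $(x-y)q(x,y)$---a configuration the definition of box-type explicitly permits (folds are allowed to be edges or corners). Indeed, by Lemma \ref{lemPMomLine} each edge maps to a line tangent to the conic $\mathcal C_\pm$; a fold corner maps to a point of $\mathcal C_\pm$ itself, so the two adjacent edge-lines are tangent to the conic at the \emph{same} point and hence coincide, forcing the two normals to be proportional rather than independent (concretely, in the hyperbolic normal form with corner at $x_\alpha=y_\beta=\gamma$, both normals are multiples of $p^{(\gamma)}=xy-\gamma^2$). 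At such a corner the moment image has a straight angle, the torus fiber collapses to a circle rather than a point, and the Delzant/Lerman--Tolman vertex model you invoke simply does not apply; this degenerate configuration is exactly what the paper must treat separately in Lemma \ref{lemCornerFold}, the fourth condition of Theorem \ref{thmClassCompletable}, and the third bullet of Corollary \ref{corAmbiClass}. (A milder instance of the same implicit assumption appears in your edge analysis: the fiber-volume denominator $q(x,y)^4$ can itself vanish on an edge in the parabolic case, which is why the paper needs Lemma \ref{lemEdgeFoldComp}.) So your converse direction is incomplete for box-type components whose closure meets the fold locus: either such corners must be excluded by checking them against the precise hypotheses of Proposition 3 in \cite{apostolov2013ambitoric2}, or they require a separate local analysis that the linear-independence argument cannot supply.
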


This will allow us to prove:

\begin{lemma}\label{lemEdgeComp}
	Let $g\in\{g_0,g_\pm\}\cup\{g_p\}_{p\perp q}$. If $\mathcal A$ is $g$-completable, then each edge is either infinitely distant, a fold, part of $P$,  or has compatible normal. 
\end{lemma}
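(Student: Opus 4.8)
The plan is to localize to the interior of a single edge, reduce the chosen metric to the barycentric one $g_0$, use non-infinite-distance to force a simple zero of $A$, and then read the lattice condition off the local toric model of the completion.

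First I would set up and reduce to $g_0$. By the gauge freedom noted after Definition~\ref{defCompNormal} I may assume $E=\{x=x_0\}$ with $x_0$ finite (the case $E=\{y=y_0\}$ is symmetric), and since a finite edge is a boundary point of $\{A>0\}$ we have $A(x_0)=0$. Write $g=\phi\,g_0$, where $\phi$ equals $1$, $\left(\frac{q(x,y)}{x-y}\right)^{\mp1}$, or $\frac{(x-y)q(x,y)}{p(x,y)^2}$ according to the case. If $E$ is neither a fold nor (when $g=g_p$) part of $P$, then $(x-y)q(x,y)$ and $p(x,y)$ are non-vanishing on the interior of $E$, so $\phi$ extends to a smooth positive function on a neighbourhood of each interior orbit of $E$. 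Hence $g$ and $g_0$ are uniformly equivalent there; in particular $E$ is $g$-infinitely distant if and only if it is $g_0$-infinitely distant, and because $\phi^{\pm1}$ are smooth, $g$ admits a smooth orbifold extension across $E$ exactly when $g_0$ does. This reduces the statement to the case $g=g_0$, with the standing assumption that $E$ is neither infinitely distant, nor a fold, nor part of $P$, and the remaining goal of showing that $E$ has compatible normal.

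Second I would pin down the vanishing order of $A$. Since $E$ is not $g_0$-infinitely distant, Lemma~\ref{lemEdgeDist} supplies an $\epsilon>0$ with $\int_{x_0}^{x_0+\sigma\epsilon}\frac{dx}{\sqrt{A(x)}}$ convergent. As $A$ is smooth, vanishes at $x_0$, and is positive on exactly one side of $x_0$, its zero there is of odd order, and convergence of this integral forces the order to be $1$, i.e. $A'(x_0)\neq0$. In particular the candidate normal $-2\frac{p^{(x_0)}}{A'(x_0)}$ of Definition~\ref{defCompNormal} is well-defined, and it remains only to show that it lies in the lattice $\Lambda$.

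Third, and this is the crux, I would extract the lattice condition from $g_0$-completability. Near the interior of $E$, the transverse part of $g_0$ in (\ref{eqnAmbiStructure}) is, up to a term regular and non-degenerate across $x=x_0$, the two-dimensional model $\frac{dx^2}{A(x)}+A(x)\left(\frac{d\tau(y)}{(x-y)q(x,y)}\right)^2$. Using $A'(x_0)\neq0$, a radial change of variable turns this into a smooth surface-of-revolution disk in which the circle subgroup of $\mathbb T$ whose orbits have length governed by $A(x)$ collapses as $x\to x_0$; a direct computation in the normal forms of Section~\ref{secNorm} identifies this collapsing generator, suitably normalized, with $-2\frac{p^{(x_0)}}{A'(x_0)}\in\mathfrak t$. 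The completion $\mathcal A_C^{g_0}$ is a Riemannian orbifold along $E$ precisely when this collapsing direction generates a closed (circle) subgroup of $\mathbb T=\mathfrak t/\Lambda$, i.e. when $-2\frac{p^{(x_0)}}{A'(x_0)}\in\Lambda$; an irrational slope would collapse the entire torus fibre rather than a single circle and destroy the orbifold structure. This is exactly the compatible-normal condition.

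I expect the third step to be the main obstacle: the reduction and the vanishing-order argument are routine given Lemma~\ref{lemEdgeDist}, but verifying that the collapsing generator is precisely $-2\frac{p^{(x_0)}}{A'(x_0)}$ and that ``orbifold'' is equivalent to ``this generator lies in $\Lambda$'' is the delicate toric/Delzant smoothness analysis. Rather than carry it out by hand, I would package it through Proposition~\ref{propAmbiCompBox}: build an auxiliary box-type ambitoric ansatz space that agrees with $\mathcal A$ near $E$ and whose three remaining edges are arranged to have compatible normals, so that its $g_0$-completion is a genuine ambitoric compactification exactly when $E$ too has compatible normal. Since the local completion along $E$ is inherited from $\mathcal A$ and is a smooth orbifold by hypothesis, the proposition then forces the normal of $E$ into $\Lambda$, completing the argument.
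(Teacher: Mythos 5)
Your proposal follows the same broad strategy as the paper: on an edge that is neither a fold nor part of $P$, the conformal factor relating $g$ to a preferred representative is positive, so the smooth orbifold extension transfers to that representative (the paper uses $g_+$ where you use $g_0$), and the lattice condition is then meant to come from a local application of Proposition \ref{propAmbiCompBox}. Your simple-zero observation ($A'(x_0)\neq 0$ via Lemma \ref{lemEdgeDist}) is a correct and pleasant addition, but it is not where the difficulty lies. The difficulty is in your final packaging step, and there the argument has a genuine gap. Proposition \ref{propAmbiCompBox} characterizes box-type components that are interiors of \emph{ambitoric compactifications}, and by definition an ambitoric compactification requires at least one of the K\"ahler metrics to extend to a \emph{toric K\"ahler metric} on the compact orbifold --- i.e.\ the K\"ahler structure, in particular $\omega_+$ (or $\omega_-$), must extend, not merely a Riemannian metric. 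Your hypothesis of $g$-completability yields only that the Cauchy completion is a Riemannian orbifold to which $g$, hence $g_0$ and $g_+$, extends smoothly; it gives no extension of $\omega_\pm$. Consequently the chain ``the completion of the auxiliary box is a smooth orbifold, therefore it is an ambitoric compactification, therefore the only-if direction of Proposition \ref{propAmbiCompBox} forces $E$ to have a compatible normal'' invokes the proposition with an unverified hypothesis, and as stated that step fails.

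This is precisely the point the paper's proof confronts explicitly: it notes that $g_+$ extends but $\omega_+$ is not known to, and then argues that the relevant direction of the proof of Proposition \ref{propAmbiCompBox} --- which runs through the construction of the space $V_p$ in Proposition 1 of \cite{apostolov2004hamiltonian} --- uses the symplectic structure only in that one construction, which can be carried out from the metric alone; hence the boundary conditions equivalent to the compatible-normal condition follow from the metric extension by itself. Without this observation (or, alternatively, without actually carrying out the collapsing-circle computation that you sketched and then explicitly deferred), your argument is incomplete. Two smaller imprecisions in that deferred sketch are worth flagging as well: ``generates a closed circle subgroup'' only forces the normal direction to be rational with respect to $\Lambda$, not that the specific vector $-2\,p^{(x_0)}/A'(x_0)$ lies in $\Lambda$; the orbifold condition also fixes the normalization, since a rational direction with the wrong scaling produces a cone angle not of the form $2\pi/m$, which is a cone singularity but not an orbifold point.
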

\begin{proof}
	Let $E$ be an edge which is neither a fold nor infinitely distant nor part of $P$. Since $\mathcal A$ is is $g$-completable, $E$ lies in $\mathcal A^{g}_C$ as a set of points with at worst orbifold singularities, and $g$ extends to $E$ as a smooth metric. Let $\phi(x,y)$ be the function such that $g_+=\phi(x,y)g$. Since $E$ is not a fold, $\phi(x,y)$ is a positive function along $E$. This implies that $g_+$ also extends to $E$ as a smooth metric. 
	
	We want to apply the previous proposition here. The proof of that proposition is local in nature, so we can apply it locally. More explicitly, if both $g_+$ and $\omega_+$ extend to $E$, then $E$ looks like a piece of an ambitoric compactification, so we can deduce that $E$ has a compatible normal. We have shown that $g_+$ extends, but not yet $\omega_+$. However, the proof of the previous proposition does not actually require that $\omega_+$ extends, although it certainly is necessary by the converse. To see this, let $p$ be a point on $E$. Without loss of generality, by switching the roles of $x$ and $y$ if necessary, $E$ is of the form $\{x=x_0\}$ and the $\mathbb T$-orbit $\mathbb T\cdot p$ is given by $\{x=x_0,y=y_0\}$. Let $\mathcal A_{y=y_0}=\{(x,y,\vec t)\in\mathcal A:y=y_0\}.$ Then the normal bundle to $\mathbb T\cdot p$ in $T_p\mathcal A_{y=y_0}$ is a vector space which coincides with the space $V_p$ in the proof of proposition $1$ of \cite{apostolov2004hamiltonian}. The construction of $V_p$ was the only part of that direction of that proof which used the symplectic structure, so we can use it to deduce boundary conditions which are shown in \cite{apostolov2013ambitoric2} to be equivalent to the condition that $E$ has a compatible normal.
\end{proof}

\begin{lemma}\label{lemEdgeFoldComp}
	Let $g\in\{g_0,g_\pm\}\cup\{g_p\}_{p\perp q}$. If $\mathcal A$ is $g$-completable, then every edge which is also a fold but neither infinitely distant nor part of $P$ must have a compatible normal and $g\in\{g_-,g_p\}$.
\end{lemma}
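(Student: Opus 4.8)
The plan is to prove the two conclusions separately, after first noting that an edge which is also a fold must be a \emph{negative} fold. The positive fold is the locus $\{1/f=0\}=\{x=y\}$, which is preserved as the diagonal by the gauge group (the gauge acts by the same M\"obius transformation in $x$ and $y$), and so can never be a level set $\{x=x_0\}$; hence if $E=\{x=x_0\}$ lies in the vanishing locus of $(x-y)q(x,y)$ then $f=q(x,y)/(x-y)$ vanishes on it, i.e.\ $q(x_0,\cdot)\equiv 0$. This forces $q$ to have a double root at $x_0$, so we are in the parabolic type, and after a M\"obius gauge change we may place $E$ at $\{x=\infty\}$ with $q\equiv 1$.

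To rule out $g\in\{g_0,g_+\}$ I would run the volume/length dichotomy of Lemma \ref{lemFoldComp} and Lemma \ref{lemAsyLength} in this gauge. From the table in the proof of Lemma \ref{lemFoldComp}, near $E=\{x=\infty\}$ one has $\Vol_{g_0}(\mathbb T_{(x,y)})\propto A(x)B(y)/(x-y)^2$ and $\Vol_{g_+}(\mathbb T_{(x,y)})\propto A(x)B(y)$. If $E$ lies at finite $g$-distance, then the radial integral controlling that distance (as in Lemma \ref{lemEdgeDist}) forces $A(x)$ to grow fast enough as $x\to\infty$ that the corresponding fibre volume diverges along $E$, contradicting that the orbits of $E$ have well-defined $g$-volume; if $A$ grows too slowly, then $E$ is infinitely distant, against hypothesis. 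By contrast the stronger denominators of $\Vol_{g_-}(\mathbb T_{(x,y)})\propto A(x)B(y)/(x-y)^4$ and $\Vol_{g_p}(\mathbb T_{(x,y)})\propto A(x)B(y)/p(x,y)^4$ (with $p(x,y)$ nonzero on $E$, since $E$ is not part of $P$) keep these bounded, so the argument excludes only $g_0$ and $g_+$. Hence $g\in\{g_-,g_p\}$.

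For the compatible normal I would adapt the proof of Lemma \ref{lemEdgeComp}, exchanging the roles of $+$ and $-$. There one extended the finite metric $g_+$ and applied the $V_p$-construction of \cite{apostolov2004hamiltonian}, which uses only the metric and not the K\"ahler form. Here $g_+$ blows up on the negative fold, but $g_-$ extends: when $g=g_p$ we have $g_-=\tfrac{p^2}{(x-y)^2}g_p$, a positive finite multiple of the extending metric along $E$, and when $g=g_-$ this is immediate. Working again in the parabolic gauge, the $g_-$-fibre metric is $\bigl(A(x)(dt_1+y\,dt_2)^2+B(y)(dt_1+x\,dt_2)^2\bigr)/(x-y)^3$; as $x\to\infty$ the second term tends to $0$ while the first blows up unless $A$ grows slowly, so a smooth extension forces the orbit to collapse, i.e.\ $A(x_0)=0$ in the gauge where $E$ is the finite level set $\{x=x_0\}$. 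Feeding this collapse through the metric-only $V_p$-construction as in Lemma \ref{lemEdgeComp} yields the lattice condition of Definition \ref{defCompNormal}, so $E$ has a compatible normal.

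The hard part is this last step. The negative fold is exactly the folding hypersurface $Z_-$ of $\omega_-$, so $\omega_-$ degenerates on $E$ and the ambitoric-compactification result Proposition \ref{propAmbiCompBox} cannot be cited directly; the collapse analysis must be run with the metric alone. One must also track the gauge carefully, because in the gauge placing $E$ at the finite double root $x_0$ of $q$ one has $q(\cdot,x_0)\equiv 0$, so the normal $-2\,p^{(x_0)}/A'(x_0)$ of Definition \ref{defCompNormal} degenerates to $0$ (correspondingly $\mu^-(E)$ is a single point rather than a segment). Thus the lattice part of the compatible-normal condition holds automatically, and the real content of the conclusion is the collapse condition $A(x_0)=0$ that the smoothness of the $g_-$-extension must be shown to force.
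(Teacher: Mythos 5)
Your proposal is correct and follows essentially the same route as the paper's proof: reduce to a negative fold in the parabolic case, rule out $g_0$ and $g_+$ by playing the $\mathbb T$-fibre volumes from Lemma \ref{lemFoldComp} against the distance criteria of Lemmas \ref{lemEdgeDist} and \ref{lemAsyLength}, and obtain the compatible normal by rerunning the metric-only $V_p$-argument of Lemma \ref{lemEdgeComp} with $(g_-,\omega_-)$ in place of $(g_+,\omega_+)$. The differences are cosmetic --- you place the edge at $\{x=\infty\}$ with $q\equiv 1$ where the paper uses the gauge $q(x,y)=xy$ with $E=\{x=0\}$, and you phrase the volume/distance dichotomy contrapositively --- while your closing observations (that $\omega_-$ genuinely degenerates on the fold, so only the metric-based argument is available, and that the normal of Definition \ref{defCompNormal} degenerates in the finite gauge) are accurate refinements of points the paper leaves implicit.
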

\begin{proof}
	Let $E$ be such an edge. An edge can only be a fold in the parabolic case, and the fold must be negative. We will change the gauge so that $q(x,y)=xy$. We consider the case when $E$ is given by $\{x=0\}$, while the case $\{y=0\}$ can be treated similarly. 
	
	To rule out $g\in\{g_0,g_+\}$, we use the fact that the volume of a $\mathbb T$-fibre on $E$ must be well-defined, as in lemma \ref{lemFoldComp}. First consider the case $g=g_0$. We saw in lemma \ref{lemFoldComp} that the $g_0$ volume of the $\mathbb T$-fibre at $(x,y)$ is given by $\frac{A(x)B(y)}{(x-y)^2x^2y^2}$. For this to be defined at $x=0$, $A(x)$ must vanish to at least second order at $0$. By lemma \ref{lemEdgeDist}, this implies that $E$ is infinitely distant with respect to $g_0$, contradicting the definition of $E$.
	
	Similarly for $g=g_+$, we find that $\frac{A(x)B(y)}{x^4y^4}$ must be well-defined, implying that $A(x)$ vanishes to order $4$ at $0$. A similar argument to lemma \ref{lemEdgeDist} shows that if $q(x,y)=xy$, then $E$ is infinitely distant with respect to $g_+$ if and only if there does not exist an $\epsilon>0$ such that the integral $\int_0^\epsilon\frac{xdx}{\sqrt{A(x)}}$ converges. This is the case when $A(x)$ vanishes to order $4$ at $0$, contradicting the definition of $E$ as not infinitely distant with respect to $g_+$.
	
	The fact that $E$e must have a compatible normal follows the same argument as the previous lemma, replacing $g_+$ and $\omega_+$ with $g_-$ and $\omega_-$.
\end{proof}

\begin{lemma}
	If $\mathcal A$ is $g_p$-completable, then every edge which is a part of $P$ is infinitely distant with respect to $g_p$.
\end{lemma}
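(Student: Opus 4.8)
The plan is to reduce to a transverse length estimate, exactly as in Lemmas \ref{lemAsyLength} and \ref{lemEdgeDist}, and to show that the blow-up of $g_p$ coming from the factor $p(x,y)^{-2}$ already forces every curve approaching such an edge to have infinite length.

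First I would record the algebraic consequence of $E\subseteq P$. Writing $E=\{x=x_0\}$ (the case $\{y=y_0\}$ is symmetric) and arranging $x_0$ finite by a gauge transformation, so that $A(x_0)=0$, the inclusion $E\subseteq P$ says $p(x_0,y)=0$ for every $y$ in the open range of $E$. As $p(x_0,\cdot)$ is affine it vanishes identically, and solving $p_0x_0+p_1=0=p_1x_0+p_2$ gives $p(z)=p_0(z-x_0)^2$, i.e.
\begin{equation*}
p(x,y)=p_0\,(x-x_0)(y-x_0),\qquad \langle p,p\rangle=0,\qquad P=\{x=x_0\}\cup\{y=x_0\}.
\end{equation*}
In particular $p$ vanishes to first order in $x$ along the interior of $E$, and $\langle p,q\rangle=0$ forces $q(x_0)=0$.

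Next I would estimate $g_p$-lengths. Since $g_0$ is block-diagonal in $dx$, $dy$ and the torus directions (see (\ref{eqnAmbiStructure})) and $g_p=\tfrac{(x-y)q(x,y)}{p(x,y)^2}g_0$ is conformal to it, the $dx^2$-coefficient of $g_p$ is $\tfrac{(x-y)q(x,y)}{p(x,y)^2A(x)}$. Hence for any curve $\gamma$ with $x\circ\gamma\to x_0$,
\begin{equation*}
\length_{g_p}(\gamma)\ \ge\ \int \sqrt{\tfrac{(x-y)q(x,y)}{p(x,y)^2 A(x)}}\,|dx|
\ =\ \int \frac{\sqrt{(x-y)q(x,y)}}{|p_0|\,|x-x_0|\,|y-x_0|\,\sqrt{A(x)}}\,|dx|.
\end{equation*}
On a neighbourhood of the interior of $E$ (away from corners) $|y-x_0|$ is bounded above and, when $E$ is not itself a fold, $(x-y)q(x,y)$ is bounded below by a positive constant because $g_p$ is a genuine metric there; since $A(x)$ is bounded above, $A(x)^{-1/2}$ is bounded below. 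The integrand is therefore bounded below by $c\,|x-x_0|^{-1}$, and $\int |x-x_0|^{-1}|dx|$ diverges as $x\to x_0$, so every such curve has infinite length and $E$ is infinitely distant. Equivalently one may invoke Lemma \ref{lemAsyLength} with $\phi=x-x_0$ (whose zero locus is $E$): the same computation gives $\|dx\|_{g_p}^2=\tfrac{p(x,y)^2A(x)}{(x-y)q(x,y)}\le C\,(x-x_0)^2$, which is the one-sided bound ($r=1$) that the infinitely-distant direction of that lemma actually uses.

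The delicate point, and the only place the argument is not immediate, is the possibility that $E$ is simultaneously a fold, where the lower bound $(x-y)q\ge c>0$ fails. This can occur only in the parabolic case with $q(z)=q_0(z-x_0)^2$, which together with $p(z)=p_0(z-x_0)^2$ makes $p$ a multiple of $q$ and hence $g_p$ a constant multiple of $g_+$. In that situation $q(x,y)$ also vanishes to first order in $x$ along $E$, so $(x-y)q(x,y)\asymp|x-x_0|$, while $A(x_0)=0$ and smoothness give $A(x)\le C|x-x_0|$; substituting these into the displayed integrand again produces a lower bound of order $|x-x_0|^{-1}$. Thus in every case the $|x-x_0|^{-1}$ divergence persists, and it is exactly the inequality $(x-y)q(x,y)\gtrsim A(x)$ near the interior of $E$ — trivial when $E$ is not a fold, and a consequence of $A(x_0)=0$ when it is — that I expect to be the crux of the write-up.
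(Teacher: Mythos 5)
Your overall strategy (a transverse length estimate fed into Lemma \ref{lemAsyLength}) is the same as the paper's, and your algebra is correct: $E\subseteq P$ does force $p(z)=p_0(z-x_0)^2$, hence $p(x,y)=p_0(x-x_0)(y-x_0)$ and $q(x_0)=0$, and $E$ is simultaneously a fold exactly when $q$ is proportional to $p$. But there is a genuine gap, and it is fatal rather than cosmetic: your argument never uses the hypothesis that $\mathcal A$ is $g_p$-completable, and the unconditional statement is false. Everything rests on your assertion ``so that $A(x_0)=0$'' (and its consequences: $A$ bounded near $E$ in the main case, $A(x)\le C|x-x_0|$ in the fold case). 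That is not part of being an edge: an edge only requires $x_0\in\partial\{A>0\}$, and $A$ need not extend continuously to $x_0$ at all --- for instance when $E$ comes from an endpoint of the domain of $A$, or from $\{x=\infty\}$, in which case the gauge transformation you invoke replaces $A$ by $\tilde A(\tilde x)=A(x)\bigl(\tfrac{d\tilde x}{dx}\bigr)^2$, which can blow up at the now-finite edge. Concretely, take the hyperbolic normal form with $p(z)=1$, so $g_p=(x^2-y^2)g_0$ and $P$ sits over $\{x=\infty\}\cup\{y=\infty\}$ (exactly the configuration of section \ref{secKilling}), with $B(y)=1-y^2$ and $A(x)=(1+x^2)^{5/2}$. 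A curve going out to $x=+\infty$ at fixed $y,\vec t$ has $g_p$-length $\asymp\int^\infty x\,A(x)^{-1/2}\,dx\asymp\int^\infty x^{-3/2}dx<\infty$, so this $P$-edge is \emph{not} infinitely distant; in the gauge $\tilde x=-1/x$ one has $\tilde A(\tilde x)\asymp|\tilde x|^{-1}$, so both of your bounds on $A$ fail. Of course this $\mathcal A$ is not $g_p$-completable --- but detecting that is precisely the role of the hypothesis your proof drops.

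The paper closes exactly this hole using completability. Arguing by contradiction, if the $P$-edge were not infinitely distant, then (completability) the $\mathbb T$-fibre volumes, which for $g_p$ are read off from the table in Lemma \ref{lemFoldComp} as proportional to $\frac{A(x)B(y)}{p(x,y)^4}$, would have to extend to finite values on $E$; since $p(x,y)$ vanishes only to first order in $x-x_0$ there, this forces $A(x)$ to vanish to order at least $4$ at $x_0$. This is the missing control on $A$, and it is the one step where $g_p$-completability enters. With it, $\|d(x-x_0)\|_{g_p}=\frac{|p(x,y)|\sqrt{A(x)}}{\sqrt{|q(x,y)(x-y)|}}\lesssim(x-x_0)^{3/2}$ in both your ``main'' and ``fold'' cases, and Lemma \ref{lemAsyLength} (whose infinitely-distant direction, as you rightly note, needs only this one-sided bound with exponent $\ge 1$) yields that $E$ is infinitely distant --- a contradiction. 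So your length computation is the correct second half of the argument, but the first half must be the fibre-volume/completability argument, not an appeal to $A(x_0)=0$.
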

\begin{proof}
	Let $E$ be an edge which is part of $P$ which is not infinitely distant with respect to $g_p$. We will treat the case where $E$ is given by $\{x=\gamma\}$ for some $\gamma\in\mathbb R$. The case where $E=\{y=\gamma\}$ can be treated similarly, and the case when $\gamma=\infty$ can be reduced to the finite case by a gauge transformation. Since $E$ is part of $P$, there exist some $c\in\mathbb R$ such that $p(x,y)=c(x-\gamma)(y-\gamma)$. Since $\mathcal A$ is $g_p$-completable and $E$ is not infinitely distant, we know that the volume of a $\mathbb T$-fibre on $E$ must be well defined. Again referring to the table from lemma \ref{lemFoldComp}, this volume is $\frac{A(x)B(y)}{p(x,y)^4}$. In order for this to be well-defined, $A(x)$ must vanish to order at least $4$ at $\gamma$.
	
	Now consider the function $\phi=x-\gamma$ on $\mathcal A$. We compute
	$\|d\phi\|_{g_p}=\frac{p(x,y)\sqrt{A(x)}}{\sqrt{q(x,y)(x-y)}}$.
	Since $p(x,y)$ vanishes to order $1$ on $E$ and $A(x)$ vanishes to order at least $4$, while $q(x,y)$ vanishes to order $1$ or $0$ (if $q=p$ or $q\neq p$ respectively), we find that $\|d\phi\|_{g_p}\gtrsim\phi^{\frac32}$. We now apply lemma \ref{lemAsyLength} to deduce the result.
\end{proof}

\subsubsection{Corners}

\begin{lemma}\label{lemCornerInfinite}
	If $\mathcal A$ is $g$-completable, then every corner where an infinitely distant edge or fold meets another edge or fold must be infinitely distant.
\end{lemma}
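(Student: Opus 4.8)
The plan is to show that \emph{every} curve in $\mathcal A$ limiting to the corner has infinite $g$-length, by bounding its length from below using only the direction transverse to the infinitely distant component. After relabelling $x\leftrightarrow y$ if necessary, and applying a gauge transformation so that the relevant coordinate is finite, I may assume the infinitely distant component is $E_1=\{x=x_0\}$ and that the corner is the $\mathbb T$-orbit $C=(x_0,y_0)$. A curve $\gamma\colon[0,1)\to\mathcal A$ limiting to $C$ in $\mathcal A_C^{g_f}$ satisfies $x\circ\gamma\to x_0$ and $y\circ\gamma\to y_0$; in particular $\gamma$ also limits to the level set $\{x=x_0\}$, which is what allows the infinite distance of $E_1$ to be brought to bear.

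The first step is the length estimate underlying Lemma \ref{lemAsyLength}. Writing $g=\phi\,g_0$ for the appropriate conformal factor (namely $\phi=1,f^{\mp1},\frac{(x-y)q(x,y)}{p(x,y)^2}$ for $g_0,g_\pm,g_p$), the block-diagonal form of $g_0$ in (\ref{eqnAmbiStructure}) gives $\|d(x-x_0)\|_{g}=\phi^{-1/2}\sqrt{A(x)}$. Passing to a transverse tail of $\gamma$ and reparametrising by $x$ exactly as in the proof of Lemma \ref{lemAsyLength}, I obtain $\length_g(\gamma)\gtrsim\int_{x_0}\phi(x,y)^{1/2}A(x)^{-1/2}\,dx$, evaluated along $\gamma$ where $y\to y_0$. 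The goal is then to show this integral diverges.

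The second step transfers the divergence from $E_1$ to $C$. That $E_1$ is infinitely distant means precisely that the same integral diverges when $y$ is frozen at an interior value of the edge; equivalently $\|d(x-x_0)\|_g\asymp(x-x_0)^r$ with $r\ge1$ at interior points of $E_1$. When $C$ does not lie on a fold, $\phi$ is a regular positive function near $C$, so $\phi(x,y)\asymp\phi(x,y_0)\asymp\phi(x,y_c)$ for interior $y_c$; the rate $r\ge1$ therefore persists up to $C$, and Lemma \ref{lemAsyLength} applied with the single function $x-x_0$ immediately yields that $C$ is infinitely distant. By Lemma \ref{lemFoldComp} together with the first bullet of Theorem \ref{thmClassCompletable}, $g$-completability forbids proper folds, so the only remaining case is that $C$ itself lies on a fold.

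The fold-corner case is the main obstacle, since there $\phi$ degenerates at $C$. When $\phi\to\infty$ (for instance $g_+$ at a negative fold) the integrand only grows and divergence is preserved a fortiori; the delicate subcase is $\phi\to0$ (for instance $g_-$ at a negative fold), where the transverse integrand is suppressed near $C$ and one must check that it remains non-integrable. I would resolve this by bookkeeping the vanishing orders: combining the order to which $A(x)$ vanishes at $x_0$ (forced to be large by $E_1$ being infinitely distant, via the edge-distance criteria of Lemma \ref{lemEdgeDist} and its analogues for $g_\pm,g_p$) with the order to which $q(x,y)$ or $(x-y)$ vanishes along $\{x=x_0\}$ at the fold, and using that $g$-completability keeps the $\mathbb T$-fibre volume $\Vol_g(\mathbb T_{(x,y)})$ from the table in Lemma \ref{lemFoldComp} well-defined along the edge, which constrains these orders so that $r\ge1$ up to $C$. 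Once the rate bound is established uniformly up to $C$, Lemma \ref{lemAsyLength} with $\phi_1=x-x_0$ concludes that $C$ is infinitely distant; as a robustness check one may alternatively argue by contradiction, placing $C$ in the Riemannian orbifold $\mathcal A_C^{g}$ and observing that a toric orbifold chart about the vertex $C$ would contain interior points of $E_1$ at finite $g$-distance.
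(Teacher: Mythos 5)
Your main line of attack---proving that \emph{every} curve limiting to $C$ has infinite $g$-length---is genuinely different from the paper's proof, and in the case you yourself flag as delicate it cannot be completed, because the length-divergence claim is simply false there. Concretely, take the hyperbolic form $q(x,y)=x+y$, the domain $(a,a+\delta)\times(-a,-a+\delta)\times\mathbb T$ with $a>0$, $A(x)=(x-a)^2$, $B(y)=y+a$, and $g=g_-$. The edge $E_1=\{x=a\}$ is infinitely distant: near any of its interior points $f=\frac{x+y}{x-y}$ is bounded below, so every curve reaching it has length $\gtrsim\int\frac{dx}{x-a}=\infty$. The edge $E_2=\{y=-a\}$ has $B$ vanishing to first order, so a compatible normal can be arranged by choosing $\Lambda$, and the fibre volume $\frac{A(x)B(y)}{(x-y)^4}$ from the table in Lemma \ref{lemFoldComp} extends continuously to the whole boundary. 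The corner $C=(a,-a)$ is a negative fold. Yet along the diagonal curve $s\mapsto(a+s,-a+s,\vec t_0)$ one has $f=s/a$, $A=s^2$, $B=s$, so the $g_-$-speed is $O(s^{-1/2})$ and the curve has \emph{finite} length: $C$ is at finite $g_-$-distance. This configuration satisfies every constraint your ``bookkeeping of vanishing orders'' invokes (the order of $A$ forced by Lemma \ref{lemEdgeDist}, well-defined fibre volumes, compatible normal on $E_2$), so no such bookkeeping can force the rate $r\ge1$ of Lemma \ref{lemAsyLength} up to $C$; the divergence you are trying to establish is not there. What rules this configuration out is not a metric estimate but the orbifold condition itself.

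That is exactly how the paper argues, and it is the content of the sentence you relegate to a ``robustness check'' at the end: assume $C$ lies in $\mathcal A_C^g$; since the infinitely distant adjacent component is absent from $\mathcal A_C^g$, a neighbourhood of $C$ in the completion has the form $\left(\left(\mathbb C\backslash\{0\}\right)\times\left(\mathbb C\backslash\{0\}\right)\right)\cup C$ when both adjacent components are infinitely distant, or $\left(\left(\mathbb C/\Gamma\right)\times\left(\mathbb C\backslash\{0\}\right)\right)\cup C$ when the other adjacent component is not infinitely distant (in which case Lemmas \ref{lemFoldComp}, \ref{lemEdgeComp} and \ref{lemEdgeFoldComp} force it to be an edge with compatible normal, hence with orbifold charts $\mathbb C/\Gamma\times\mathbb C$); here $C$ is glued in at $(0,0)$ as a subgroup of $\mathbb T$. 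Such a space is never an orbifold, contradicting $g$-completability. In the example above this is visible directly: the completion near $C$ contains $C$ and $E_2$ but is missing $E_1$, and a punctured neighbourhood of $C$ has infinite fundamental group, which an orbifold point cannot have. So your closing remark is not an alternative check---it is the only viable proof, and it is the paper's proof; the transverse length estimates that precede it are sound only in the easy case where $C$ is not a fold (where the conformal factor is pinched between positive constants near $C$), and should otherwise be discarded.
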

\begin{proof}
	Assume that two infinitely distant edges or folds meet at a corner $C$ which is not infinitely distant. We can find a small neighbourhood of this corner of the form $$((\mathbb C\backslash \{0\})\times(\mathbb C\backslash \{0\}))\cup C,$$
	where $C$ is glued into $(0,0)$ as a subgroup of $\mathbb T$. This is never an orbifold, contradicting $\mathcal A$ being $g$-completable.
	
	Similarly, let $C'$ be a corner where an infinitely distant edge or fold $E_\infty$ meets an edge or fold $E_f$ which is not infinitely distant. By lemmas \ref{lemFoldComp}, \ref{lemEdgeComp} and \ref{lemEdgeFoldComp}, $E_f$ must be an edge with a compatible normal. It follows that points of $E_f$ have neighbourhoods of the form $\mathbb C/\Gamma\times\mathbb C$, where $\Gamma$ is the orbifold covering group. It follows that $C$ has a neighbourhood of the form $$((\mathbb C/\Gamma)\times(\mathbb C\backslash \{0\}))\cup C,$$ where $C$ is glued into $(0,0)$ as a subgroup of $\mathbb T$. This is never an orbifold, contradicting $\mathcal A$ being $g$-completable.	
\end{proof}

\begin{lemma}\label{lemCornerFold}
	Let $g\in\{g_0,g_\pm\}\cup\{g_p\}_{p\perp q}$. If $\mathcal A$ is $g$-completable and $C$ is a corner at the intersection of two edges which are not infinitely distant, then if $C$ is a positive fold, then $g\in\{g_+,g_p\}$, while if $C$ is a negative fold, then $g\in\{g_-,g_p\}$. If $g=g_p$, then $C$ cannot be part of $P$.
\end{lemma}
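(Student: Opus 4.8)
The plan is to reduce the statement to an analysis of the $\mathbb{T}$-fibre volume near $C$, exactly as in Lemma \ref{lemFoldComp}, using the table of fibre volumes recorded there. First I would pin down what the hypotheses force about the local picture. Write $C$ at coordinates $(x_0,y_0)$. If $C$ is a positive fold then $x_0=y_0$ and $q(x_0,y_0)\neq 0$, whereas if $C$ is a negative fold then $q(x_0,y_0)=0$ and $x_0\neq y_0$. In either case the two edges meeting at $C$ are the level sets $\{x=x_0\}$ and $\{y=y_0\}$. Since they are not infinitely distant, are generically not themselves folds (by Lemma \ref{lemEdgeFoldComp} an edge is a fold only in the parabolic case, a degenerate situation I treat separately at the end), and are not part of $P$ (an edge in $P$ would be infinitely distant for $g_p$ by the preceding lemma), Lemma \ref{lemEdgeComp} forces both edges to have a compatible normal. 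By Definition \ref{defCompNormal} this means $A(x_0)=0$ with $A'(x_0)\neq 0$ and $B(y_0)=0$ with $B'(y_0)\neq 0$, so that $A(x)B(y)\asymp(x-x_0)(y-y_0)$ near $C$, the product being positive on $\mathcal A$.

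The key geometric observation is that, because $C$ sits at the intersection of two edges with compatible normal, it is a point of $\mathcal A_C^g$ at which the $\mathbb{T}$-fibre collapses: its orbifold neighbourhood is of the $\mathbb C^2/\Gamma$ type appearing in Lemma \ref{lemCornerInfinite}, so the two-dimensional volume of the orbit tends to $0$ along every path into $C$. Hence $\Vol_g(\mathbb T_{(x,y)})\to 0$ as $(x,y)\to C$. I would then test this against the volume formulas of Lemma \ref{lemFoldComp} along straight rays into the corner: set $x-x_0=a\tau$ and $y-y_0=b\tau$ with $\tau\to 0^+$, where $(a,b)$ is chosen so that $A>0$, $B>0$ and $q(x,y)(x-y)\neq 0$ hold along the ray. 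For a positive fold this forces $a,b$ of opposite sign, which is precisely the configuration in which two genuine edges (rather than one edge and the fold) bound $\mathcal A$ at $C$. Along such a ray $A(x)B(y)\asymp\tau^2$, while $(x-y)\to x_0-y_0$ in the negative case and $(x-y)\asymp\tau$ in the positive case, and likewise $q(x,y)\to q(x_0,y_0)$ or $q(x,y)\asymp\tau$ respectively.

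Substituting these asymptotics into the four entries of the table decides each metric. For a positive fold, $g_0$ gives $\Vol\asymp\tau^2/(\tau^2 q(x_0,y_0)^2)$, a nonzero constant, and $g_-$ gives $\Vol\asymp\tau^2/\tau^4\to\infty$, both contradicting $\Vol\to 0$; whereas $g_+$ gives $\Vol\asymp\tau^2/q(x_0,y_0)^4\to 0$ and $g_p$ gives $\Vol\asymp\tau^2/p(x_0,y_0)^4\to 0$, which are admissible. For a negative fold the roles of $(x-y)$ and $q$ are exchanged, ruling out $g_0$ and $g_+$ and leaving $g_-$ and $g_p$. This proves the first two assertions. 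For the last assertion, suppose $g=g_p$ and $C\in P$, so $p(x_0,y_0)=0$; using $dp(x,y)=\tfrac12 p'(y)\,dx+\tfrac12 p'(x)\,dy$ from Lemma \ref{lemPMomLine}, and noting that $C$ is not the centre of the conic $P$ so $dp\neq 0$ there, one gets $p(x,y)\asymp\tau$ along the ray, whence $\Vol_{g_p}\asymp\tau^2/\tau^4\to\infty$, again contradicting $\Vol\to 0$. Therefore $C\notin P$.

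The step I expect to be the main obstacle is justifying that the fibre volume must tend to $0$, rather than merely stay bounded, at the corner. This is exactly what rules out $g_0$ in the positive-fold case, where $A(x)B(y)/(x-y)^2$ is in fact bounded (by $\tfrac14$, via the inequality $(x-y)^2\geq 4(x-x_0)(x_0-y)$) yet does not vanish. I would secure this by identifying the orbifold model at $C$: the two compatible normals give commuting circle degenerations whose common zero is the fixed point underlying $C$, so the local model is $\mathbb C^2/\Gamma$ and the orbit's $2$-volume is comparable to the product of the two radial distances, forcing it to $0$ along every approach. A secondary point needing care is the borderline parabolic configuration in which an edge through $C$ is itself a (negative) fold; there the orbit may collapse only to a circle and $q$ degenerates at $C$, so instead of the ray computation I would invoke Lemma \ref{lemEdgeFoldComp} directly, handling it as a separate degenerate case.
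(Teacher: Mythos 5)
Your proposal follows essentially the same route as the paper's proof: the edge lemmas force both adjacent edges to have compatible normals (so $A$ and $B$ vanish to first order at $C$), the corner orbit must degenerate so its $\mathbb T$-fibre volume tends to zero, and the fibre-volume table of Lemma \ref{lemFoldComp} then eliminates the excluded metrics (with the $C\subset P$ case handled the same way using $p$). Your explicit ray asymptotics and the orbifold-model justification of the volume collapse are simply detailed versions of what the paper compresses into ``for topological reasons \dots\ the volume of the fibre at $C$ must vanish'' and its appeal to the volume table, so the two arguments coincide in substance.
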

\begin{proof}
	We will prove the case that $C$ is a positive fold. The case when $C$ is a negative fold or part of $P$ is treated similarly. By lemmas \ref{lemEdgeComp} and \ref{lemEdgeFoldComp}, the edges adjacent to $C$ must have compatible normals. This implies that $A(x)$ and $B(y)$ both vanish to order $1$ at $C$. For topological reasons, $C$ cannot be a free $\mathbb T$ fibre with respect to $g$. In particular, the volume of the fibre at $C$ must vanish. Again we refer to the table of fibre volumes in lemma \ref{lemFoldComp} to deduce that $g\in\{g_+,g_p\}$. 
\end{proof}

Now we can combine all of our asymptotic analysis to prove our classification of completable ambitoric ansatz spaces:

	\begin{proof}[Proof of theorem \ref{thmClassCompletable}]
		First assume that $\mathcal A$ is $g$ completable. By lemma \ref{lemFoldComp}, every proper fold must be infinitely distant, but by lemma \ref{lemProperFoldDist} no proper fold is infinitely distant. Thus $\mathcal A$ can have no proper folds, which is the first statement. The next two statements are the result of lemmas \ref{lemEdgeComp} and \ref{lemEdgeFoldComp}. 
		
		By lemma \ref{lemCornerFold}, every corner is infinitely distant unless it is at the intersection of two folds or edges which are not infinitely distant. However, we've seen that a fold or an edge which is not infinitely distant must be an edge with a compatible normal. This added to lemma \ref{lemCornerFold} gives the last condition.
		
		Conversely, assume that the stated conditions hold. The first condition combined with lemma \ref{lemPropPDist} allows us to decompose $\mathcal A^g_C$ into edges and corners.
		
		The second condition implies that each edge which is not infinitely distant has a compatible normal. Let $E$ be such an edge. Since the proof of proposition \ref{propAmbiCompBox} is local in nature, it allows us to deduce that for one of the metrics $g_\pm$, $E$ lies in $\mathcal A^{g_\pm}_C$ as a set of points with at worst orbifold singularities, and $g_\pm$ extends to $E$ as a smooth metric. In the case that $E$ is not a fold, then the conformal factor relating $g$ to $g_\pm$ extends to a positive function on $E$, so that $g$ also extends to $E$ as a smooth metric. If $E$ is a fold, then the choice of $g_\pm$ must have been $g_-$ by the third condition. The third condition also ensures that $g = \eta(x,y)g_-$, for some $\eta(x,y)\in\{1,\frac{(x-y)^2}{p(x,y)^2}\}$. Since $\eta(x,y)$ is positive on $E$, $g$ extends to $E$ as a smooth metric.
		
		Finally, let $C$ be a corner which is not infinitely distant. By the last condition, $C$ must lie at the intersection of two edges which are not infinitely distant, and thus have compatible normals. As in the previous paragraph, we can locally apply proposition \ref{propAmbiCompBox} to deduce that for one of $g_\pm$, $C$ lies in $\mathcal A^{g_\pm}_C$ as a point with at worst an orbifold singularity. The conditions that we put on $g$ if $C$ is a fold ensures that the conformal factor relating $g$ to $g_\pm$ on $\mathcal A$ extends to $\mathcal C$ as a positive function, ensuring that $g$ extends smoothly to $C$. 
		
		We've shown that every point in $\mathcal A^{g}_C$ has at worst orbifold singularities, and that $g$ extends to a smooth metric on $\mathcal A^{g}_C$. Thus $\mathcal A$ is $g$-completable as claimed.
		
		To prove the \emph{moreover}, first note that if the ambitoric structure extends to $\mathcal A_C^{g}$, then the function $f$ is a conformal factor between the metrics $g_+$ and $g_0$ on $\mathcal A_C^{g}$. In particular, $f$ must be a smooth positive function on $\mathcal A_C^{g}$. It follows that $\mathcal A_C^{g}$ has no folds. In other words, every fold is infinitely distant.
		
		Conversely, if every fold is infinitely distant, then the functions $q(x,y)$ and $(x-y)$ extend to smooth positive functions on $\mathcal A_C^{g}$. We can use these functions to construct the conformal factor between $g_\pm$ and $g_-$ on $\mathcal A_C^{g}$, so that $g_\pm$ both extend as smooth metrics to $\mathcal A_C^{g}$. Similarly, by considering the expressions for $\omega_\pm$ in (\ref{eqnAmbiStructure}), these also extend to $\mathcal A_C^{g}$, so that $\mathcal A_C^{g}$ is ambitoric.
	\end{proof}

\subsection{Classification of regular ambitoric orbifold completions}

In this section, we will use the results of the previous section to study regular ambitoric orbifold completions (recall that these are defined in the introduction). In the following section, we will extend the local classification of the set of free orbits of regular ambitoric orbifold completion (corollary \ref{corLocClass}) to a local classification on the completion. In the following section, we modify the tool of the Lokal-global-Prinzip for convexity theorems \cite{hilgert1994coadjoint}. Finally, we show how this modified Lokal-global-Prinzip can be applied to get classification results for regular ambitoric orbifold completions.
 
 \subsubsection{Local asymptotics}\label{secLocAsy}
 
 We will first work with local embeddings. Recall corollary \ref{corLocClass}:

\begin{cor}
	Let $(M,[g_0],J_+,J_-,\mathbb T)$ be a regular ambitoric $4$-manifold freely acted on by a $2$-torus $\mathbb T$. Then for any point $p\in M$, there exists a quadratic polynomial $q(z)$, functions $A(z)$ and $B(z)$, a $\mathbb T$-invariant neighbourhood $U$ of $p$, and an ambitoric embedding $\phi:U\hookrightarrow\mathcal A(q,A,B,\mathbb T)$.
\end{cor}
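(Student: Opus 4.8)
The plan is to reduce the statement directly to the local normal form of Theorem~\ref{thmAmbitoric} and then promote the resulting chart to a genuinely $\mathbb T$-equivariant embedding defined on a full orbit neighbourhood. First I would observe that freeness of the $\mathbb T$-action forces the Killing vector fields $\mathfrak t$ to be pointwise linearly independent, so $\mathfrak t$ has maximal rank everywhere on $M$. In particular every point $p$ lies in the open dense set $\mathring M$ on which Theorem~\ref{thmAmbitoric} applies, and regularity (Definition~\ref{defRegAmbi}) rules out the Calabi-type case, so the relevant normal form really is \eqref{eqnAmbiStructure} rather than the Calabi ansatz. Applying the theorem at $p$ produces $\mathfrak t$-invariant functions $x,y$, a quadratic polynomial $q(z)$, a $\Lambda^1(M)$-valued quadratic polynomial $d\tau(z)$, and one-variable functions $A(z),B(z)$ realizing the ambitoric structure in the form \eqref{eqnAmbiStructure} on some neighbourhood $U'$ of $p$.

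Next I would assemble these data into an ambitoric map into $\mathcal A(q,A,B,\mathbb T)$. The functions $x,y$ are $\mathfrak t$-invariant, so after shrinking $U'$ they descend to coordinates on the local quotient $U'/\mathbb T$. Because the $\mathbb T$-action is free, $U'\to U'/\mathbb T$ is a locally trivial principal $\mathbb T$-bundle, so a local section furnishes a $\mathbb T$-valued fibre coordinate $\vec t$; the $\mathfrak t$-valued one-form $\boldsymbol\theta$, characterized by $\boldsymbol\theta\circ\boldsymbol K=\Id_{\mathfrak t}$ and $\boldsymbol\theta|_{\mathfrak t_{\mathring M}^\perp}=0$, is then used to align $\vec t$ with the $d\tau(z)$ occurring in \eqref{eqnAmbiStructure}. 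The triple $(x,y,\vec t)$ defines $\phi:U'\to\mathbb R^2\times\mathbb T$, whose image lands in $\mathcal A(q,A,B,\mathbb T)$ precisely because positive-definiteness of $g_0$ and nondegeneracy of $\omega_\pm$ in \eqref{eqnAmbiStructure} force $A(x)>0$, $B(y)>0$ and $q(x,y)(x-y)\neq0$ there. By construction $\phi$ pulls the normal-form ambitoric structure on $\mathcal A$ back to that of $M$, so $\phi$ is ambitoric.

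Finally I would enlarge $U'$ to the saturation $U:=\mathbb T\cdot U'$ and extend $\phi$ equivariantly. Since the whole ambik\"ahler structure, together with $x,y$, is $\mathbb T$-invariant and $\vec t$ was built to intertwine the $\mathbb T$-action on $M$ with translation on the $\mathbb T$-factor of $\mathcal A$, the map $\phi$ extends uniquely to a $\mathbb T$-equivariant map on $U$. After possibly shrinking $U'$ so that $(x,y)$ separates orbits on $U'/\mathbb T$, freeness of the action guarantees that $\phi$ is injective and an immersion along each fibre, hence an ambitoric embedding.

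The main obstacle is not the existence of the normal form---that is exactly Theorem~\ref{thmAmbitoric}---but the passage from a $\mathfrak t$-invariant local chart to an honest $\mathbb T$-equivariant embedding on a full orbit. The two delicate points are the construction of a single-valued $\mathbb T$-valued coordinate compatible with \eqref{eqnAmbiStructure}, which rests on local triviality of the principal $\mathbb T$-bundle together with the period lattice $\Lambda\subset\mathfrak t$, and the injectivity of the assembled $\phi$ on the saturated neighbourhood; both are resolved cleanly by the freeness and compactness of the torus orbits once $U'$ has been shrunk so that $(x,y)$ separates orbits.
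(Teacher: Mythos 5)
Your proposal is correct and follows essentially the same route as the paper's proof: freeness of the action gives maximal rank of $\mathfrak t$ everywhere, so Theorem \ref{thmAmbitoric} applies at $p$ to give a chart into some $\mathcal A(q,A,B,\mathbb T)$, which is then extended $\mathbb T$-equivariantly to the saturation $U=\mathbb T\cdot U'$. The paper's proof is just a terser version of this; your extra care with the fibre coordinate $\vec t$, the inclusion of the image in $\mathcal A$, and injectivity after shrinking fills in details the paper leaves implicit.
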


We can use the following lemma to glue these local embeddings together:

\begin{lemma}\label{lemCechEmbedding}
	Let $(M,[g_0],J_+,J_-,\mathbb T)$ be a regular ambitoric $4$-manifold with a free $\mathbb T$-action such that $M/\mathbb T$ is contractible. Then there exists a $\mathbb T$-equivariant local embedding $\phi:M\to(\mathbb{RP}^1)^2\times\mathbb T$ such that for any $p\in M$, there exists an ambitoric embedding  $\phi_p:U_p\hookrightarrow\mathcal A_p$ of a neighbourhood $U_p$ of $p$ into an ambitoric ansatz space $\mathcal A_p$ such that the following diagram commutes:
	\begin{center}
		
	\begin{tikzpicture}[scale=1.5]
	\node (A) at (0,1) {$M$};
	\node (B) at (2,1) {$(\mathbb {RP}^1)^2\times\mathbb T$};
	\node (C) at (0,0) {$U_p$};
	\node (D) at (2,0) {$\mathcal A_p$};

	\path[->,font=\scriptsize,>=angle 90]
	
	(A) edge node[above]{$\phi$} (B)
	(C) edge [right hook->] (A)
	(C) edge [right hook->] node[above]{$\phi_p$} (D)
	(D) edge [right hook->] (B);
	\end{tikzpicture}
	\end{center}
	Moreover, if $\phi$ is injective and for each $z\in\mathbb{RP}^1$, $\phi(M)\cap\left(\{z\}\times\mathbb{RP}^1\times\mathbb T\right)$ and $\phi(M)\cap\left(\mathbb{RP}^1\times\{z\}\times\mathbb T\right)$ are connected or empty, then $\phi$ is an ambitoric embedding into some ambitoric ansatz space $\mathcal A$.
\end{lemma}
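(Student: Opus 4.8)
The plan is to take for granted the first part of the lemma, which supplies the $\mathbb{T}$-equivariant local embedding $\phi$ together with local ambitoric embeddings $\phi_p\colon U_p\hookrightarrow \mathcal{A}_p=\mathcal{A}(q_p,A_p,B_p,\mathbb{T})$ fitting into the stated commutative diagram, and to show that under the two hypotheses all of the ansatz spaces $\mathcal{A}_p$ can be taken to be one and the same space $\mathcal{A}(q,A,B,\mathbb{T})$. First I would observe that since $M$ and $(\mathbb{RP}^1)^2\times\mathbb{T}$ both have dimension $4$, a local embedding is a local diffeomorphism, and an injective local diffeomorphism is a diffeomorphism onto an open subset; thus $\phi$ is already an open embedding. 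It therefore suffices to produce global data $(q,A,B)$ with $\phi(M)\subseteq\mathcal{A}(q,A,B,\mathbb{T})$, after which the ambitoric statement is automatic: being ambitoric is a local condition, so once each $\mathcal{A}_p$ is identified with $\mathcal{A}$ the commutative diagram forces $\phi$ to intertwine the full structure.

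The engine of the gluing is the \emph{faithfulness} of the gauge group. By the commutative diagram, $\phi|_{U_p}$ and $\phi|_{U_{p'}}$ induce the same global functions $x,y,\vec t$, so on any overlap $U_p\cap U_{p'}$ the two local presentations already agree in $(x,y)$. By Lemmas \ref{lemRatLin} and \ref{lemGaugeEqns} and the identification of the gauge group in Section \ref{secGauge} with $\mathbb{P}\SL_2\mathbb{R}$ acting through $\tilde x=\frac{ax+b}{cx+d}$, $\tilde y=\frac{ay+b}{cy+d}$, the transition between two such presentations is a gauge transformation, and this group acts faithfully on $(x,y)$. Hence a transition fixing $(x,y)$ is the identity, and therefore $q_p$, $d\tau$, $A_p$, $B_p$ all coincide on $U_p\cap U_{p'}$. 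In particular the $q_p$ are quadratic polynomials in the common variables $(x,y)$ agreeing on nonempty open overlaps; since two quadratic polynomials agreeing on a nonempty open set coincide and $M/\mathbb{T}$ is connected, they glue to a single global quadratic $q$ with $q_p=q$ for every $p$, and the attendant $d\tau$ is determined by $q$ together with the global torus coordinate $\vec t$.

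The delicate point — and the place where the connectedness hypothesis is essential — is the single-variable functions $A$ and $B$. I would define $A$ on $x(M)\subseteq\mathbb{RP}^1$ by $A(z):=A_p(z)$ for any $p$ with $z\in x(U_p)$, and symmetrically $B$ on $y(M)$. Well-definedness is exactly the assertion that any two points of $M$ with $x$-value $z$ yield the same $A_p(z)$. Using injectivity of $\phi$, the fibre $\{x=z\}\subseteq M$ is carried homeomorphically onto $\phi(M)\cap(\{z\}\times\mathbb{RP}^1\times\mathbb{T})$, which by hypothesis is connected or empty; covering this connected fibre by charts $U_p$ and chaining along it, on each overlap the local functions agree (the transition is trivial, as above), so the common value $A(z)$ propagates around the whole fibre. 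This yields a well-defined smooth positive $A$ on the connected set $x(M)$, and likewise $B$ on $y(M)$. \textbf{This is the main obstacle:} without connectedness of the $x$- and $y$-slices, $M$ could wrap over the $(x,y)$-plane so that a single value of $x$ is realized by geometrically distinct fibres carrying incompatible functions $A_p$, and no ansatz presentation would exist; so the hypothesis is genuinely necessary rather than merely convenient, and it is what distinguishes the one-variable functions (which need the slice hypothesis) from $q$ (which needs only connectedness of $M/\mathbb{T}$).

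Finally I would assemble the pieces. With the global quadratic $q$ and the global positive functions $A,B$, every $\mathcal{A}_p$ equals $\mathcal{A}:=\mathcal{A}(q,A,B,\mathbb{T})$; on $\phi(M)$ we have $A(x)>0$, $B(y)>0$, and $q(x,y)(x-y)\neq 0$, the last because $\mathbb{T}$ acts freely and hence $\omega_\pm$ are nondegenerate there, so $\phi(M)\subseteq\mathcal{A}$. By the commutative diagram, $\phi|_{U_p}$ is the composite of $\phi_p$ with the inclusion $\mathcal{A}\hookrightarrow(\mathbb{RP}^1)^2\times\mathbb{T}$ and is therefore an ambitoric embedding into $\mathcal{A}$ for each $p$; since the $U_p$ cover $M$ and the ambitoric conditions are local, $\phi\colon M\to\mathcal{A}$ preserves the entire ambitoric structure. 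Together with the fact that $\phi$ is an open embedding, this exhibits $\phi$ as an ambitoric embedding into the single ambitoric ansatz space $\mathcal{A}$, as claimed.
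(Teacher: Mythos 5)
The proposal is incomplete: it proves only the \emph{moreover} clause. You begin by ``taking for granted'' the existence of the $\mathbb T$-equivariant local embedding $\phi$ and of the compatible local ambitoric charts $\phi_p$, but that existence statement is the first conclusion of the lemma, not a hypothesis, and it is precisely the part where contractibility of $M/\mathbb T$ does real work. The paper proves it by a \v{C}ech argument: corollary \ref{corLocClass} covers $M$ by $\mathbb T$-invariant charts $\phi_\alpha\colon U_\alpha\to\mathcal A_\alpha$; by section \ref{secGauge} the transitions $\phi_\beta\circ\phi_\alpha^{-1}$ are given by elements of $\mathbb P\SL_2\mathbb R$, so the cover determines a \v{C}ech cocycle with values in $\mathbb P\SL_2\mathbb R$; since the charts are $\mathbb T$-equivariant this cocycle is $\mathbb T$-invariant and descends to $M/\mathbb T$, where it is trivial because $M/\mathbb T$ is contractible; triviality of the cocycle is exactly the statement that a single global gauge can be chosen, so that the $\phi_\alpha$ glue to the desired map $\phi$. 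In your write-up contractibility enters only through its weaker consequence, connectedness of $M/\mathbb T$ (to glue $q$); the cohomological use --- killing the gauge monodromy so that a single-valued $\phi$ exists at all --- is absent, and without it there are no global coordinate functions $x,y$ whose slices $\{x=z\}$ you could even restrict to in the second half. This is a missing idea, not a stylistic omission.

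For the part you do prove, your argument is correct and is essentially the paper's: injectivity of $\phi$ plus connectedness of each slice $\phi(M)\cap\left(\{z\}\times\mathbb{RP}^1\times\mathbb T\right)$ lets the value $A(z)$ propagate along the slice by chaining overlapping charts, and $q$ glues because a quadratic polynomial is determined by its restriction to any open subset of a connected domain. Your additional observation --- that the commutative diagram forces chart transitions to fix $(x,y)$, hence to be trivial by faithfulness of the $\mathbb P\SL_2\mathbb R$-action (a fractional-linear map fixing an open set of values is the identity, after which lemma \ref{lemGaugeEqns} forces $\tilde q=q$, and the restriction of $g_0$ to the line bundles forces $\tilde A=A$, $\tilde B=B$) --- is a worthwhile detail: in the paper this agreement on overlaps is instead built into the construction of the $\phi_p$ from the global gauge chosen in the first half, which is another reason the first half cannot be skipped.
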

\begin{proof}
	From corollary \ref{corLocClass}, $M$ is covered by $\mathbb T$-invariant charts $\{\phi_\alpha:U_\alpha\to\mathcal A_\alpha\}_\alpha$ from $M$ into ambitoric ansatz spaces $\{\mathcal A_\alpha\}_\alpha$.		
	From section \ref{secGauge}, if $U_\alpha\cap U_\beta\neq\emptyset$, then $$\phi_\beta\circ\phi_\alpha^{-1}:\phi_\alpha(U_\alpha\cap U_\beta)\to\phi_\beta(U_\alpha\cap U_\beta)$$ is given by some element of $\mathbb P\SL_2\mathbb R$. Thus the maps $\{\phi_\alpha\}_\alpha$ induce a \u Cech co-cycle $C$ in $H^1(M,\mathbb P\SL_2\mathbb R)$. Since the $\phi_\alpha$ are $\mathbb T$-equivariant, $C$ is $\mathbb T$ invariant. Thus $C$ is equivalent to a \u Cech co-cycle $\bar C$ in $H^1(M/\mathbb T,\mathbb P\SL_2\mathbb R)$. Since $M/\mathbb T$ is contractible, $\bar C$, and hence $C$, must be trivial. This means that the gauge can be chosen globally, so that the $\phi_\alpha s$ can be glued to form a map $\phi:M\to(\mathbb{RP}^1)^2\times\mathbb T.$
	
	Now let $p\in M$. Then $p\in U_\alpha$ for some alpha. Transforming the gauge of $U_\alpha\hookrightarrow\mathcal A_\alpha$ to match the global gauge used to construct $\phi$ results in an embedding $U_p\hookrightarrow\mathcal A_p$ which has the desired properties by construction.
	
	To prove the \emph{moreover}, assume that $\phi$ has the required properties. We need to show that there exist functions $A(x)$, $B(y)$ and $q(x,y)$ on the image of $\phi$ which agree with those given be the local embeddings $\{\phi_p\}_{p\in M}$. For each $x_0\in\mathbb{RP}^1$, since $\phi(M)\cap\left(\{x_0\}\times\mathbb{RP}^1\times\mathbb T\right)$ is connected, the $\{\phi_p\}_{p\in M}$ must agree on the value of $A(x_0)$. Since $x_0$ was arbitrary, it follows that $A(x)$ is uniquely determined on $\phi(M)$. Similarly $B(y)$ is uniquely determined on $\phi(M)$. The fact that $q(x,y)$ is uniquely determined follows from the fact that it is a quadratic function, thus determined on a connected set by its value on any open subset.
\end{proof}

We will have to work a little harder to get asymptotic information from our local ambitoric charts.

Let $(M, [g_0],J_+,J_-,\mathbb T)$ be a regular ambitoric $4$-manifold with free $\mathbb T$-orbits.
Let $\{\lambda_1,\lambda_2\}$ be a set of generators of $\Lambda$. $$\lambda_1^2+\lambda_2^2\in\Sym^2(\mathfrak t)\cong\Sym^2(\mathfrak t^*)^*$$ is a Riemannian metric on $\mathfrak t^*$, identifying $\mathfrak t^*$ with Euclidean space.
The momentum map $\mu^+:M\to\mathfrak t^*$ is $\mathbb T$-invariant, so it descends to an \emph{orbital momentum map} $\bar{\mu}^+:M/\mathbb T\to\mathfrak t^*,$ which is a local embedding. Thus $\bar g_E:=(\bar{\mu}^+)^*(\lambda_1^2+\lambda_2^2)$ is a Riemannian metric on $M/\mathbb T$. Since $\mathfrak t$ is the tangent space to $\mathbb T$, $(\lambda_1^2+\lambda_2^2)^{-1}$ is a Riemannian metric on $\mathbb T$. Thus 
$$g_E:=  ({\mu}^+)^*(\lambda_1^2+\lambda_2^2)+(\lambda_1^2+\lambda_2^2)^{-1}$$
is a $\mathbb T$-invariant metric on $M$ which descends to the metric $\bar g_E$ on $M/\mathbb T$. $g_E$ will be used in the same way at the flat metric $g_f$ was used for ambitoric ansatz spaces.

\begin{lemma}\label{lemLocAmbiBdry}
	Let $(M,\mathring M,g,J_+,J_-,\mathbb T)$ be a regular ambitoric orbifold completion, where $g\in\{g_0,g_\pm\}\cup\{g_p\}_{p\perp q}$. Then every point $a$ in $\partial^{g_E}\mathring M$ has a neighbourhood in $\mathring M_C^{g_E}$ which can be identified with an open set in $\mathcal A^{g_f}_C$ for some ambitoric ansatz space $\mathcal A$ such that $a$ is identified with a point in $\partial^{g_f}\mathcal A$.
\end{lemma}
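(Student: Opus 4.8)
The plan is to reduce the statement to a comparison on a single ambitoric ansatz space. The metric $g_E$ is built intrinsically from $\mu^+$ and the lattice $\Lambda$, so it is preserved by any ambitoric embedding; this lets me transport a neighbourhood of $a$ into an ansatz space isometrically and then compare the $g_E$- and $g_f$-Cauchy completions there. Concretely, since $a\in\partial^{g_E}\mathring M$ lies at finite $g_E$-distance, I fix a $g_E$-Cauchy sequence $p_n\in\mathring M$ with $p_n\to a$ and a small neighbourhood $W$ of $a$ in $\mathring M_C^{g_E}$. Because $\bar\mu^+$ is a local embedding and $\bar g_E=(\bar\mu^+)^*(\lambda_1^2+\lambda_2^2)$ is the Euclidean metric read through $\bar\mu^+$, I can shrink $W$ so that $(W\cap\mathring M)/\mathbb T$ maps to a contractible region of $\mathfrak t^*$ and is itself contractible (near a fold this uses that, on a single sheet, the momentum image is locally a region bounded by the smooth conic $\mathcal C_+$). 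Corollary \ref{corLocClass} together with the gluing Lemma \ref{lemCechEmbedding} then yield a single ambitoric embedding $\phi\colon W\cap\mathring M\hookrightarrow\mathcal A$ for some $\mathcal A=\mathcal A(q,A,B,\mathbb T)$.

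Next I would observe that $\phi$ is a $g_E$-isometry onto its image. An ambitoric embedding is $\mathbb T$-equivariant for the same torus and preserves $\omega_+$, hence it preserves $\mu^+$ (up to the additive constant inherent in its definition) and the lattice $\Lambda$, so $\phi^*g_E^{\mathcal A}=g_E$. Thus $\phi$ extends to an isometric embedding of the $g_E$-completions near $a$, sending $a$ to a point $a'\in\mathcal A_C^{g_E}$. The point $a'$ cannot be interior to $\mathcal A$: an interior point of $\mathcal A$ is a free $\mathbb T$-orbit, so $\phi^{-1}$ would extend the ambitoric structure across $a$ and place $a$ in $\mathring M$, contradicting $a\in\partial^{g_E}\mathring M$. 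Hence $a'$ is a boundary point, that is, an edge, fold, or corner lying in $\partial^{g_f}\mathcal A$.

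It then remains to identify a neighbourhood of $a'$ in $\mathcal A_C^{g_E}$ with an open subset of $\mathcal A_C^{g_f}$. Since $\mathcal A_C^{g_f}$ is compact, proposition \ref{propCauchyComparisons} already represents $g_E$-completion points by $g_f$-completion points, and I would upgrade this to a local homeomorphism by comparing the two metrics directly. In the momentum coordinates $(\mu_1^+,\mu_2^+)$ the base metric $\bar g_E$ is exactly flat. At a finite-distance edge or corner not lying on a fold, the normal forms of section \ref{secNorm} show that $\mu^+$ has non-vanishing Jacobian onto a neighbourhood of the corresponding edge or vertex of the momentum image (this includes $x$ or $y$ infinite in the hyperbolic and elliptic cases), so $g_E\asymp g_f$ there and the completions are locally bi-Lipschitz, hence homeomorphic, near $a'$. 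At a (finite-distance, $Z_+$-type) fold, $\mu^+$ develops a Whitney fold but restricts to a diffeomorphism on each sheet, so $\bar g_E$ is flat in the $\mu^+$-coordinates while $g_f$ is flat in $(x,y)$; the two single-sheet completions are then matched by the continuous reparametrization straightening the fold, a homeomorphism sending the fold of $\mathcal A_C^{g_f}$ to $a'$.

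The crux of the argument is exactly this degenerate locus. Away from folds the identification is a routine uniform-equivalence argument, but at a fold the Jacobian of $\mu^+$ vanishes, so $g_E$ and $g_f$ are genuinely inequivalent and the identification survives only as a homeomorphism produced by an explicit change of momentum coordinates. I expect the hardest case to be a corner that also lies on a fold, where two facet directions and the fold direction meet; there one must check by hand, using the relevant normal form of section \ref{secNorm}, that the resulting map is a $\mathbb T$-equivariant homeomorphism onto an open subset of $\mathcal A_C^{g_f}$ carrying $a$ to $a'$.
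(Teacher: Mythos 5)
Your argument reproduces, in essence, the paper's treatment of what it calls the contractible case: embed a $\mathbb T$-invariant neighbourhood of $a$ into an ansatz space via Corollary \ref{corLocClass} and Lemma \ref{lemCechEmbedding}, show the image of $a$ cannot be an interior point of $\mathcal A$, and identify the $g_E$- and $g_f$-completions near the image point (your fold-straightening discussion is a more explicit version of the paper's one-line claim that $\phi^*g_f$ and $g_E$ induce the same topology on $\hat U_C^{g_E}$, and your observation that ambitoric embeddings essentially preserve $g_E$ is a clean way to organize that step). One local weakness: your reason that the image of $a$ is not interior ("$\phi^{-1}$ would extend the ambitoric structure across $a$") skips the mechanism by which the paper actually concludes $a\in\mathring M$, namely that $\phi_*(g|_{\mathring M})$ is one of the special metrics and hence extends smoothly past an interior point, so that via proposition \ref{propCauchyComparisons} the point $a$ corresponds to a free orbit in $\partial^{g}\hat U\subseteq M$, which must lie in $\mathring M$; this is also the only place the hypothesis $g\in\{g_0,g_\pm\}\cup\{g_p\}_{p\perp q}$ is used, and your proof never invokes it.

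The genuine gap is at your very first step: you assert that $W$ can be shrunk so that $(W\cap\mathring M)/\mathbb T$ is contractible, justified only by $\bar\mu^+$ being a local embedding and $\bar g_E$ being flat in momentum coordinates. That is exactly what can fail a priori, and the paper spends the entire second half of its proof ruling it out. The problematic scenario is that $\mathbb T\cdot a$ is an isolated boundary orbit of $(\mathring M/\mathbb T)_C^{\bar g_E}$, so that every punctured neighbourhood of it in $\mathring M/\mathbb T$ is homotopy equivalent to a punctured disc; this is perfectly consistent with everything you invoke (the flat punctured disc, completed by adding back the puncture, is such an example with $\bar\mu^+$ a local embedding). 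Without contractibility, Lemma \ref{lemCechEmbedding} cannot be applied: the $\mathbb P\SL_2\mathbb R$-valued \v{C}ech cocycle of gauge transformations can have monodromy around the puncture, so the local ansatz charts need not glue into a single embedding $\phi$. The paper excludes this case by cutting the punctured disc along a line $L$ to obtain a contractible set, covering its completion by finitely many ansatz charts, showing that the boundaries $\partial^{g_f}\mathcal A_\alpha$ of these charts must locally agree along a hypersurface $H$ through the image of $a$, and then deriving a contradiction from the fact that the ray corresponding to $L$ would have to cover the whole line $(T_{\bar\phi(a)}H)/\mathfrak t$ under a linear map, which is impossible. Some argument of this kind is indispensable; as written, your proof silently assumes away the isolated-boundary-orbit case and therefore does not establish the lemma.
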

\begin{proof}
	We first treat the case when $\mathbb T\cdot a$ admits a neighbourhood $U$ in $\mathring M_C^{g_E}/\mathbb T$ such that $U\cap(\mathring M/\mathbb T)$ is contractible. Let $\hat U\subseteq \mathring M$ be the union of the orbits $U\cap(\mathring M/\mathbb T)$. By lemma \ref{lemCechEmbedding}, by shrinking $U$ if necessary, we can find an ambitoric embedding $\phi:\hat U\to\mathcal A$ of $\hat U$ into some ambitoric ansatz space $\mathcal A$. It is easy to see that $\phi^*g_f$ induces the same topology as $g_E$ on $\hat U_C^{g_E}$. Thus we find that $\phi$ naturally extends to an injection 
	$\bar{\phi}:\hat U_C^{g_E}\to\mathcal A_C^{g_f}$. Since $a\in\partial^{g_E}\hat U$, we have $\bar\phi(a)\in\partial^{g_f}\phi(\hat U)$. 
	
	If $\bar\phi(a)\notin\partial^{g_f}\mathcal A$, then $\bar\phi(a)\in\mathcal A$. 
	Since $g|_{\mathring M}$ is one of the special metrics $\{g_0,g_\pm,g_p\}$ of the ambitoric structure on $\mathring M$, $\phi_*\left({g|_{\mathring M}}\right)$ must be homothetic to one of the special metrics of the ambitoric structure on $\phi(\hat U)\subseteq\mathcal A$. Thus $\phi_*\left({g|_{\mathring M}}\right)$ extends to a complete metric on $\mathcal A$, implying that \newline
	$\bar{\phi}(a)\in\partial^{\phi_*\left({g|_{\mathring M}}\right)}\phi(\hat U)$. Moreover, since $\hat U$ is $\mathbb T$-invariant, so is $\phi(\hat U)$, and thus $\partial^{\phi_*\left({g|_{\mathring M}}\right)}\phi(\hat U)$. Thus the free orbit $\mathbb T\cdot\bar\phi(a)$ lies in 
	$\partial^{\phi_*\left({g|_{\mathring M}}\right)}\phi(\hat U)$. Reversing the correspondence shows that there is a free $\mathbb T$-orbit 
	$\mathcal O\subset\partial^{g}\hat U$ corresponding to the orbit 
	$\mathbb T\cdot p\subset\partial^{g_E}\hat U$ via lemma \ref{propCauchyComparisons}. Since $\mathcal O$ is a free orbit, it must lie in $\mathring M$ by definition. But on $\mathring M$, the topologies induced by $g_E$ and $g$ are equivalent. Thus $a\in\mathring M$. This contradicts $a\in\partial^{g_E}\mathring M$, so that $\bar\phi(a)\in\partial^{g_f}\mathcal A$ as claimed.
	
	We now treat the case where every neighbourhood $U$ of $\mathbb T\cdot a$ satisfies the condition that $U\cap(\mathring M/\mathbb T)$ is not contractible. We will show that this case is not possible, essentially because for an ambitoric ansatz space $\mathcal A$, $\partial^{g_f}\mathcal A$ does not have isolated $\mathbb T$-orbits. Since $M_C^{g_E}/\mathbb T$ is $2$-dimensional, we can find a neighbourhood $U$ of $\mathbb T\cdot a$  such that $U\cap(\mathring M/\mathbb T)$ is homeomorphic to a punctured disc. Let $V\subset U\cap(\mathring M/\mathbb T)$ be a contractible set obtained by cutting a line $L$ between the two boundary components of the puctured disc $U\cap(\mathring M/\mathbb T)$. Let $\hat V\subseteq\mathring M$ be the union of $\mathbb T$-orbits $V$. We can then use lemma \ref{lemCechEmbedding} to get a local embedding $\phi:\hat V\to (\mathbb{RP}^1)^2\times\mathbb T$. Arguing as in the previous case, $\phi$ extends to a local embedding 
	$\bar{\phi}:\hat V_C^{g_E}\to(\mathbb{RP}^1)^2\times\mathbb T$. Still using lemma \ref{lemCechEmbedding}, Since $\hat V^{g_E}_C$ is compact, we can cover $\hat V$ with finitely many $\{\hat V_\alpha\}_\alpha$ such that each $\phi|_{\hat V_\alpha}$ is an embedding into some ambitoric ansatz space 
	$\mathcal A_\alpha\subset(\mathbb{RP}^1)^2\times\mathbb T$. Let $S$ be the set of $\alpha$ such that $a\in\partial^{g_E}\hat V_\alpha$.  We can repeat the argument from the previous case for each $\alpha$ to deduce that $\bar{\phi}(a)\in\partial^{g_f}\mathcal A_\alpha$ for each $\alpha\in S$. But the $\{\partial^{g_f}\mathcal A_\alpha\}_\alpha$ are (possibly singular) hypersurfaces which must locally agree, so there must be some neighbourhood $W$ of $\bar\phi(a)$ and some hypersurface $H$ in $W$ passing through $\bar{\phi}(a)$ which such that $\partial^{g_f}\mathcal A_\alpha\cap W = H$. We treat the case where $H$ is not singular at $\bar{\phi}(a)$ for clarity. By shrinking $V$ small enough so that $\bar\phi(V)\subset W$ we find that $\phi(\hat V)$ lies on one side of $H$ in $W$.
	It follows that the ray in $T_{\mathbb T\cdot a}(V_C^{\bar g_E})$ corresponding to $L$ must cover the line $(T_{\bar \phi(a)}H)/\mathfrak t\subset T_{\mathbb T\cdot \bar{\phi}(a)}(\mathbb{RP}^1)^2$ by the map induced by $\bar\phi$. This is impossible, since a linear map cannot send a ray to a line.
\end{proof}

We use the previous lemma to decompose $\partial^{g_E}\mathring M$ into components corresponding to the decomposition of $\partial^{g_f}\mathcal A$ into folds, edges, $P$, and corners. We will use this language to discuss the components of $\partial^{g_E}\mathring M$. As in the previous section, we will use proposition \ref{propCauchyComparisons} to decompose $\partial^{g}\mathring M$ into components, which we will call folds, edges, $P$, and corners likewise. We call components of $\partial^{g_E}\mathring M$ which are not in $\partial^{g}\mathring M$ \emph{infinitely distant}.

The following example is a variation on example 2.8 from \cite{karshon2009non}, and shows that the orbital momentum map is not always a global embedding:

\begin{ex}
	Consider the elliptic type ambitoric ansatz space 
	$$\mathcal A:=\mathcal A\left(xy+1,x^4+1,y^4+1,\mathbb T\right).$$
	Consider the space $\bar{\mathcal A}$ obtained by gluing the edges $\{x=\infty\}$ and $\{x=-\infty\}$ together, as well as the edges $\{y=\infty\}$ and $\{y=-\infty\}$ together. As in example \ref{exInfExtension}, the ambitoric structure from $\mathcal A$ extends to $\bar{\mathcal A}$. Since the functions $A(x)$ and $B(y)$ are positive on $\bar{\mathcal A}$, $\bar{\mathcal A}$ has no edges. It follows that $\bar{\mathcal A}$ has two connected components. Let $\bar{\mathcal A_0}$ be one of the connected components of $\bar{\mathcal A}$.
	
	From section \ref{secMomFold}, we find that the moment map 
	$\mu^+:\bar{\mathcal A_0}\to\mathfrak t^*$ has an image $\Delta_+$ which is the exterior of a conic $\mathcal C_+$. Since $\mathcal A$ is elliptic type, $\mathcal C_{+}$ is an ellipse. Moreover, the $\mu^+$-fibre over any point in $\Delta_+$ is a single $\mathbb T$-orbit in $\bar{\mathcal A_0}$, allowing us to identify 
	$\bar{\mathcal A_0}/\mathbb T\cong\Delta_+$. Since $\Delta_+$ is not simply connected, its universal cover $\pi:\tilde{\Delta}_+\to\Delta_+$ is not injective. Since 
	$\mu^+:\bar{\mathcal A_0}\to\Delta_+$ is a principal $\mathbb T$-bundle, we can use $\pi$ to construct the pull-back bundle $\tilde{\mathcal  A}:=\pi^*\bar{\mathcal A_0}$ over $\tilde\Delta_+$. Since $\pi$ is a covering map, it extends to a covering map from $\tilde {\mathcal A}$ to $\bar{\mathcal A_0}$, which can be used to induce the ambitoric structure from $\bar{\mathcal A_0}$  to $\tilde {\mathcal A}$.
	
	We find that $\tilde{\mathcal A}$ is an ambitoric manifold with orbit space $\tilde{\mathcal A}/\mathbb T\cong\tilde{\Delta}_+$ and orbital moment map $\pi:\tilde\Delta_+\to\Delta_+\subset\mathfrak t^*$ which is not injective.
\end{ex}

The main feature of the previous example was the existence of a proper fold. This motivates us to consider only regular ambitoric orbifold completions without proper folds, since we want to rule out the possibility that the orbital moment map is not injective. If $(M,\mathring M,g,J_+,J_-,\mathbb T)$ is a regular ambitoric orbifold completion without proper folds, then the boundary $\partial^{g_E}\mathring M$ consists of only edges and the corners where they meet. The orbital moment map naturally extends to a map $(\mathring M/\mathbb T)_C^{\bar g_E}\to\mathfrak t^*$, which is locally convex since $\partial^{g_E}\mathring M$ consists of only edges and corners, which each get locally sent to lines and convex cones respectively. As in the case of compact toric orbifolds, we wish to use this local convexity to prove global convexity, which will give us control over the topology of $M$. The tool to do this is the Lokal-global-Prinzip for convexity theorems \cite{hilgert1994coadjoint}, although it requires that the momentum map is proper. Since $(\mathring M/\mathbb T)_C^{\bar g_E}$ may not be compact a priori, we have to consider the case that the momentum map is not proper. However, in the next section we will see that completeness is actually enough for the proof of Lokal-global-Prinzip to hold.

\subsubsection{Lokal-global-Prinzip}

We begin this section by recalling some definitions from \cite{hilgert1994coadjoint}, and then stating their version of the Lokal-global-Prinzip. We will then state and prove the slight modification of this result which we will need. Note that it is likely that what we prove here is only a case of the general treatment of the Lokal-global-Prinzip in \cite{rump2012convexity} (where we got the idea to use Hopf-Rinow), but for our purposes we find it convenient to prove the result.

\begin{defn}
	A continuous map $\Psi:X\to V$ from a connected Hausdorff topological space $X$ to a finite dimensional vector space $V$ is \emph{locally fibre connected} if every point in $X$ admits an arbitrarily small neighbourhood $U$ such that $\Psi^{-1}(\Psi(u))\cap U$ is connected for each $u\in U$.
\end{defn}

\begin{defn}
	A locally fibre connected map $\Psi:X\to V$ has \emph{local convexity data} if every $x\in X$ admits an arbitrarily small neighbourhood $U_x$ and a closed convex cone $C_x\subseteq V$ with vertex $\Psi(x)$ satisfying:
	\begin{itemize}
		\item $\Psi(U_x)$ is a neighbourhood of $\Psi(x)$ in $C_x$,
		\item $\Psi|_{U_x}:U_x\to C_x$ is open,
		\item $\Psi^{-1}(\Psi(u))\cap U_x$ is connected for each $u\in U_x$,
	\end{itemize}
	where the topology on $C_x$ is the subspace topology induced from $V$.
\end{defn}

These definitions allow us to state the theorem:

\begin{theorem}[Lokal-global-Prinzip for convexity theorems \cite{hilgert1994coadjoint}]
	Let $\Psi: X\to V$ be a proper locally fibre connected map with local convexity data. Then $\Psi(X)$ is a closed locally polyhedral convex subset of $V$, the fibres $\Psi^{-1}(v)$ are all connected, and $\Psi:X\to\Psi(X)$ is a open mapping.
\end{theorem}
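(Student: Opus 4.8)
The plan is to bootstrap from the purely local information (the convexity data) to the four global conclusions---closedness, convexity with local polyhedrality, openness, and connectedness of fibres---using properness to organize the behaviour over each single value, and a Tietze--Nakajima type rigidity theorem to upgrade local convexity to global convexity. First I would dispose of the two soft conclusions: since $\Psi$ is proper and $V$ is locally compact Hausdorff, $\Psi$ is a closed map, so $\Psi(X)$ is closed in $V$; and since $X$ is connected and $\Psi$ continuous, $\Psi(X)$ is connected.

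The technical heart is to show that the pointwise local convexity data assemble into a single local convex model over each value. Fix $v\in\Psi(X)$. By properness the fibre $F:=\Psi^{-1}(v)$ is compact, so finitely many data neighbourhoods $U_{x_1},\dots,U_{x_n}$ with $x_i\in F$ cover $F$, and a tube-lemma argument (again using properness to control $\Psi^{-1}(\overline W)$) produces a neighbourhood $W$ of $v$ in $V$ with $\Psi^{-1}(W)\subseteq\bigcup_i U_{x_i}$. The associated closed convex cones $C_{x_i}$ all have vertex $v$, and each $\Psi\bigl(U_{x_i}\cap\Psi^{-1}(W)\bigr)$ is a relative neighbourhood of $v$ in $C_{x_i}$ on which $\Psi$ is open. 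Comparing these relative neighbourhoods over the compact fibre---using that a set which is a relative neighbourhood of $v$ in two cones with common vertex pins those cones down near $v$---one extracts a single coherent cone $C_v$ with $\Psi(X)\cap W$ equal to a relative neighbourhood of $v$ in $C_v$. This simultaneously yields local convexity and local polyhedrality of $\Psi(X)$, and, reading the openness of the $\Psi|_{U_{x_i}}$ back through this identification, that $\Psi\colon X\to\Psi(X)$ is an open mapping. With $\Psi(X)$ now closed, connected, and locally convex, the classical Tietze--Nakajima theorem (a closed connected locally convex subset of a finite-dimensional vector space is convex) gives convexity of $\Psi(X)$ at once.

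It then remains to prove connectedness of the fibres, which I expect to be the main obstacle. I would show that the set $S\subseteq\Psi(X)$ of values with connected fibre is both open and closed; since $\Psi(X)$ is connected and $S$ is nonempty, this forces $S=\Psi(X)$. Openness of $S$ follows from local fibre-connectedness together with the openness of $\Psi$ and the coherent cone model near $v$: nearby fibres can be matched to a connected one through the local structure. Closedness of $S$ is the delicate point, and it is precisely here that properness is indispensable---given $v_k\to v$ with each $F_{v_k}$ connected, one must rule out that $F_v$ fractures in the limit, and this is exactly what the compactness of $\Psi^{-1}(\overline W)$, combined with the local model above, prevents. (This is the step the author subsequently replaces: properness is weakened to completeness and the needed compactness is recovered via Hopf--Rinow.) Assembling closedness, convexity with local polyhedrality, openness, and fibre connectedness completes the proof.
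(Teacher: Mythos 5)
Your route (closedness from properness, a local cone model over each value, Tietze--Nakajima for global convexity, then an open-closed argument for fibre connectedness) is genuinely different from the argument this theorem actually rests on: the paper takes the statement from Hilgert--Neeb--Plank and, for its own modified version, reproduces their proof --- pass to the $\Psi$-quotient $\tilde X$ with the length metric induced by $\tilde\Psi$, use properness (in the paper, completeness plus Hopf--Rinow) to get compact closed balls, run a dyadic midpoint construction to produce curves whose $\tilde\Psi$-images are straight segments (convexity), and a turning-point argument (fibre connectedness). Your proposal is instead in the spirit of the Tietze--Nakajima approach of Bjorndahl--Karshon. That would be a legitimate alternative, but as written it has a genuine gap.

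The gap is the step where you ``extract a single coherent cone $C_v$'' from the finitely many cones $C_{x_1},\dots,C_{x_n}$ covering the fibre $F=\Psi^{-1}(v)$, and it is circular. Before fibre connectedness is known, the sets $U_{x_i}\cap F$ may fall into groups that do not overlap along $F$, and then $\Psi(X)\cap W$ is only a finite union of cone-neighbourhoods of $v$ with common vertex but possibly different cones --- for instance a union of two half-planes --- which is not convex at $v$. The ``pinning'' argument you invoke (that a common relative neighbourhood identifies two cones) only applies when $U_{x_i}\cap U_{x_j}\cap F\neq\emptyset$, i.e.\ after chaining along a \emph{connected} fibre; so the local model presupposes exactly the conclusion you defer to the end. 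This matters twice over: Tietze--Nakajima requires local convexity at \emph{every} point (an L-shaped region is closed, connected, and locally convex except at one point, yet not convex), so your convexity conclusion does not follow; and your fibre-connectedness argument in turn leans on the same coherent cone model (both for openness of $S$ and for ruling out fracturing in the limit), so each half of the proof assumes the other. (A smaller issue: nonemptiness of $S$ is never addressed.) The known ways to break this circle establish convexity and fibre connectedness \emph{simultaneously}: either the quotient-plus-segment argument --- replace $X$ by $X/(\text{fibre components})$, observe that local fibre connectedness makes $\tilde\Psi$ locally injective onto cone neighbourhoods, and lift straight segments (this is what the metric midpoint construction in the paper accomplishes) --- or Bjorndahl--Karshon's careful formulation, in which the local hypotheses are stated for the quotient map from the start.
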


Since we are essentially reproducing this theorem, we need to continue defining the constructions used in the proof.

Consider the equivalence relation on $X$ defined by $x\sim y$ if and only if $x$ and $y$ are both contained in the same connected component of a fibre of $\Psi$. The quotient space $\tilde X:=X/\sim$ is called the \emph{$\Psi$-quotient} of $X$. Let $\tilde{\Psi}:\tilde X\to V$ be the map induced by $\Psi$ on $\tilde X$. 

Let $d_V$ be the usual Euclidean distance on $V$ with respect to some fixed basis. We can use this to define a distance on $\tilde X$ as follows:
$$d:\tilde X\times\tilde X\to [0,\infty):(\tilde x,\tilde y)\mapsto \inf_{\gamma\in\Gamma(\tilde x,\tilde y)}\length_{d_V}(\tilde\Psi\circ\gamma),$$
where $\Gamma(\tilde x,\tilde y)$ is the set of curves $\gamma$ connecting $\tilde x$ to $\tilde y$ such that $\Psi\circ\gamma$ is piecewise differentiable. $d$ is called the \emph{metric induced on $\tilde X$} by $\tilde{\Psi}$.

Now we can prove our version of the Lokal-global Prinzip for convexity theorems. Note that the proof is the same as the relevant parts of the previous theorem aside from the replacement of the use of properness with an appeal to the Hopf-Rinow theorem.

\begin{theorem}
	Let $\Psi: X\to V$ be a locally fibre connected map with local convexity data. Moreover, assume that $\tilde X$ is a complete locally compact length space with respect to the metric induced by $\tilde\Psi$. Then $\phi(X)$ is a convex subset of $V$ and the fibres $\Psi^{-1}(v)$ are all connected.
\end{theorem}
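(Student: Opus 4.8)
The plan is to run the same strategy as the original Lokal-global-Prinzip, replacing every appeal to properness of $\Psi$ by an appeal to the Hopf--Rinow theorem for the length space $(\tilde X,d)$. First I would pass to the $\Psi$-quotient: since $\Psi(X)=\tilde\Psi(\tilde X)$, and the fibres of $\Psi$ are connected precisely when $\tilde\Psi$ is injective, it suffices to show that $\tilde\Psi(\tilde X)$ is convex and that $\tilde\Psi$ is injective. The local convexity data descends to $\tilde X$, and local fibre-connectedness makes $\tilde\Psi$ a local homeomorphism. After shrinking each chart $\tilde U_x$ so that its image is $C_x\cap B$ for a small Euclidean ball $B$ about the vertex $\tilde\Psi(x)$, this image is a relatively open \emph{convex} subset of the cone $C_x$. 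Note that $\tilde X$ is connected, being a quotient of the connected space $X$, and that $d$ genuinely separates points (consistent with the hypothesis that it is a length metric), because $\tilde\Psi$ is locally injective.

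The key structural step is to show that $\tilde\Psi$ is a local isometry from $(\tilde X,d)$ onto these convex pieces, and consequently that $d$-geodesics are sent to straight line segments in $V$. Since each image chart is convex, its intrinsic length metric agrees with the restricted Euclidean metric $d_V$, so on each $\tilde U_x$ the induced metric $d$ is exactly the pullback of $d_V$ and $\tilde\Psi|_{\tilde U_x}$ is an isometry onto its image; in particular $\tilde\Psi$ is $1$-Lipschitz. A $d$-geodesic $\sigma$ is locally length-minimizing, so within each chart $\tilde\Psi\circ\sigma$ is a minimizing curve in a convex subset of $V$, hence a straight segment. Because the resulting local direction is locally constant along the connected curve $\sigma$, it is globally constant, so $\tilde\Psi\circ\sigma$ is a single straight line segment with no corners, even across charts whose vertex lies on the boundary of its cone.

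With this in hand, completeness enters only through Hopf--Rinow: since $(\tilde X,d)$ is a complete, locally compact length space, it is geodesic, so any two of its points are joined by a minimizing $d$-geodesic. To prove convexity, given $v_0,v_1\in\tilde\Psi(\tilde X)$ with preimages $\tilde x_0,\tilde x_1$, I would join $\tilde x_0$ to $\tilde x_1$ by a minimizing geodesic $\sigma$; its image is a straight segment from $v_0$ to $v_1$, necessarily equal to $[v_0,v_1]$, which therefore lies in $\tilde\Psi(\tilde X)=\Psi(X)$. For injectivity, if $\tilde\Psi(\tilde x_0)=\tilde\Psi(\tilde x_1)$, the minimizing geodesic between them maps to a straight segment of $V$ whose two endpoints coincide, hence to a constant; as $\tilde\Psi$ is a local isometry this forces $\length_{d}(\sigma)=0$, so $\tilde x_0=\tilde x_1$. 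This gives connectedness of the fibres of $\Psi$.

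The main obstacle I anticipate is the second step: verifying that $\tilde\Psi$ is a genuine local isometry onto convex sets and that local minimizers map to corner-free straight segments, in particular at charts whose vertex lies on the boundary of its cone. This is exactly the point at which the original proof invoked properness in order to lift straight segments without escape to infinity; here such escape is instead precluded by completeness, since a lift of a finite-length segment is a $d$-Cauchy curve and hence converges, while Hopf--Rinow supplies the global minimizing geodesics in place of the global path-lifting argument of \cite{hilgert1994coadjoint}.
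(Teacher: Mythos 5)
Your proposal is correct and follows essentially the same strategy as the paper: replace the properness hypothesis by an appeal to the Hopf-Rinow theorem for the complete locally compact length space $(\tilde X,d)$, show that minimizing geodesics have straight line segments as $\tilde\Psi$-images, and deduce convexity together with fibre connectedness from degenerate segments. The only real difference is one of packaging: the paper re-derives the existence of minimizing geodesics by hand via a dyadic midpoint construction (using the ball-compactness part of Hopf-Rinow), whereas you invoke the geodesic-space conclusion of Hopf-Rinow directly, and your explicit local-isometry step makes precise a point the paper treats tersely, namely why equality of $d$-distances forces the image curve to be locally straight.
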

\begin{proof}
 Let $\tilde x_0,\tilde x_1\in\tilde X$ with $c:=d(\tilde x_0,\tilde x_1)$. For each $n\in\mathbb N$, we can find some $\gamma_n\in\Gamma(\tilde x_0,\tilde x_1)$ such that $\length_{d_V}(\tilde\Psi\circ\gamma)\leq c+\frac1n$. For each $n\in\mathbb N$, let $\tilde x_{\frac12}^{(n)}$ be the midpoint of $\gamma_n$. Then $\left(\tilde x_{\frac12}^{(n)}\right)_{n=1}^\infty$ is a sequence in the closed ball $B_{c+1}(\tilde x_0)$ of radius $c+1$ about $\tilde x_0$. Since $\tilde X$ is a complete locally compact length space, the Hopf-Rinow theorem \cite{bridson1999metric} tells us that $B_{c+1}(\tilde x_0)$ is compact. Thus  $\left(\tilde x_{\frac12}^{(n)}\right)_{n=1}^\infty$ admits a subsequence which converges to some $\tilde x_{\frac 12}\in B_{c+1}(\tilde x_0)$, which must satisfy
 $$d\left(\tilde x_0,\tilde{x}_{\frac12}\right)
  =d\left(\tilde x_{\frac12},\tilde{x}_1\right)=\frac c2.$$
 
 Repeating the argument for the pairs of points $(\tilde x_0,\tilde x_{\frac12})$ and $(\tilde x_{\frac12},\tilde x_1)$, we construct points $\tilde x_{\frac14}$ and $\tilde x_{\frac34}$ satisfying 
 \begin{equation}\label{eqnLGPLine}
  d\left(\tilde x_{\frac n{2^m}},\tilde x_{\frac {p}{2^{q}}}\right) 
  = c\left|\frac n{2^m}-\frac{p}{2^{q}}\right| 
 \end{equation}
 for all suitable $n,m,p,q$. Inductively we can repeat the argument to construct points $\tilde x_{\frac n{2^m}}$ for $n,m\in\mathbb N$ with $0\leq n\leq 2^m$ satisfying (\ref{eqnLGPLine}). We can then extend the function $\frac{n}{2^m}\mapsto\tilde x_{\frac n{2^m}}$ to a continuous function $\gamma:[0,1]\to\tilde X$ satisfying $d\big(\gamma(t),\gamma(t')\big)=c|t-t'|$ for all $t,t'\in[0,1]$. This means that locally $$d_V\big(\tilde\Psi\circ\gamma(t),\tilde\Psi\circ\gamma(t')\big)=c|t-t'|,$$ which can only happen if $\tilde\Psi\circ\gamma$ is a straight line segment. This line segment connects $\tilde\Psi(\tilde x_0)$ and $\tilde\Psi(\tilde x_1)$. Varying $\tilde x_0$ and $\tilde x_1$, we find that $\Psi(X)=\tilde{\Psi}(\tilde X)$ is convex.
 
 To prove that the fibres are all connected, let $\tilde x_0,\tilde x_1\in\tilde X$ such that $\tilde\Psi(\tilde x_0)=\tilde\Psi(\tilde x_1)=:v$. Let $\gamma:[0,1]\to\tilde X$ be the curve constructed above. Then $\tilde\Psi\circ\gamma$ is a line segment containing $v$. Assume that $\tilde\Psi\circ\gamma$ is not the constant function $v$. Then $\tilde\Psi\circ\gamma$ is a loop through $v$ as well as a straight line segment. Thus there exists a turning point $v_0=\tilde\Psi\circ\gamma(t_0)$ such that $\tilde\Psi\circ\gamma\big([t_0-\epsilon,t_0]\big)=\tilde\Psi\circ\gamma\big([t_0,t_0+\epsilon]\big)$ for all $\epsilon>0$ sufficiently small. Thus $\tilde{\Psi}$ is not locally injective near $\gamma(t_0)$. This contradicts $\Psi$ being locally fibre connected. Thus $\tilde\Psi\circ\gamma$ is the constant function $v$. This implies that $d(\tilde x_0,\tilde x_1)=0,$ so that $\tilde x_0=\tilde x_1$ as required.
\end{proof}

We recall proposition $3.25$ from \cite{bridson1999metric}:

\begin{prop}
	Let $X$ be a length space and $\tilde X$ be a Hausdorff topological space. Let $p:\tilde X\to X$ be a continuous local homeomorphism, and $d$ be the metric induced on $\tilde X$ by p.
	\begin{enumerate}
		\item If one endows $\tilde X$ with the metric $d$, then $p$ becomes a local isometry.
		\item $d$ is a length metric.
		\item $d$ is the unique metric on $\tilde X$ that satisfies properties $(1)$ and $(2)$.
	\end{enumerate}
\end{prop}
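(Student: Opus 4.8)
The plan is to deduce the whole proposition from the single local assertion (1), treating (2) and (3) as formal consequences of it together with the fact that length is an infinitesimal quantity. Throughout write $d_X$ for the length metric on $X$; since $X$ is a length space, the length of a path in the definition of $d$ is its $d_X$-length, so that $d(\tilde x,\tilde y)=\inf_\gamma\length_{d_X}(p\circ\gamma)$, the infimum taken over paths $\gamma$ in $\tilde X$ joining $\tilde x$ to $\tilde y$. Before anything else I would note that $d$ is genuinely a metric: symmetry and the triangle inequality come from reversing and concatenating paths, while non-degeneracy follows because $\length_{d_X}(p\circ\gamma)\geq d_X(p(\tilde x),p(\tilde y))$ always, and when $p(\tilde x)=p(\tilde y)$ with $\tilde x\neq\tilde y$ one uses that the fibres of a local homeomorphism into a Hausdorff space are discrete to find a chart $\tilde U\ni\tilde x$ with $p|_{\tilde U}$ a homeomorphism onto a ball and $\tilde y\notin\tilde U$; any path from $\tilde x$ to $\tilde y$ must leave $\tilde U$, so its projection escapes the ball and has $d_X$-length bounded below.

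For (1), fix $\tilde x$ and choose a chart $\tilde U\ni\tilde x$ with $p|_{\tilde U}$ a homeomorphism onto a ball $B(p(\tilde x),3r)$. I claim $p$ is distance-preserving on the $d$-ball $B_d(\tilde x,r)$. The inequality $d(\tilde y,\tilde z)\geq d_X(p(\tilde y),p(\tilde z))$ holds for all points and needs nothing. Two ingredients secure the reverse. First, any $\tilde y\in B_d(\tilde x,r)$ actually lies in $\tilde U$: a path from $\tilde x$ to $\tilde y$ of small $d_X$-projected length starts at the centre and stays inside $B(p(\tilde x),3r)$, and by uniqueness of lifts for local homeomorphisms into Hausdorff spaces (the agreement locus of two lifts is open, closed and nonempty) that path coincides with its lift through $(p|_{\tilde U})^{-1}$, hence remains in $\tilde U$. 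Second, for $\tilde y,\tilde z\in B_d(\tilde x,r)$ the length-space hypothesis produces paths in $X$ from $p(\tilde y)$ to $p(\tilde z)$ of $d_X$-length arbitrarily close to $d_X(p(\tilde y),p(\tilde z))<2r$; being short and based near the centre, such a path stays in $B(p(\tilde x),3r)$ and lifts to a path in $\tilde U$ of the same length, giving $d(\tilde y,\tilde z)\leq d_X(p(\tilde y),p(\tilde z))$. This confinement of near-geodesics inside the chart, combined with uniqueness of lifts, is the one place where both hypotheses are genuinely used, and it is the main obstacle; the rest is bookkeeping once the radii are chosen compatibly.

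With (1) in hand, (2) is immediate. Because $p$ is a local isometry, the $d$-length of any path $\gamma$ equals the $d_X$-length of $p\circ\gamma$: length is the supremum over partitions of sums of distances between consecutive points, and on a sufficiently fine partition each consecutive pair lies in a chart where $d$ agrees with the pullback of $d_X$, so the two partition sums coincide in the limit. Hence
\[
d(\tilde x,\tilde y)=\inf_\gamma\length_{d_X}(p\circ\gamma)=\inf_\gamma\length_d(\gamma),
\]
which is exactly the statement that $d$ is a length metric.

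Finally, for (3) let $d'$ be any metric for which $p$ is a local isometry and which is itself a length metric. Local isometry forces $d$ and $d'$ to coincide on a neighbourhood of each point, since both restrict there to the pullback of $d_X$ through $p$. Length being infinitesimal, refining partitions so that consecutive points share such a neighbourhood gives $\length_d(\gamma)=\length_{d'}(\gamma)$ for every path $\gamma$. As both metrics are length metrics,
\[
d(\tilde x,\tilde y)=\inf_\gamma\length_d(\gamma)=\inf_\gamma\length_{d'}(\gamma)=d'(\tilde x,\tilde y),
\]
establishing uniqueness.
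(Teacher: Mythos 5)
The first thing to say is that the paper contains no proof of this statement: it is Proposition 3.25 of \cite{bridson1999metric}, recalled verbatim and used as a black box in the proof of corollary \ref{corLGPCompleteCoverish}. So there is no in-paper argument to compare against, and your proposal has to be judged on its own. On its own terms it is essentially correct, and it is the standard argument: the radius bookkeeping that confines both lifted paths and downstairs near-geodesics to a single chart (the ball $B(p(\tilde x),3r)$ against the $d$-ball $B_d(\tilde x,r)$), combined with uniqueness of lifts through a local homeomorphism into a Hausdorff space, is exactly the mechanism that makes (1) work; and (2), (3) do then follow by transferring lengths along sufficiently fine partitions.

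Two points should be tightened. First, in your non-degeneracy paragraph the inference ``any path from $\tilde x$ to $\tilde y$ must leave $\tilde U$, so its projection escapes the ball'' is stated in the wrong logical order: a path leaving $\tilde U$ does not obviously force its projection out of $p(\tilde U)$; rather, if the projection stayed in $p(\tilde U)$, then by lift uniqueness the path would coincide with its $(p|_{\tilde U})^{-1}$-lift and could never reach $\tilde y\notin\tilde U$. You introduce exactly this argument in part (1), so the gap is presentational, but the order matters. Second, in (2) and (3) you silently pass between classes of paths and topologies: the paths defining $d$ are continuous for the given topology of $\tilde X$, whereas ``$d$ is a length metric'' and your Lebesgue-number/fine-partition arguments concern $d$-continuous (respectively $d'$-continuous) paths. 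For $d$ this is harmless because your confinement argument in (1) also shows that the topology induced by $d$ coincides with the given topology of $\tilde X$ --- but this should be stated, since it is what legitimizes the partition argument in (2). For (3), the uniqueness claim has to be read as uniqueness among metrics that induce the given topology of $\tilde X$ (this is the reading under which the paper applies the proposition, where $\tilde d$ is the pullback of the Euclidean metric by a local homeomorphism); with that reading, your step ``both restrict to the pullback of $d_X$ on a common neighbourhood'' is sound and the proof closes. If $d'$ were allowed to be an arbitrary metric on the underlying set, ``neighbourhood'' would be ambiguous there, and relating $d'$-continuous paths to the paths defining $d$ would require a genuine additional argument.
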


Combining this proposition with the previous theorem gives:

\begin{cor}\label{corLGPCompleteCoverish}
	Let $\Psi: X \to V$ be a locally fibre connected map with local convexity data. Moreover, assume that $\tilde\Psi$ is a local homeomorphism, and that $\tilde X$ is complete with respect to the metric $\tilde d$ given by the pull-back with respect to $\tilde \Psi$ of the Euclidean metric on $V$. Then $\phi(X)$ is a convex subset of $V$ and the fibres $\Psi^{-1}(v)$ are all connected.
\end{cor}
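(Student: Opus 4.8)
The plan is to reduce the statement to the previous theorem, whose conclusion is identical; the only thing to check is that its hypotheses are met. That theorem requires $\tilde X$ to be a \emph{complete locally compact length space} for the metric induced by $\tilde\Psi$, while here we are handed only the weaker-looking data that $\tilde\Psi$ is a local homeomorphism and that $\tilde X$ is complete for the pulled-back metric $\tilde d$. So all the work lies in promoting these hypotheses to those of the theorem.

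First I would observe that the several notions of induced metric in play coincide: the metric $\tilde d$ of the corollary, obtained by pulling back the Euclidean metric $d_V$ along $\tilde\Psi$, is by construction the metric induced on $\tilde X$ by $\tilde\Psi$ as defined before the theorem, namely $d(\tilde x,\tilde y)=\inf_{\gamma}\length_{d_V}(\tilde\Psi\circ\gamma)$, and it is also the metric featuring in Proposition $3.25$ applied with its target length space taken to be $V$ and its local homeomorphism taken to be $\tilde\Psi:\tilde X\to V$. Since $(V,d_V)$ is a length space and $\tilde\Psi$ is a continuous local homeomorphism, that proposition tells us that $\tilde d$ is a length metric and that $\tilde\Psi$ is a local isometry; in particular $\tilde X$ is a length space, and since $\tilde d$ is a genuine metric, $\tilde X$ is metrizable, hence Hausdorff.

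It then remains only to check local compactness, after which the theorem applies verbatim. Because $\tilde\Psi$ is a local isometry onto open subsets of $V\cong\mathbb R^n$, every point of $\tilde X$ has a neighbourhood isometric to an open subset of Euclidean space; such sets are locally compact, so $\tilde X$ is locally compact. Together with the assumed completeness and the length-space property just established, this exhibits $\tilde X$ as a complete locally compact length space, and the previous theorem yields that $\Psi(X)=\tilde\Psi(\tilde X)$ is convex with connected $\Psi$-fibres. The only point requiring care is the bookkeeping in the first step: one must confirm that the metric named $\tilde d$ in the corollary, the induced metric $d$ of the definition, and the induced metric of Proposition $3.25$ are literally the same object, so that the theorem's hypotheses are satisfied on the nose; every remaining step is routine.
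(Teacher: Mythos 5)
Your proposal is correct and takes essentially the same route as the paper: reduce to the previous theorem by showing $(\tilde X,d)$ is a complete locally compact length space, obtaining the length-space property from Proposition $3.25$, local compactness from $\tilde\Psi$ being a local homeomorphism into the locally compact space $V$, and completeness from the hypothesis on $\tilde d$. The one place you wave your hands --- asserting that $\tilde d$ and the induced metric $d$ are ``literally the same object by construction'' --- is exactly where the paper instead invokes the uniqueness clause $(3)$ of Proposition $3.25$: both $d$ and $\tilde d$ satisfy properties $(1)$ and $(2)$, hence $d=\tilde d$, which is the clean way to settle the bookkeeping you flag.
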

\begin{proof}
	By the previous theorem, it suffices to show that $(\tilde X,d)$ is a complete locally compact length space. Since $\tilde \Psi:\tilde X\to V$ is a local homeomorphism and $V$ is locally compact (being a vector space), $\tilde X$ is locally compact. By the previous proposition, $(\tilde X,d)$ is a length space. Note that $\tilde d$ and $d$ both satisfy properties $(1)$ and $(2)$ in the previous proposition. Thus by uniqueness, we have that $d=\tilde d$. Since $(\tilde X,\tilde d)$ is complete, $(\tilde X,d)$ is complete.
\end{proof}

\subsubsection{Classification results}

We can now combine the results of the previous two sections to obtain the main classification results. It is clear from the definitions that the Cauchy completion of a completable ambitoric ansatz space is a regular ambitoric orbifold completion. The following theorem gives a converse in the case that there are no proper folds. Note that this can be combined with our classification of completable ambitoric ansatz spaces to give a classification of regular ambitoric orbifold completion without folds.

The following lemma will be the key which allows us to use our Lokal-global-Prinzip:

\begin{lemma}
 Let $\mathcal A_0$ be a connected component of some ambitoric ansatz space without proper folds. Then the orbital moment map $\bar{\mu}^+:\mathcal A_0/\mathbb T\to\mathfrak t^*$ extends to $(\mathcal A_0/\mathbb T)_C^{\bar g_E}$ as a local homeomorphism with local convexity data.
\end{lemma}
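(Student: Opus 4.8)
The plan is to prove the two assertions separately: that $\bar\mu^+$ extends continuously to the completion, and that the extension is a local homeomorphism carrying local convexity data, the latter being checked chart-by-chart over the finite-distance boundary using the normal forms of Section \ref{secNorm}.

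First I would handle the extension. On the interior $\mathcal A_0/\mathbb T$ the orbital moment map is a local embedding and, by definition, $\bar g_E=(\bar\mu^+)^*(\lambda_1^2+\lambda_2^2)$ is the pullback of the Euclidean metric, so $\bar\mu^+$ is a local isometry; in particular $\length_{\bar g_E}(\gamma)=\length_{\mathrm{Eucl}}(\bar\mu^+\circ\gamma)$ for every curve $\gamma$, whence $d_{\bar g_E}\geq d_{\mathrm{Eucl}}\circ(\bar\mu^+\times\bar\mu^+)$ and $\bar\mu^+$ is $1$-Lipschitz. A $1$-Lipschitz map into the complete space $\mathfrak t^*$ extends uniquely and continuously to the Cauchy completion, giving $\bar\mu^+:(\mathcal A_0/\mathbb T)_C^{\bar g_E}\to\mathfrak t^*$. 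The same identity of lengths shows that the length metric obtained by pulling back the Euclidean metric agrees with $\bar g_E$, which is the hypothesis Corollary \ref{corLGPCompleteCoverish} will later invoke. Since Cauchy sequences have $\bar\mu^+$-image Cauchy in $\mathfrak t^*$, only boundary pieces whose $\mu^+$-image stays finite appear in the completion, so the $\{x=\infty\}$-type pieces drop out automatically.

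Next I would localize using the boundary decomposition of Section \ref{secCompletions}: because $\mathcal A_0$ has no proper folds, every finite-distance point of $\partial^{g_E}(\mathcal A_0/\mathbb T)$ is an \emph{edge} point or a \emph{corner} point, with edges sent by $\mu^+$ to lines and corners to points. At an edge that is not a positive fold, $\mu^+$ is a local diffeomorphism, the edge maps to a line $\ell$, and a short computation in the normal forms (in the parabolic case the sign of $(x-x_0)(y-x_0)$) shows the interior lies strictly on one side of $\ell$ on a connected component; hence a neighborhood maps homeomorphically onto a neighborhood of $\bar\mu^+(a)$ in the closed half-plane $C_a$ bounded by $\ell$, the restriction being open onto $C_a$. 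At a corner where two edges with distinct image lines meet, $\mu^+$ is again a local diffeomorphism and a neighborhood maps homeomorphically onto a neighborhood of the vertex in the wedge (a genuine convex cone) cut out by those lines. In every such chart the fibres $\bar\mu^+{}^{-1}(v)\cap U_a$ are single points, hence connected, so the three conditions defining local convexity data hold with these $C_a$.

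The delicate cases are the non-proper folds. A fold-edge occurs only in the parabolic type and is necessarily negative (Lemma \ref{lemEdgeFoldComp}), so $\omega_+$ remains nondegenerate there and the edge is treated exactly as above, giving a half-plane. The genuinely new phenomenon is a \emph{positive} fold-corner, where $x=y$ and $\mu^+$ fails to be an immersion: writing the parabolic normal form $\mu^+=(x+y,xy)$ near $x=y=c$ and setting $x=c+s$, $y=c-t$ with $s,t>0$, one checks that $(s,t)\mapsto(2c+(s-t),\,c^2+c(s-t)-st)$ is injective with image a neighborhood of $(2c,c^2)$ in the closed half-plane below the common tangent line to the image conic, the two edges mapping onto the two opposite rays of that line. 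Thus, despite the vanishing Jacobian, $\bar\mu^+$ is still a local homeomorphism onto a half-plane $C_a$ and the local convexity data persists. I expect this injectivity-onto-a-half-plane at a non-immersion point to be the main obstacle; everywhere else the statement reduces to the standard local picture of a toric moment map as in Proposition \ref{propAmbiCompBox}, and the hypothesis of no proper folds is precisely what excludes the genuinely non-injective $2$-to-$1$ folding of Figure \ref{figMomFold} in the region adjacent to the interior.
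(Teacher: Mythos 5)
Your proof follows the same architecture as the paper's: extend $\bar\mu^+$ to the completion, then verify the local homeomorphism and local convexity data case by case over edges, corners, and fold points. Two of your ingredients are actually sharper than the paper's. The extension step via the observation that $\bar g_E=(\bar\mu^+)^*(\lambda_1^2+\lambda_2^2)$ makes $\bar\mu^+$ distance non-increasing, hence uniformly continuous, is cleaner than the paper's route (which extends $\bar\mu^+$ as a rational map to $(\mathbb{RP}^1)^2$ and identifies the Cauchy completion inside it). And your explicit Vieta-type injectivity computation at a positive-fold corner, showing $(s,t)\mapsto(2c+(s-t),c^2+c(s-t)-st)$ is injective with image a neighbourhood of the vertex in the half-plane under the tangent line, substantiates what the paper merely asserts about neighbourhoods of corners mapping onto cones bounded by tangent rays to $\mathcal C_+$.

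However, there is a genuine error in your bookkeeping of which boundary pieces lie in $(\mathcal A_0/\mathbb T)_C^{\bar g_E}$, and it produces two incorrect case analyses. First, the claim that ``the $\{x=\infty\}$-type pieces drop out automatically'' is false outside the parabolic normal form: in the hyperbolic normal form of Section \ref{secNorm}, $\mu^+=\bigl(-\tfrac1{x+y},\tfrac{xy}{x+y}\bigr)\to(0,y)$ as $x\to\infty$, and similarly in the elliptic case, so an edge at coordinate infinity which is not a fold has finite moment image, lies at finite $\bar g_E$-distance, and is a genuine completion point; it requires the same half-plane analysis as your finite edges (after a gauge transformation $z\mapsto-\tfrac1z$, as in Example \ref{exInfExtension}), and as written your proof never treats it. Second, your treatment of negative fold-edges is backwards: on $Z_-=\{q(x,y)=0\}$ the form $\omega_+$ does not ``remain nondegenerate'' --- it is only defined on $\mathcal F_+=\mathcal F\setminus Z_-$ and blows up along $Z_-$ (it is folded along $Z_+$, not $Z_-$), and correspondingly $\mu^+_p=-p(x,y)/q(x,y)\to\infty$ as $q(x,y)\to 0$. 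So negative folds (proper, edge, or corner) are precisely the boundary pieces at infinite $\bar g_E$-distance: they never appear in the completion, and no half-plane picture exists for them. The two mistakes are complementary --- the pieces you discard are sometimes the ones that must be kept, and the fold-edges you keep are exactly the ones that drop out --- so the lemma survives once the cases are swapped, but as written the proof omits a non-vacuous case and gives an incorrect analysis of a vacuous one. (If you intend the lemma to feed the proof of Theorem \ref{thmCompletionClassification} in the case $g=g_p$, you should also note, as the paper does, that components of $P$ behave like edges, being sent to lines by Lemma \ref{lemPMomLine}.)
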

\begin{proof}	
	$\bar{\mu}^+$ is a rational map from $\mathcal A_0/\mathbb T\subset(\mathbb{RP}^1)^2$ to $\mathfrak t^*$. Thus $\bar{\mu}^+$ extends naturally to a map on $(\mathbb{RP}^1)^2$, and hence on $(\mathcal A_0/\mathbb T)_C^{\bar g_E}\subset(\mathbb{RP}^1)^2$, which we will also denote by $\bar{\mu}^+$. Since $\mathcal A_0/\mathbb T$ is connected and $\bar{\mu}^+|_{\mathcal A_0/\mathbb T}$ is a homeomorphism, so is $\bar{\mu}^+|_{(\mathcal A_0/\mathbb T)_C^{\bar g_E}}.$
	
	Since there are no proper folds, $\partial^{\bar g_E}(\mathcal A_0/\mathbb T)$ consists of finitely many edges and components of $P$, which $\bar{\mu}^{+}$ sends to lines tangent to the conic $\mathcal C_+$. We find that for every corner $c\in\partial^{\bar g_E}(\mathcal A_0/\mathbb T)$, arbitrarily small neighbourhoods of $c$ get mapped by $\bar{\mu}^+$ to neighbourhoods of $\bar{\mu}^+(c)$ in the convex cone with vertex $\bar{\mu}^+(c)$ formed by the rays passing through $\mathcal C_+$. By similarly describing neighbourhoods of points on edges, on $P$, and in $\mathcal A_0$, we find that $\bar{\mu}^+:(\mathcal A_0/\mathbb T)^{\bar g_E}_C\to \mathfrak t^*$ has local convexity data.
\end{proof}

We can now prove our classification of regular ambitoric orbifold completions:

\begin{proof}[Proof of theorem \ref{thmCompletionClassification}]
  Since $\mathring M$ is $g$-completable, it has no proper folds by theorem \ref{thmClassCompletable}.
  By lemma \ref{lemLocAmbiBdry}, $(\mathring M/\mathbb T)^{\bar g_E}_C$ is locally covered by charts with image in $(\mathcal A/\mathbb T)_C^{g_f}$, which likewise cannot have proper folds.  Applying the previous lemma to these local charts, we find that $\bar{\mu}^+$ naturally extends to a local homeomorphism $\bar{\mu}^+:(\mathring M/\mathbb T)_C^{\bar g_E}\to\mathfrak t^*$ with local convexity data. 
 
 Since $\bar g_E$ is the pull-back of a Euclidean metric on $\mathfrak t^*$ by $\bar{\mu}^+$, we can apply corollary \ref{corLGPCompleteCoverish} with $X=\tilde X = (\mathring M/\mathbb T)_C^{\bar g_E}$, $V=\mathfrak t^*$ and $\Psi=\tilde \Psi = \bar{\mu}^+$. This tells us that $\bar{\mu}^+$ is injective with convex image. In particular $\mathring M/\mathbb T$ is contractible. We can then apply lemma \ref{lemCechEmbedding} to construct a $\mathbb T$-equivariant local embedding $\phi:\mathring M\to (\mathbb {RP}^1)^2\times \mathbb T$. Let $\bar{\phi}:\mathring M/\mathbb T\to(\mathbb{RP}^1)^2$ be the map induced by $\phi$. We have the following commutative diagram:

 	\begin{center}
 		
 		\begin{tikzpicture}[scale=1.5]
 		\node (A) at (0,1) {$\left(\mathbb {RP}^1\right)^2\times\mathbb T$};
 		\node (B) at (2,1) {$\left(\mathbb {RP}^1\right)^2$};
 		\node (C) at (0,0) {$\mathring M$};
 		\node (D) at (2,0) {$\mathring M/\mathbb T$};
 		\node (E) at (3,1) {$\mathfrak t^*$};
 		
 		\path[->,font=\scriptsize,>=angle 90]		
 		 (C) edge node[left]{$\phi$} (A)
 		 (D) edge node[left]{$\bar{\phi}$} (B)
 		 (A) edge (B)
 		 (C) edge (D)
 		 (B) edge node[above]{$\bar{\mu}^+$} (E)
 		 (D) edge node[below]{$\bar{\mu}^+$} (E)
 		;
 		\end{tikzpicture}
 	\end{center}
 Since $\bar{\mu}^+:\mathring M/\mathbb T\to\mathfrak t^*$ is injective, $\bar\phi$ must be injective. Thus $\phi$ is injective. It also follows that since $\mu^+(\mathring M)$ is convex, $\phi(\mathring M)$ intersects each level set of $x$ or $y$ in at most one connected component. We are now able to apply the full force of lemma \ref{lemCechEmbedding} to deduce that the image of $\phi$ can be given the structure of an ambitoric ansatz space. Since $\mathring M$ has no proper folds, its boundary components are all either edges or parts of $P$. Since the moment map sends each of these boundary components to edges, the moment map of $M$ must be a polygon.
\end{proof}

We are now in a position to prove our classification of regular ambitoric $4$-orbfolds:
\begin{proof}[Proof of corollary \ref{corAmbiClass}]
 A regular ambitoric $4$-orbifold $M$ which is complete with respect to $g_0$ is a regular ambitoric orbifold completion with respect to $g$. We can then apply theorem \ref{thmCompletionClassification} to embed the set of free orbits $\mathring M$ of $M$ in the Cauchy completion (with respect $g$) of an ambitoric ansatz space $\mathcal A(q,A,B,\mathbb T)$ which has polygonal moment map images. Moreover, since $\mathcal A$ is completable, it has no proper folds by theorem \ref{thmClassCompletable}. Since $g\in\{g_0,g_\pm\}$, there is no $P$ boundary component. Combining these facts, we find that $\mathcal A$ is of box-type. In particular there exist $x_\pm,y_\pm\in\mathbb R$ such that $\mathring M\cong (x_-,x_+)\times (y_-,y_+)\times\mathbb T\subset\mathcal A(q,A,B,\mathbb T)$, and that $(x-y)q(x,y)$ does not vanish on $(x_-,x_+)\times (y_-,y_+)$.
 
 By theorem \ref{thmClassCompletable}, each edge is either infinitely distant or has compatible normal, and each fold is infinitely distant. The condition on the edges gives the constraints on $A(x)$ and $B(y)$, using lemma \ref{lemEdgeDist}. By an argument similar to lemma \ref{lemEdgeDist}, each corner is infinitely distant with respect to $g_0$ if and only if either of the adjacent edges is. This implies that if two edges which are not infinitely distant meet at a corner, then that corner cannot be a fold. This gives the last condition.
 
 Conversely, given the data described in corollary, on can construct a torus $\mathbb T=\mathbb R^2/\Lambda$, and an ambitoric ansatz space $\mathcal A(q,A,B,\mathbb T)$. We find that $\mathcal A_0:(x_-,x_+)\times(y_-,y_+)\times\mathbb T\subset \mathcal A(q,A,B,\mathbb T)$ is a connected component of an ambitoric ansatz space with no proper folds and each of its four edges being either infinitely distant or having compatible normals. Then theorem \ref{thmClassCompletable} tells us that $(\mathcal A_0)_C^{g_0}$ is a regular ambitoric $4$-orbifold which is complete with respect to $g_0$.
\end{proof}

\section{Application: Compact $4$-manifolds admitting Killing $2$-forms}\label{secKilling}

In this section, we apply our results to the setting of $4$-manifolds admitting Killing $2$-forms. This will build on the work in \cite{gauduchon2015killing}, which we will summarize now.

\begin{defn}
  A differential form $\psi$ on a Riemannian manifold $(M,g)$ is \emph{conformally Killing} if its covariant derivative $\nabla\psi$ is of the form 
 $$\nabla_X\psi=\alpha\wedge X^\flat + X\lrcorner\beta,\qquad\forall X\in \Gamma(TM),$$
 for some differential forms $\alpha$ and $\beta$. Moreover, $\psi$ is \emph{Killing} if $\alpha=0$ and \emph{*-Killing} if $\beta=0$.
\end{defn}

Note that if $M$ is oriented and $*$ is the Hodge star operator, then $\psi$ is Killing if and only if $*\psi$ is $*$-Killing.

\begin{prop}[2.1 in \cite{gauduchon2015killing}]\label{propGMAmbi}
Let $(M,g)$ be a connected oriented Riemannian $4$-manifold admitting a non-parallel $*$-Killing $2$-form $\psi$. Then on an open dense subset $M_0$ of $M$, the pair $(g,\psi)$ gives rise to an ambiK\"ahler structure $(g_+,J+,\omega_+),(g_-,J_-,\omega_-)$, where $g_\pm=f_\pm^{-2}g$. Here $f_\pm:=\frac{|\psi_\pm|}{\sqrt 2}$, where $\psi_\pm$ are the self-dual and anti-self dual parts of $\psi$.
\end{prop}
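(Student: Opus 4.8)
The plan is to work on the open set $M_0$ where neither the self-dual nor the anti-self-dual part of $\psi$ vanishes, to build the two almost complex structures algebraically, and then to extract the Kähler condition from the $*$-Killing equation by splitting it into self-dual and anti-self-dual parts.

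\emph{Reducing the equation.} Since $\psi$ is $*$-Killing, the component $\alpha = -\tfrac13 d^*\psi$ vanishes, so $\psi$ is co-closed and the defining identity becomes $\nabla_X\psi = \tfrac13\,X\lrcorner d\psi$ for all $X$. In dimension four write $d\psi = *\gamma$ for a $1$-form $\gamma$; then $X\lrcorner d\psi$ equals, up to sign, $*(\gamma\wedge X^\flat)$. The Levi--Civita connection preserves the splitting $\Lambda^2 = \Lambda^+\oplus\Lambda^-$, so $\nabla_X\psi_\pm$ is again (anti-)self-dual, while the Hodge star acts as $\pm1$ on $\Lambda^\pm$. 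Projecting the identity onto its self-dual and anti-self-dual parts therefore decouples it into
$$\nabla_X\psi_+ = (\gamma\wedge X^\flat)_+, \qquad \nabla_X\psi_- = -(\gamma\wedge X^\flat)_-,$$
so that each of $\psi_\pm$ is separately a (anti-)self-dual conformal Killing $2$-form: the only surviving component of $\nabla\psi_\pm$ is the linear-in-$\gamma$ term $(\gamma\wedge X^\flat)_\pm$, and the trace-free (``Cartan product'') component of $\nabla\psi_\pm$ vanishes.

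\emph{The almost complex structures.} On $M_0$ set $f_\pm = |\psi_\pm|/\sqrt2$ and let $J_\pm = f_\pm^{-1}\psi_\pm^\sharp$ be the endomorphism obtained by raising an index. A nonzero (anti-)self-dual $2$-form in dimension four, normalized by its length, squares to $-\Id$; hence $J_\pm^2 = -\Id$ and $J_\pm$ is $g$-orthogonal, i.e. compatible with $g$, with $J_+$ and $J_-$ inducing opposite orientations (this is exactly the self- versus anti-self-duality of $\psi_\pm$) and commuting. The fundamental form of $(g,J_\pm)$ is then $\Omega_\pm := g(J_\pm\cdot,\cdot) = f_\pm^{-1}\psi_\pm$, of constant $g$-norm $\sqrt2$, and the candidate Kähler form of $g_\pm = f_\pm^{-2}g$ is $\omega_\pm = g_\pm(J_\pm\cdot,\cdot) = f_\pm^{-3}\psi_\pm$.

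\emph{Integrability and closedness.} It remains to show that $J_\pm$ is integrable and that $\omega_\pm$ is closed; a Hermitian (integrable) almost complex structure whose fundamental form is closed is Kähler, so this finishes the proof. Integrability follows from the decoupled equation: the obstruction to integrability of $J_\pm$ is a prescribed irreducible component of $\nabla\Omega_\pm$ (in the Gray--Hervella sense), and it lies precisely in the trace-free part of $\Lambda^1\otimes\Lambda^\pm$ that the right-hand side $(\gamma\wedge X^\flat)_\pm$ does not occupy; equivalently, a nowhere-zero self-dual conformal Killing $2$-form determines a $g$-compatible integrable complex structure. For closedness, differentiating gives $d\omega_\pm = -3f_\pm^{-4}\,df_\pm\wedge\psi_\pm + f_\pm^{-3}\,d\psi_\pm$, so $d\omega_\pm = 0$ is equivalent to $d\psi_\pm = 3\,d\log f_\pm\wedge\psi_\pm$. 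One verifies this from the decoupled equation together with the relation between $\gamma$ and $f_\pm$: pairing $\nabla_X\psi_\pm = \pm(\gamma\wedge X^\flat)_\pm$ with $\psi_\pm$ shows $d(f_\pm^2)$ is proportional to $\gamma^\sharp\lrcorner\psi_\pm$, which identifies $\gamma$ with a multiple of $J_\pm\,d\log f_\pm$ and turns the exterior derivative of the decoupled equation into exactly the required identity (equivalently, the Lee form of $(g,J_\pm)$ equals $d\log f_\pm^2$, which is annihilated by the conformal factor $f_\pm^{-2}$). Finally, $M_0$ is dense because the $*$-Killing system is of finite type: a nonzero solution cannot vanish on an open set without being parallel there, hence, by unique continuation, parallel on all of $M$, contradicting the non-parallel hypothesis.

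\emph{Main obstacle.} The crux is the third step --- converting the pointwise decoupled identity $\nabla_X\psi_\pm = \pm(\gamma\wedge X^\flat)_\pm$ into the two precise statements that $J_\pm$ is integrable and that the Lee form is $d\log f_\pm^2$. This is a representation-theoretic bookkeeping of $\nabla\Omega_\pm$ in the decomposition of $\Lambda^1\otimes\Lambda^\pm$, and the careful tracking of signs and of the scalar powers of $f_\pm$ relating $\psi_\pm$, $\Omega_\pm$ and $\omega_\pm$ is where essentially all of the real work lies.
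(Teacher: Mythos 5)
A preliminary remark: the paper does not prove this proposition at all --- it is imported verbatim as Proposition 2.1 of \cite{gauduchon2015killing} --- so your proposal can only be compared with the Gauduchon--Moroianu argument. In outline you follow the same route they do: split $\psi$ into self-dual and anti-self-dual parts, show each part separately satisfies an (anti-)self-dual conformal Killing equation, invoke the classical correspondence between nowhere-vanishing (anti-)self-dual conformal Killing $2$-forms and conformally K\"ahler Hermitian structures, fix the conformal factor by a Lee-form computation, and get density of $M_0$ from the finite-type nature of the system. Your normalizations ($|\Omega_\pm|_g=\sqrt2$, $\omega_\pm=f_\pm^{-3}\psi_\pm$, Lee form of $(g,J_\pm)$ equal to $d\log f_\pm^2$) are all consistent.

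Two concrete problems remain. First, you have the conventions backwards: by the paper's definition, $*$-Killing means $\beta=0$, i.e.\ $\nabla_X\psi=\alpha\wedge X^\flat$, which makes $\psi$ \emph{closed} with $\alpha=\tfrac13 d^*\psi$; the co-closed equation $\nabla_X\psi=\tfrac13\,X\lrcorner d\psi$ that you start from is the \emph{Killing} equation. This is harmless in substance, since replacing $\psi$ by $*\psi$ interchanges the two notions and only changes $\psi_\pm$ by signs, leaving $f_\pm$, $M_0$ and the resulting structures untouched, but it should be corrected. Second, and this is a genuine gap, your density argument proves the wrong statement: it shows that the zero set of $\psi$ has empty interior, whereas $M_0$ is the locus where \emph{both} $\psi_+$ and $\psi_-$ are nonzero, and $\psi$ is certainly nonzero at points where only $\psi_+$ vanishes. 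The correct argument must be run on each part separately: if $\psi_+$ vanished on an open set then, being a self-dual conformal Killing form (a finite-type system), it would vanish identically on the connected $M$; then $\psi=\psi_-$ would be anti-self-dual and $*$-Killing everywhere, and since $(\alpha\wedge X^\flat)_+=0$ for all $X$ forces $\alpha=0$ (one has $|(\alpha\wedge X^\flat)_+|^2=\tfrac12|\alpha\wedge X^\flat|^2$), the equation $\nabla_X\psi=\alpha\wedge X^\flat$ with anti-self-dual left-hand side would give $\alpha\equiv0$, i.e.\ $\psi$ parallel, contradicting the hypothesis. Your sketch has the finite-type ingredient but omits this last lemma, without which non-parallelism is never contradicted. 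Finally, you assert rather than prove the integrability step; since that correspondence is classical and you flag the deferral explicitly, this is acceptable, but be aware that it is precisely where the analytic content of the proposition sits.
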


Define vector fields $K_1$ and $K_2$ on such an $M$ by
\begin{align*}
 K_1 &:=-\frac12\alpha^\sharp, \\
 K_2 &:=\frac12(*\psi)^\sharp(K_1).
\end{align*}
These can be used to form a rough classification:
\begin{prop}[3.3 in \cite{gauduchon2015killing}]
 Any connected oriented $4$-dimensional Riemannian manifold $(M,g)$ admitting a non-parallel $*$-Killing $2$-from $\psi$ fits into one of the following three exclusive possible cases:
 \begin{enumerate}
 	\item The vector fields $K_1$ and $K_2$ are Killing and independent on a dense open set $\mathcal U$ of $M$.
 	\item The vector fields $K_1$ and $K_2$ are Killing and $K_2=cK_1$ for some $c\in\mathbb R\backslash 0$. 
 	\item $f_+=f_-$ on all of $M$, $K_2=0$, and $K_1$ is not a Killing vector field in general.
 \end{enumerate}
\end{prop}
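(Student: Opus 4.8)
The plan is to extract the entire trichotomy from the single first-order equation $\nabla_X\psi=\alpha\wedge X^\flat$ that defines a non-parallel $*$-Killing form. First I would record its algebraic consequences: tracing the equation gives $d\psi=0$ and $\delta\psi=3\alpha$, so $\alpha=\tfrac13\delta\psi$ is determined by $\psi$ and $K_1=-\tfrac12\alpha^\sharp$ is intrinsic. Because $K_1$ is a fixed multiple of the metric dual of $\alpha$, one has $(\mathcal L_{K_1}g)(X,Y)=-\big((\nabla_X\alpha)(Y)+(\nabla_Y\alpha)(X)\big)$, so that \emph{$K_1$ is Killing if and only if $\operatorname{Sym}\nabla\alpha=0$}. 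The whole statement will be read off from this symmetric $2$-tensor together with the formula below for $K_2$.

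Next I would pass to the self-dual/anti-self-dual splitting, which is where dimension four enters. Since $*$ is parallel, $\nabla$ preserves $\psi=\psi_++\psi_-$; writing $\psi_\pm=f_\pm F_\pm$ with $F_\pm=g(J_\pm\cdot,\cdot)$ the fundamental forms of the almost-Hermitian pair of Proposition \ref{propGMAmbi} (so $f_\pm=|\psi_\pm|/\sqrt2$ and $(F_\pm)^\sharp=J_\pm$), the defining equation splits into an SD and an ASD half. Feeding in that $g_\pm=f_\pm^{-2}g$ is Kähler, so the Lee form of $(g,J_\pm)$ is a multiple of $d\log f_\pm$ and $\nabla F_\pm$ is pinned down by $df_\pm$, these two halves should collapse (up to an overall sign and constant) to
\[ K_1=J_+\nabla f_++J_-\nabla f_-,\qquad (*\psi)^\sharp=f_+J_+-f_-J_-, \]
whence $K_2=\tfrac12(f_+J_+-f_-J_-)K_1$. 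Expanding and decomposing along the $\pm1$-eigenbundles of the involution $\tau=-J_+J_-$, using the commuting relations $J_+J_-=J_-J_+=-\tau$, gives
\[ 2K_2=(f_--f_+)(\nabla f_++\nabla f_-)\big|_{\tau=+1}+(f_-+f_+)(\nabla f_--\nabla f_+)\big|_{\tau=-1}. \]
As $f_++f_->0$ wherever $\psi_\pm\neq0$, this shows $K_2\equiv0$ exactly when $f_+=f_-$, which is the defining feature of the third case.

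The case division is now governed by the function $f_+-f_-$. If the zero set of $f_+-f_-$ has interior, then the overdetermined (twistor-type) nature of the $*$-Killing equation makes $\psi$ real-analytic, and unique continuation forces $f_+\equiv f_-$ on the connected $M$; the eigenbundle formula gives $K_2\equiv0$, while $K_1=(J_++J_-)\nabla f$ is the generally non-Killing field of case $(3)$. Otherwise $f_+\neq f_-$ on a dense open set, where I would prove both $K_1$ and $K_2$ are Killing: for $K_1$ this is the assertion $\operatorname{Sym}\nabla\alpha=0$, and the Killing property of $K_2$ follows because a Killing $K_1$ preserves $g$, $\psi$, hence the endomorphism $(*\psi)^\sharp$, after which one checks $\nabla K_2$ is skew. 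Two Killing fields that are pointwise dependent on a connected manifold are proportional by a constant, so the surviving alternatives are precisely: $K_1,K_2$ independent on a dense set (case $(1)$), or $K_2=cK_1$ with $c\neq0$ (case $(2)$), the non-vanishing of $c$ coming from the fact that $K_2\equiv0$ would again force $f_+=f_-$. These three possibilities are mutually exclusive by construction.

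The main obstacle is establishing $\operatorname{Sym}\nabla\alpha=0$ on the locus $f_+\neq f_-$, i.e. the Killing property underlying cases $(1)$ and $(2)$. This is genuinely second order: one differentiates the $*$-Killing equation and applies the Ricci identity in the form $(\nabla_{e_k}\alpha)\wedge e^j-(\nabla_{e_j}\alpha)\wedge e^k=R_{e_k,e_j}\!\cdot\psi$, then runs the trace that isolates $\operatorname{Sym}\nabla\alpha$ and reduces it to a single contraction of the curvature against $\psi$. One must then use the special (ambiKähler, type-$D$) shape of the curvature in dimension four to see that the traceless symmetric part of this contraction vanishes exactly where $f_+\neq f_-$. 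The remaining subtlety, more bookkeeping than obstacle, is fixing the signs in the SD/ASD split so that the explicit formulas for $K_1,K_2$ hold, and justifying the global analytic dichotomy between $f_+\equiv f_-$ and $f_+\neq f_-$.
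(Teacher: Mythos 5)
The paper itself offers no proof of this proposition (it is quoted from \cite{gauduchon2015killing}), so your attempt has to stand on its own, and it has two genuine gaps, one of which is fatal to the plan as you have set it up. The heart of the proposition is that $K_1$ is Killing whenever $f_+\not\equiv f_-$ --- this is the entire content of cases (1)--(2) --- and you explicitly defer it ("the main obstacle") rather than prove it. Worse, the mechanism you sketch for it cannot work: the ambiK\"ahler/type-$D$ curvature structure you propose to invoke is present in \emph{all three} cases, since it comes from Proposition \ref{propGMAmbi}, i.e.\ from the mere existence of a non-parallel $*$-Killing $2$-form. Hence any pointwise argument deriving $\Sym\nabla\alpha=0$ from the $*$-Killing equation plus that curvature shape would apply verbatim in case (3), where $K_1$ is in general \emph{not} Killing; the trichotomy cannot be extracted from this mechanism. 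Two symptoms of the same problem: the claim that the relevant contraction "vanishes exactly where $f_+\neq f_-$" is not even the right pointwise statement (in cases (1)--(2) the field is Killing everywhere, including along the hypersurface $\{f_+=f_-\}$), and the trace of the Ricci identity that isolates $\Sym\nabla\alpha$ contains terms of the form $\psi\circ\mathrm{Ric}$, while the ambiK\"ahler condition constrains only the Weyl part of the curvature, not the Ricci part. Separately, the appeal to real-analyticity and unique continuation is unjustified: $g$ is only smooth, and the case-(1) local models contain two arbitrary smooth functions $A(x),B(y)$, so $\psi$ need not be analytic; the dichotomy "either $f_+\equiv f_-$ or $\{f_+=f_-\}$ has empty interior" is itself part of what must be proved.

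Your treatment of $K_2$ is a step that demonstrably fails. The SD/ASD splitting actually gives $\grad f_+=-J_+K_1$ \emph{and} $\grad f_-=-J_-K_1$ with the \emph{same} sign (both pairings $\langle\nabla_X\psi_\pm,\psi_\pm\rangle$ reduce to $\langle\alpha\wedge X^\flat,\psi_\pm\rangle$ because $*\psi_\pm=\pm\psi_\pm$), i.e.\ $K_1=J_+\grad f_+=J_-\grad f_-$, which is stronger than the sum formula you wrote. Feeding this into $K_2=\tfrac12(f_+J_+-f_-J_-)K_1$ yields $K_2=-\tfrac14\grad\bigl(f_+^2-f_-^2\bigr)$: a gradient field, and a gradient field is Killing iff it is parallel. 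Now test the case-(1) model, the hyperbolic ambitoric metric $g=g_1$ (e.g.\ Riemannian Kerr), where $f_\pm=x\pm y$: there $K_1=\partial_{t_1}$ is indeed Killing, but $-\tfrac14\grad(f_+^2-f_-^2)=-\grad(xy)$ has squared norm $\bigl(y^2A(x)+x^2B(y)\bigr)/(x^2-y^2)$, which is not constant, so it is not parallel and hence not Killing. Thus your inference "$K_1$ Killing $\Rightarrow$ $K_2$ Killing, after which one checks $\nabla K_2$ is skew" is false for the object defined by $\tfrac12(*\psi)^\sharp(K_1)$. (This also exposes a transcription problem in the paper's quoted definition: the vector field appearing in Gauduchon--Moroianu's trichotomy is, up to a constant, $\bigl((*\psi)^\sharp\bigr)^2K_1$, the image of $K_1$ under the Killing tensor $(*\psi)\circ(*\psi)$ of the Killing $2$-form $*\psi$; in the model it equals $\partial_{t_2}$.) Any successful proof must work with that object, and its Killing property, like that of $K_1$, requires the second-order integrability analysis you postponed rather than a formal consequence of $\mathcal L_{K_1}g=0$.
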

The authors go on to show that in the first case, $\mathcal U$ admits a regular ambitoric structure $(\mathcal U, [g],J_+,J_-)$ of hyperbolic type, where $g$ is identified with the metric $g_1$ ($g_p$ when $p=1$).
In the language that we have developed, \newline$(M,\mathcal U, g=g_1,J_+,J_-)$ is a regular ambitoric manifold completion. We can then apply our classification result to obtain our claimed result:

\begin{proof}[Proof of theorem \ref{thmIntroKilling}]
	By theorem \ref{thmCompletionClassification},
	 $\mathcal U$ can be embedded into some ambitoric ansatz space $\mathcal A$ whose momentum map images are polygonal. Since \newline$P=\{1=p(x,y)=0\}=\emptyset$, we can argue as in the proof of corollary \ref{corAmbiClass} that $\mathcal U$ is embedded as a box-type subset of $\mathcal A$. It follows that the action of $2$-torus on $\mathcal U$ extends to $M$ with at most $4$ fixed points, corresponding to the corners of the box. We can then apply the work of \cite{orlik1970actions}, which classifies oriented $4$-manifolds equipped with a $2$-torus action. Since the number of fixed points $t$ is at most $4$, this classification tells us that $M$ is (upto orientation reversal) diffeomorphic to one of $\{\mathbb S^4, \mathbb{CP}^2, \mathbb{CP}^2\#\mathbb{CP}^2,\mathbb S^2\times\mathbb S^2,\mathbb{CP}^2\#\overline{\mathbb{CP}^2}\}$.
	
	 Each Hirzebruch surface is diffeomorphic to either $\mathbb S^2\times\mathbb S^2$ or $\mathbb{CP}^2\#\overline{\mathbb{CP}^2}$, so it suffices to rule out $\mathbb{CP}^2\#\mathbb{CP}^2$. To see this, we note that the convexity of $\mu_\pm(\mathcal U)$ implies that we can ensure that the $\{\epsilon_i\}_{i=1}^{t}$ used in \cite{orlik1970actions} are all equal to $1$ by choosing the generators of the isotropy subgroups to correspond to inward normals for the sides of $\mu_\pm(\mathcal U)$. This rules out the case of $\mathbb{CP}^2\#\mathbb{CP}^2$, since this case has $t=4$ and $\epsilon_1\epsilon_4=-\epsilon_2\epsilon_3$.

Conversely, we must show that $\mathbb S^4$, $\mathbb{CP}^2$ and the Hirzebruch surfaces all admit metrics which admit non-parallel $*$-Killing $2$-forms. The case of $\mathbb S^4$ with the usual constant scalar curvature metric is explored in detail in \cite{gauduchon2015killing}. For $\mathbb{CP}^2$ and the Hirzebruch surfaces, we will show that they are regular ambitoric orbifold completions of hyperbolic type. The converse of proposition \ref{propGMAmbi} \cite{gauduchon2015killing} then provides the claimed $*$-Killing $2$-form. We will use the hyperbolic normal form from section \ref{SecHypNorm}, so that $q(x,y)=x+y$ and the conic is given by $\mathcal C_\pm = \{4\mu_1^\pm\mu_2^\pm=-1\}$. We will take $\Lambda$ to be the standard basis for $\mathfrak t$ in the coordinates given in the normal form.

We can construct lines tangent to $\mathcal C_\pm$ with normals 
 $\begin{pmatrix} 1 \\ 0\end{pmatrix},
  \begin{pmatrix} 0 \\ -1\end{pmatrix},$ and
 $\begin{pmatrix} -1 \\ 1\end{pmatrix}$, as in figure \ref{figHypProjPlane}. Each pair of normals generates $\Lambda$, so that the moment map image is a Delzant triangle. It is well known that the Delzant construction applied to a Delzant triangle is $\mathbb {CP}^2$.
 
 \begin{figure}[h]\label{figHypProjPlane}
 	\centering
 	\includegraphics[scale = 0.9]{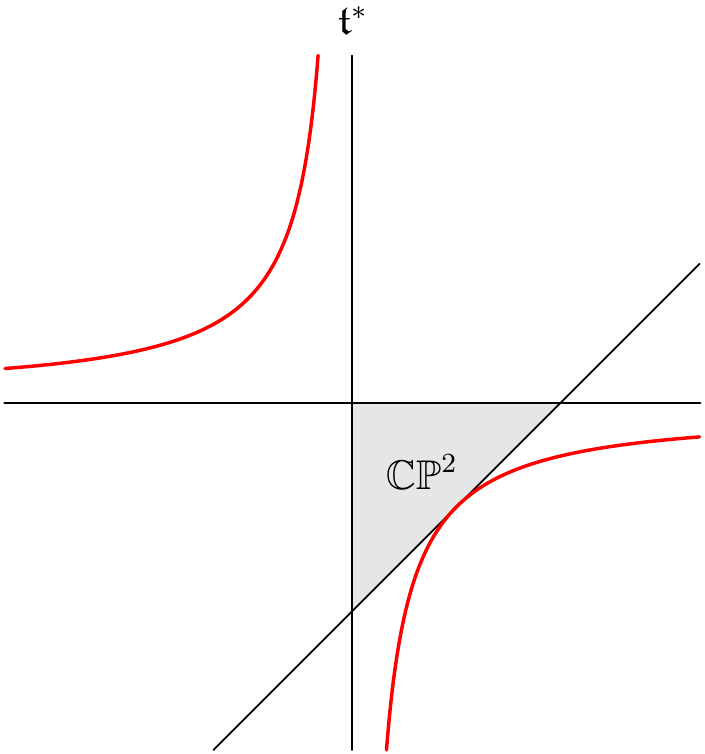}
 	\caption{The moment map image of $\mathbb{CP}^2$ with a hyperbolic ambitoric structure.}
 \end{figure}
 
 We can construct lines tangent to $\mathcal C_\pm$ with normals 
 $\begin{pmatrix} 1 \\ 0\end{pmatrix},
 \begin{pmatrix} -k-1 \\ k\end{pmatrix},
 \begin{pmatrix} 1 \\ -1\end{pmatrix},$ and
 $\begin{pmatrix} -1 \\ 1\end{pmatrix},$ as in figure \ref{figHypHirz}. As in the triangle case, it is easy to check that the trapezoid $\Delta$ formed by these lines is Delzant. Moreover the equation
  $\begin{pmatrix} 1 \\ 0\end{pmatrix} + \begin{pmatrix} -k-1 \\ k\end{pmatrix} = k \begin{pmatrix} -1 \\ 1\end{pmatrix}$ 
  implies that $\Delta$ is the moment-map image of the $k$th Hirzebruch surface $H_k$ (see \cite{karshon2007compact}). Note that to get all of the diffeomorphism types, it suffices to only construct $H_1$ and $H_2$, since the odd (respectively even) order Hirzebruch surfaces are all diffeomorphic (see for example \cite{karshon2007compact}). Moreover, $\mathbb S^2\times\mathbb S^2$ is diffeomorphic to $H_2$, so it is covered by this construction as well.

\begin{figure}[H]\label{figHypHirz}
	\centering
	\includegraphics[scale = 0.9]{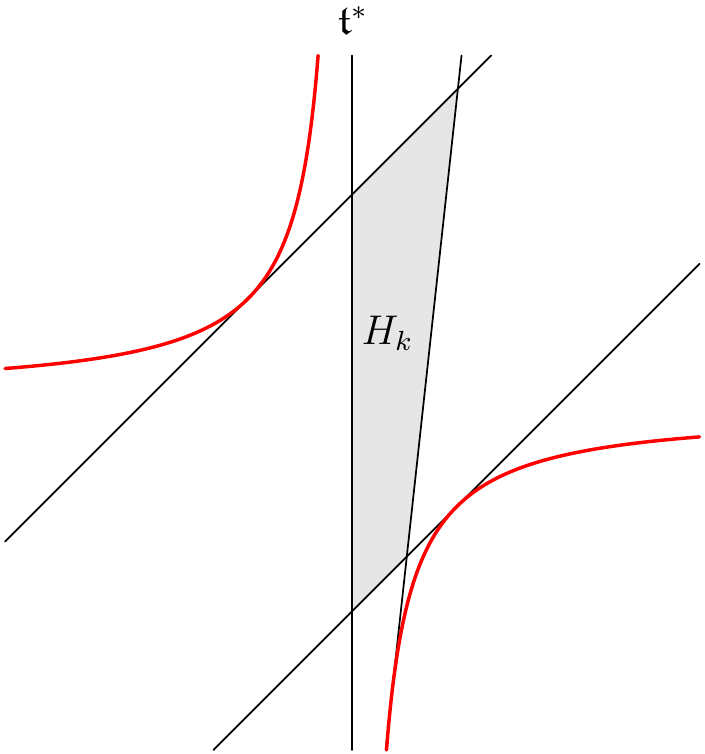}
	\caption{The moment map image of the $k$th Hirzebruch surface $H_k$ with a hyperbolic ambitoric structure.}
\end{figure}

\end{proof}

\section{Acknowledgments}

I would like to thank my doctoral supervisors Niky Kamran and Vestislav Apostolov for suggesting this project for me, as well as providing guidance in both the material and the style. I would also like to thank Yael Karshon for pointing me in the direction of folded symplectic structures.

\appendix

\section{Appendix: Busemann Completions}
 The Busemann completion of a Riemannian manifold is a set which includes the Cauchy completion as well as elements corresponding to directions at infinity. Our goal is to relate the Busemann completions of different metrics on the same manifold. We follow the work of \cite{flores2013gromov}, although we make some cosmetic changes of definitions to suit our goals.
 \begin{defn}
  Let $(M,g)$ be a Riemannian manifold. Let
  $$C(M):=\big\{c:[0,\infty)\to M \text{ piece-wise smooth curve}\big\}/\sim,$$
  where $\sim$ is equivalence of curves by reparametrization. We will refer to a class of curves $[c]\in C(M)$ simply by $c$, thinking of it as an unparametrized curve. For $c\in C(M)$, we define the \emph{Busemann function}
  $$b_c^g:M\to\mathbb R\cup\{\infty\}:
   x\mapsto\lim_{s\to\infty}
    \left(\int_0^s\|\dot c(t)\|_gdt-d_g\big(x,c(s)\big)\right),$$
 where $d_g$ is the distance with respect to $g$. $b_c$ is well defined, since it is clearly invariant under reparametrization of $c$.
 \end{defn}
 
 We reproduce some facts from \cite{flores2013gromov}:
 \begin{prop}[Proposition 4.15 in \cite{flores2013gromov}]\label{propBusemannBasics}\quad
 	\begin{itemize}
 		\item If $b_c^g(x)=\infty$ for some $x\in M$, then $b_c^g\equiv \infty$.
 		\item If $c\in C(M)$ has finite length, then there exists some $\bar x$ in the Cauchy completion $M_C^g$ of $M$ such that \begin{equation}\label{eqnBusemannCauchy}
 		 b_c^g(x)=\length_g(c)-d_g(x,\bar x).
 		\end{equation} 
 		Conversely, for every $\bar x\in M_C^g$, there exists a $c\in C(M)$ with finite length satisfying (\ref{eqnBusemannCauchy}).
 	\end{itemize}
 \end{prop}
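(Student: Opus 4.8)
The plan is to build everything on a single monotonicity observation and then read off all three assertions. Write $L(s):=\int_0^s\|\dot c(t)\|_g\,dt$ for the length of $c$ up to parameter $s$, so that $L$ is nondecreasing with $L(s)\to\length_g(c)\in[0,\infty]$, and set $F_s(x):=L(s)-d_g(x,c(s))$, so that by definition $b_c^g(x)=\lim_{s\to\infty}F_s(x)$. First I would show that for fixed $x$ the quantity $F_s(x)$ is nondecreasing in $s$: for $s'>s$ the triangle inequality together with $d_g\big(c(s),c(s')\big)\le L(s')-L(s)$ gives $d_g(x,c(s'))-d_g(x,c(s))\le L(s')-L(s)$, whence $F_{s'}(x)-F_s(x)\ge 0$. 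Consequently the limit defining $b_c^g(x)$ always exists in $\mathbb R\cup\{\infty\}$ and equals $\sup_s F_s(x)$; this is the fact that makes the Busemann function well defined and is the backbone of the whole argument.

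For the first bullet I would use that each $F_s$ is $1$-Lipschitz in $x$: the triangle inequality gives $F_s(x)\ge F_s(y)-d_g(x,y)$ for all $s$, and passing to the limit yields $b_c^g(x)\ge b_c^g(y)-d_g(x,y)$. Hence if $b_c^g(y)=\infty$ for some $y$, then $b_c^g(x)=\infty$ for every $x$, which is exactly $b_c^g\equiv\infty$. For the second bullet, suppose $\ell:=\length_g(c)<\infty$. Then $L(s)\to\ell$, and for $s'>s$ the estimate $d_g\big(c(s),c(s')\big)\le L(s')-L(s)\to 0$ shows that the net $\{c(s)\}_{s\to\infty}$ is $d_g$-Cauchy. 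It therefore converges to a point $\bar x$ of the Cauchy completion $M_C^g$. Since $d_g(x,\cdot)$ extends continuously to $M_C^g$, we get $d_g(x,c(s))\to d_g(x,\bar x)$, and therefore $b_c^g(x)=\lim_{s\to\infty}\big(L(s)-d_g(x,c(s))\big)=\ell-d_g(x,\bar x)$, which is precisely (\ref{eqnBusemannCauchy}).

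For the converse I would reverse this construction. Given $\bar x\in M_C^g$, choose a representing sequence $x_n\to\bar x$, pass to a subsequence with $d_g(x_n,x_{n+1})<2^{-n}$, and join $x_n$ to $x_{n+1}$ by a curve $\gamma_n$ whose length exceeds $d_g(x_n,x_{n+1})$ by at most $2^{-n}$ (possible since $d_g$ is an infimum of lengths). Concatenating the $\gamma_n$ and reparametrizing on $[0,\infty)$ produces a curve $c$ with $\length_g(c)\le\sum_n 2^{1-n}<\infty$ and $c(s)\to\bar x$; the computation of the second bullet then gives (\ref{eqnBusemannCauchy}) for this $c$.

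The main obstacle is bookkeeping rather than conceptual. In the converse one must verify that the concatenated curve genuinely converges to $\bar x$ (not merely that it is $d_g$-Cauchy): since $c$ passes through $x_n$ at parameters tending to $\infty$ and the tail lengths are summable, $d_g\big(c(s),x_n\big)\to 0$ forces $c(s)\to\bar x$. In the second bullet one must check that the $d_g$-Cauchy net has a well-defined limit in $M_C^g$, independent of the parametrization of $c$; this is immediate because $F_s$ depends only on the unparametrized image and the definition of $b_c^g$ is already reparametrization-invariant. The one genuinely non-obvious ingredient is the monotonicity of $F_s$ in $s$; everything else is the triangle inequality applied carefully together with the continuity of $d_g$ on the completion.
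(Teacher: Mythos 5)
The paper offers no proof of this proposition to compare against: it is quoted directly as Proposition 4.15 of \cite{flores2013gromov} (``We reproduce some facts from \cite{flores2013gromov}''), so the only question is whether your blind argument is sound, and it is. Your key observation --- that $F_s(x)=L(s)-d_g(x,c(s))$ is nondecreasing in $s$, via the triangle inequality together with $d_g\big(c(s),c(s')\big)\le L(s')-L(s)$ --- correctly establishes that the defining limit always exists in $\mathbb R\cup\{\infty\}$; the $1$-Lipschitz bound $b_c^g(x)\ge b_c^g(y)-d_g(x,y)$ then gives the first bullet (implicitly using that $d_g$ is finite, i.e.\ $M$ connected, which is the standing convention); finite length makes $s\mapsto c(s)$ a $d_g$-Cauchy net whose limit $\bar x\in M_C^g$ yields (\ref{eqnBusemannCauchy}); and your concatenation construction for the converse is correct, the only point requiring care being the one you flag: the concatenated curve must remain in $C(M)$, i.e.\ piecewise smooth with break points not accumulating at any finite parameter, which your reparametrization on $[0,\infty)$ (breaks at integers, tail lengths summable so $c(s)\to\bar x$) ensures. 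This monotonicity-plus-triangle-inequality argument is the standard route and is essentially the proof given in \cite{flores2013gromov} itself, so there is no substantive divergence to report.
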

 
 \begin{defn}
 	Let $B(M)^g:=\{b_c\}_{c\in C(M)}\backslash\{\infty\}$ be the set of \emph{finite Busemann functions}. The \emph{Busemann completion} of $(M,g)$ is given by $M_B^g:=B(M)^g/\mathbb R$, where $\mathbb R$ acts by the addition of constant functions.
 \end{defn}
 Note that the second part of proposition \ref{propBusemannBasics}, we can identify the Cauchy completion $M_C^g$ as a subset of the Busemann completion $M_B^g$ via the map $\bar x\mapsto[-d_g(\bar x,\cdot)]$.

 We define an equivalence relation on $C(M)$ by
 $$c_1 \sim_g c_2 \iff \exists r\in\mathbb R:b_{c_2}^g=b_{c_1}^g+r.$$
 The map $[c]\mapsto[b_c]$ gives us an isomorphism 
 $$C(M)/{\sim_g}\,\cong M_B^g\cup\{[c\in C(M):b_c^g\equiv \infty]\}.$$
 This allows us to interpret $M_B^g$ as a set of equivalence classes of curves. This will help us understand how $M_B$ depends on the metric since the curves are independent of the metric; only the equivalence relation changes.
 
 Let $g_1$ and $g_2$ be two Riemannian metrics on $M$. The rest of the appendix will be dedicated to proving the following proposition:
 
  \begin{prop}\label{propCauchyComparisons}
  	If $M_C^{g_2}$ is compact, then every point $\bar x\in M_C^{g_1}$ can be represented by a point $\bar x'\in M_C^{g_2}$, in the sense that there is a curve $c\in C(M)$ such that $[c]_{g_1}=\bar x$ and $[c]_{g_2}=\bar x'$.
  \end{prop}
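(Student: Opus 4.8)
The plan is to produce the curve $c$ by first representing $\bar x$ by a curve of finite $g_1$-length and then modifying it, using the compactness of $M_C^{g_2}$, so that it also acquires finite $g_2$-length. Since $\bar x\in M_C^{g_1}$, Proposition \ref{propBusemannBasics} supplies a finite-$g_1$-length curve $c_0\in C(M)$ with $[c_0]_{g_1}=\bar x$; parametrising by $g_1$-arclength, $c_0(s)$ is $g_1$-Cauchy as $s\to L:=\length_{g_1}(c_0)$. The goal then reduces to finding a representative of $\bar x$ that \emph{also} has finite $g_2$-length. Indeed, compactness of $M_C^{g_2}$ forces $(M,g_2)$ to have finite diameter, so any curve of infinite $g_2$-length has $b^{g_2}_c\equiv\infty$ by the first part of Proposition \ref{propBusemannBasics}; hence a curve has a well-defined finite $g_2$-Busemann class if and only if it has finite $g_2$-length, and in that case $[c]_{g_2}$ is exactly its $g_2$-Cauchy limit $\bar x'\in M_C^{g_2}$. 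Thus it suffices to exhibit one representative of $\bar x$ of finite length in both $g_1$ and $g_2$.

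Next I would extract a limiting direction in $g_2$. Since $M_C^{g_2}$ is compact, the tail $\{c_0(s)\}_{s<L}$ has a $g_2$-accumulation point, so I can choose $s_n\nearrow L$ with $c_0(s_n)\to\bar x'$ in $M_C^{g_2}$; passing to a subsequence, I may assume $d_{g_1}(c_0(s_n),\bar x)<2^{-n}$ and $d_{g_2}(c_0(s_n),\bar x')<2^{-n}$, so that the points $p_n:=c_0(s_n)$ satisfy $d_{g_1}(p_n,p_{n+1})<2^{-n+1}$ and $d_{g_2}(p_n,p_{n+1})<2^{-n+1}$. I then define $c$ to be an initial segment of $c_0$ followed by the infinite concatenation of connecting arcs $\gamma_n$ joining $p_n$ to $p_{n+1}$. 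The task is to choose the $\gamma_n$ so that both $\sum_n\length_{g_1}(\gamma_n)$ and $\sum_n\length_{g_2}(\gamma_n)$ converge. Granting this, the tail $g_2$-lengths tend to zero and the $p_n$ converge to $\bar x'$, so $c$ has finite $g_2$-length with $g_2$-limit $\bar x'$, while the finite total $g_1$-length together with $p_n\to\bar x$ in $g_1$ forces $[c]_{g_1}=\bar x$, completing the proof.

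The hard part is precisely the simultaneous control of the two lengths of the connecting arcs. Choosing $\gamma_n$ nearly $g_2$-minimising makes $\sum_n\length_{g_2}(\gamma_n)$ converge and confines $\gamma_n$ to the shrinking balls $B_{g_2}(\bar x',2^{-n+2})$, but gives no a priori bound on $\length_{g_1}(\gamma_n)$, since near the $g_2$-boundary the two smooth metrics need not be comparable. When $\bar x'\in M$ the difficulty evaporates: a small closed $g_2$-ball about $\bar x'$ is a compact subset of $M$, on which $g_1$ and $g_2$ are uniformly comparable, so the $g_2$-minimising arcs are automatically $g_1$-short and both sums converge. The essential work is the case $\bar x'\in M_C^{g_2}\setminus M$; here I would exploit that the compact completion of a length space is, by the Hopf--Rinow theorem \cite{bridson1999metric}, a geodesic space, and combine the resulting $g_2$-minimising geodesics with the local comparability of $g_1$ and $g_2$ on compact subsets of $M$ to route the $\gamma_n$ so that they stay short in both metrics. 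I expect this boundary case to be the main obstacle, and the only place where the compactness hypothesis on $M_C^{g_2}$ is used in an essential way.
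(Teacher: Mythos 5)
Your opening reduction is correct and agrees with the paper's: compactness of $M_C^{g_2}$ forces finite $g_2$-diameter, so by Proposition \ref{propBusemannBasics} a curve has a finite $g_2$-Busemann class if and only if it has finite $g_2$-length (this is the paper's final lemma, $M_C^{g}=M_B^{g}$ for compact completions), and it then suffices to exhibit one representative of $\bar x$ of finite length in both metrics. But the step you yourself flag as ``the hard part'' --- choosing arcs $\gamma_n$ from $p_n$ to $p_{n+1}$ with $\sum_n\length_{g_1}(\gamma_n)<\infty$ \emph{and} $\sum_n\length_{g_2}(\gamma_n)<\infty$ --- is a genuine gap, not a routine verification, and the tools you invoke cannot fill it. Smallness of both $d_{g_1}(p_n,p_{n+1})$ and $d_{g_2}(p_n,p_{n+1})$ does not yield a single arc short in both metrics: take $g_2=f^2g_1$ with $f$ enormous on a ``wall'' separating $p_n$ from $p_{n+1}$ except at a distant ``gate'' where $f$ is tiny; the points are then close in both metrics, yet every connecting arc either crosses the wall (long in $g_2$) or detours through the gate (long in $g_1$). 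This kind of incomparability near the boundary is exactly the generic situation in this paper, where the same hypersurface can be at finite distance for one of the metrics $g_0,g_\pm,g_p$ and infinitely distant for another. Moreover, your two suggested repairs act in the wrong place: uniform comparability of $g_1$ and $g_2$ holds on compact subsets of $M$, but the balls $B_{g_2}(\bar x',2^{-n+2})$ to which your near-minimizing arcs are confined are relatively compact only in $M_C^{g_2}$, not in $M$; and Hopf--Rinow applied to the compact length space $M_C^{g_2}$ produces geodesics of $M_C^{g_2}$, which may run through the boundary $M_C^{g_2}\setminus M$, hence are not curves in $M$ and have no $g_1$-length at all.

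For contrast, the paper's argument is built precisely to avoid ever needing doubly short connecting arcs. In Lemma \ref{lemLimGeodesic} it fixes a basepoint $p\in M$, takes $g_2$-geodesics from $p$ through a sequence $x_n$ that $g_1$-converges to $\bar x$, and uses compactness of the unit sphere $\mathbb S(T_pM)$ to extract a limiting direction; the limiting $g_2$-geodesic has $\bar x$ in the $g_1$-closure of its image. Lemma \ref{lemCauchyInBusemann} then shows this geodesic either has finite $g_2$-length or can be arranged to be a $g_2$-distance-minimizing ray with finite Busemann function, so its $g_2$-Busemann class is finite; compactness of $M_C^{g_2}$ finally identifies that class with a point $\bar x'\in M_C^{g_2}$. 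If you wish to keep your concatenation scheme, you would first have to prove the doubly-short-arc statement, which the example above suggests is false in general; some global device like the paper's geodesic-limit construction appears unavoidable.
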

  
  The following lemma will do most of the work for us:
   
  \begin{lemma}\label{lemLimGeodesic}
  	Let $(x_n)_{n=1}^\infty$ be a sequence in $M$ which $g_1$-converges to some $\bar x\in M_C^{g_1}$. Then there exists a $g_2$-geodesic curve $c$ such that $\bar x$ lies in the $g_1$-closure of the image of $c$. 
  \end{lemma}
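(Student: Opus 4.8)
The plan is to realize $c$ as a limit of \emph{minimizing $g_2$-geodesics whose endpoints are the $x_n$}, using the compactness of $M_C^{g_2}$ throughout (this is the standing hypothesis of proposition \ref{propCauchyComparisons}). If $\bar x\in M$ there is nothing to prove, since a constant curve at $\bar x$ is a degenerate $g_2$-geodesic with $\bar x$ on its image; so I would immediately reduce to the case where $\bar x\in\partial^{g_1}M$. The first step is to extract the limiting geodesic. Because $(M_C^{g_2},d_{g_2})$ is compact, it is a complete, locally compact length space, so by the Hopf--Rinow theorem \cite{bridson1999metric} it has finite diameter $D$ and any two of its points are joined by a minimizing geodesic. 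Viewing $(x_n)$ inside this compact space and passing to a subsequence, I may assume $x_n\to\bar x'$ in $g_2$ for some $\bar x'\in M_C^{g_2}$. Fixing a basepoint $p\in M$, I take for each $n$ a unit-speed minimizing $g_2$-geodesic $c_n:[0,\ell_n]\to M$ from $p$ to $x_n$ (its interior lying in $M$, as $M$ is open and dense and the endpoints lie in $M$); then $\ell_n=d_{g_2}(p,x_n)\le D$, and after a further subsequence $\ell_n\to\ell_\infty$. Since the $c_n$ emanate from the common point $p$, their initial velocities lie in the compact $g_2$-unit sphere of $T_pM$; extracting a convergent subsequence and invoking continuous dependence of the $g_2$-geodesic flow on initial conditions, the $c_n$ converge, uniformly on each compact subinterval of $[0,\ell_\infty)$, to a $g_2$-geodesic $c:[0,\ell_\infty)\to M$. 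On such subintervals the convergence is in the manifold topology, hence simultaneously in $g_1$ and $g_2$, since both metrics induce that topology on $M$.

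Next I would separate two cases according to whether $c$ leaves $M$. If $c$ extends continuously to $c(\ell_\infty)\in M$, then continuous dependence persists up to the endpoint, so $x_n=c_n(\ell_n)\to c(\ell_\infty)$ in the manifold topology and hence in $g_1$; thus $\bar x=c(\ell_\infty)$ lies on the image of $c$ and we are done. The substantive case is when $c(t)$ tends, as $t\to\ell_\infty$, to a point of $\partial^{g_2}M=M_C^{g_2}\setminus M$. Here a short computation using the minimizing property identifies that limit as $\bar x'$ itself: for $t<\ell_\infty$,
\begin{equation*}
 d_{g_2}\big(c(t),\bar x'\big)\le d_{g_2}\big(c(t),c_n(t)\big)+(\ell_n-t)+d_{g_2}\big(x_n,\bar x'\big)\xrightarrow[n\to\infty]{}\ell_\infty-t,
\end{equation*}
so $c$ is a $g_2$-geodesic running out to the same $g_2$-boundary point $\bar x'$ that the $x_n$ approach.

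It remains to show that $\bar x$ is a $g_1$-cluster point of the image of $c$, and this is the heart of the lemma. For $t<\ell_\infty$ and $n$ with $\ell_n>t$,
\begin{equation*}
 d_{g_1}\big(c(t),\bar x\big)\le d_{g_1}\big(c(t),c_n(t)\big)+\length_{g_1}\big(c_n|_{[t,\ell_n]}\big)+d_{g_1}\big(x_n,\bar x\big).
\end{equation*}
The first and third terms are controlled by the uniform-on-compacta convergence $c_n\to c$ and by $x_n\to\bar x$ in $g_1$, respectively. The main obstacle is the middle term, the $g_1$-length of the tail of $c_n$: near $\bar x'$ the metrics $g_1$ and $g_2$ are no longer comparable, so this tail can be $g_1$-long even though its $g_2$-length $\ell_n-t$ is small, and it is precisely here — at the $g_2$-boundary — that uniform convergence of $c_n\to c$ degenerates. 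I expect transferring the $g_1$-convergence of the endpoints $x_n$ onto points of the single limit geodesic $c$ across this boundary to be the only real difficulty.

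To defeat it, the plan is a coupled diagonal argument rather than fixing $t$ and sending $n\to\infty$ (which makes the first and middle terms fight over whether $t$ is in the good interior region or close to $\ell_n$). For each $k$ I would choose $t_k\nearrow\ell_\infty$ together with $n_k\to\infty$ growing fast enough that the first and third terms fall below $1/k$, and exploit that the restriction $c_{n_k}|_{[t_k,\ell_{n_k}]}$ is \emph{still} a minimizing $g_2$-geodesic terminating at $x_{n_k}$, whose endpoint is $g_1$-convergent to $\bar x$; the tail is then handled by comparing $c(t_k)$ to $x_{n_k}$ through $c_{n_k}(t_k)$ and absorbing its contribution into the already-controlled endpoint convergence, so that the fellow-travelling of $c_{n_k}$ and $c$ is exploited only on the interior where it is valid. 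Once this tail estimate is in place, letting $k\to\infty$ produces points $c(t_k)$ on the image of $c$ with $d_{g_1}\big(c(t_k),\bar x\big)\to 0$, placing $\bar x$ in the $g_1$-closure of the image of $c$ as required.
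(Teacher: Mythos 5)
There are genuine gaps here, and they are of three kinds. First, you have proved (at best) a weaker statement than the lemma: you assume throughout that $M_C^{g_2}$ is compact, but the lemma carries no such hypothesis, and the paper needs it without one. Compactness enters the appendix only at the very last step (the lemma showing $M_C^{g}=M_B^{g}$ when $M_C^g$ is compact); lemma \ref{lemLimGeodesic} feeds into lemmas \ref{lemDoubleLimGeodesic} and \ref{lemCauchyInBusemann}, which are stated and used without compactness --- in particular lemma \ref{lemCauchyInBusemann} is cited in the body of the paper for the pair $(g_1,g_2)=(g_f,g)$, where $\mathcal A_C^{g}$ is precisely \emph{not} compact whenever there are infinitely distant components. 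Second, even granting compactness, your construction of the $c_n$ fails: Hopf--Rinow applied to the length space $M_C^{g_2}$ produces minimizing paths in the \emph{completion}, and such a path can touch $\partial^{g_2}M=M_C^{g_2}\setminus M$ even when both endpoints lie in $M$ (flat disc with a radial slit removed: minimizers between points on opposite sides of the slit pass through the slit's tip, a boundary point). Openness and density of $M$ do not give your parenthetical claim that the interior of $c_n$ stays in $M$; and if $c_n$ leaves $M$ it is not a Riemannian geodesic, has no initial velocity in $\mathbb S(T_pM)$, and the geodesic-flow convergence argument collapses.

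Third, and most seriously, the step you yourself call the heart of the lemma is never proved: it is only a plan, and the plan is circular. Trading the tail length $\length_{g_1}\big(c_{n_k}|_{[t_k,\ell_{n_k}]}\big)$ for the distance $d_{g_1}\big(c_{n_k}(t_k),x_{n_k}\big)$ and ``absorbing it into the endpoint convergence'' does not help, because those two points are close only in $g_2$ (within $\ell_{n_k}-t_k$), and near the $g_2$-boundary $g_1$-distance is not controlled by $g_2$-distance --- which is exactly the incomparability you acknowledge two sentences earlier. Nothing in your argument converts $g_2$-smallness into $g_1$-smallness there, and that conversion is the entire content of the lemma. The paper's proof is structured to avoid this issue altogether: it takes rays $c_n=\exp_p^{g_2}(t\,u_n)$ through the $x_n$, extracts $u_{n_k}\to u_\infty$ in the compact sphere $\mathbb S(T_pM)$, and then argues set-theoretically that for every $\epsilon>0$ the points $x_{n_k}$ eventually lie in the cone $S_\epsilon$ swept out by geodesics with initial direction in $B_\epsilon(u_\infty)$, so $\bar x$ lies in the $g_1$-closure $\overline{S_\epsilon}$ for every $\epsilon$, and $\bigcap_{\epsilon>0}\overline{S_\epsilon}=\overline{\text{Image}(c)}$. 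No minimizing property, no uniform convergence of the $c_n$, and no control of geodesic tails is ever needed: the closure in $M_C^{g_1}$ is taken of the whole cone at once. To salvage your route you would have to actually prove the tail estimate, and I do not see a way to do that which does not amount to reproving the paper's cone containment.
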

  \begin{proof}
  	Let $p\in M$. For each $n\in\mathbb N$, let $c_n\in C(M)$ be a $g_2$-geodesic ray starting from $x_0$ and passing through $x_n$. Each $c_n$ is generated by the exponential mapping from a direction $u_n\in\mathbb S(T_pM)$ in the unit sphere bundle at $p$. Since $\mathbb S(T_pM)$ is compact, there exists a subsequence $(u_{n_k})_{k=1}^\infty$ of $(u_n)_{n=1}^\infty$ converging to some $u_\infty$. Let $c$ be the geodesic generated by $u_\infty$.
  	
  	Let $\epsilon>0$. Since $\lim_{k\to\infty}u_{n_k}=u_\infty$, there exists a $k_0\in\mathbb N$ such that $u_{n_k}\in B_\epsilon(u_\infty)$ for all $k>k_0$, where $B_\epsilon(u_\infty)$ is the $\epsilon$-ball around $u_\infty$ in $\mathbb S(T_pM)$. Consider the set
  	$$S_\epsilon :=\{\exp_p^{g_2}(tu):t\in [0,\infty),u\in B_\epsilon(N_\infty),tu\in\text{domain}(\exp_p^{g_2})\}\subseteq M.$$
  	For each $k>k_0$, $$S_\epsilon\supset \text{Image}(c_{n_k})\owns x_{n_k}.$$
  	Thus $$\overline{S_\epsilon}\supset\overline{\{x_{n_k}\}_{k>k_0}}\owns\bar x,$$
  	where $\overline{\cdot}$ indicates the closure in $M_C^{g_1}$. Since $\epsilon>0$ was arbitrarily chosen, we find that 
  	$$\bar x\in\bigcap_{\epsilon>0}\bar S_{\epsilon}=\overline{\text{Image}(c)}.$$ 
  \end{proof} 
 
 \begin{lemma}\label{lemDoubleLimGeodesic}
 	Let $g_3$ be a third metric on $M$. 
 	Let $(x_n)_{n=1}^\infty$ be a sequence in $M$ which $g_1$-converges to some $\bar x_1\in M_C^{g_1}$ and $g_3$-converges to some $\bar x_3\in M_C^{g_3}$. Then there exists a $g_2$-geodesic curve $c$ such that $\bar x_j$ lies in the $g_j$-closure of the image of $c$ for each $j\in\{1,3\}$.
 \end{lemma}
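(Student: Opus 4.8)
The plan is to observe that Lemma~\ref{lemDoubleLimGeodesic} follows from exactly the same construction used to prove Lemma~\ref{lemLimGeodesic}, the only new point being that a single subsequence can be made to witness both limits simultaneously. The key structural fact is that the $g_2$-geodesic $c$ is extracted purely from data at a base point (the initial directions in the unit sphere), so its construction is insensitive to which metric is used to take limits of the $x_n$.

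Concretely, I would first fix a base point $p\in M$ and, for each $n$, choose a $g_2$-geodesic ray $c_n$ from $p$ passing through $x_n$, recording its initial direction $u_n\in\mathbb S(T_pM)$, exactly as in the proof of Lemma~\ref{lemLimGeodesic}. By compactness of $\mathbb S(T_pM)$ I would then pass to a subsequence $(u_{n_k})_{k=1}^\infty$ converging to some $u_\infty$, and set $c$ to be the $g_2$-geodesic generated by $u_\infty$. The crucial observation is that passing to this subsequence preserves \emph{both} limits: since $(x_n)$ $g_1$-converges to $\bar x_1$ and $g_3$-converges to $\bar x_3$, the subsequence $(x_{n_k})$ still $g_1$-converges to $\bar x_1$ and $g_3$-converges to $\bar x_3$.

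I would then run the fan argument from Lemma~\ref{lemLimGeodesic} twice with the same sets $S_\epsilon=\{\exp_p^{g_2}(tu):t\in[0,\infty),\,u\in B_\epsilon(u_\infty),\,tu\in\mathrm{domain}(\exp_p^{g_2})\}$. For each $\epsilon>0$ and all sufficiently large $k$ we have $x_{n_k}\in\mathrm{Image}(c_{n_k})\subseteq S_\epsilon$, so $\bar x_1$ lies in the $g_1$-closure $\overline{S_\epsilon}^{\,g_1}$ and $\bar x_3$ lies in the $g_3$-closure $\overline{S_\epsilon}^{\,g_3}$. Intersecting over all $\epsilon>0$ and invoking the identity $\bigcap_{\epsilon>0}\overline{S_\epsilon}^{\,g_j}=\overline{\mathrm{Image}(c)}^{\,g_j}$ established in the proof of Lemma~\ref{lemLimGeodesic} yields $\bar x_j\in\overline{\mathrm{Image}(c)}^{\,g_j}$ for each $j\in\{1,3\}$, which is the claim.

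There is essentially no genuine obstacle here beyond bookkeeping: the subsequence extraction is governed entirely by $g_2$, and its compatibility with convergence in both the $g_1$- and $g_3$-senses is automatic, since a subsequence of a convergent sequence converges to the same limit. The only point worth double-checking is that the two applications of the fan argument genuinely use the \emph{same} geodesic fan $S_\epsilon$ --- which they do, since $S_\epsilon$ depends only on $p$, $g_2$, and $u_\infty$ --- so that the single geodesic $c$ simultaneously witnesses both closures.
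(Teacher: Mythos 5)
Your proposal is correct and is essentially the paper's own argument: the paper's proof simply says to apply Lemma \ref{lemLimGeodesic} twice (once with $g_3$ in place of $g_1$) and to note that the resulting $g_2$-geodesic is the same both times because its construction does not depend on $g_1$, which is exactly the observation you make about the subsequence extraction and the fan $S_\epsilon$ being governed entirely by $p$, $g_2$, and $u_\infty$. Your write-up just makes explicit the bookkeeping (a subsequence of a convergent sequence keeps its limit in each metric) that the paper leaves implicit.
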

 \begin{proof}
 	This follows from applying the previous lemma twice (once using $g_3$ in place of $g_1$), and noting from the proof that resulting curve would be the same, since it doesn't depend on $g_1$.
 \end{proof}

  We would like to build an equivalence relation which includes the information of both $\sim_{g_1}$ and $\sim_{g_2}$. We define the relation $\sim_{g_2}^{g_1}$ by 
 $$c_1\sim_{g_2}^{g_1} c_2 \iff c_1\sim_{g_1} c_2 \text{ or }c_1\sim_{g_2} c_2.$$
 Note that  $\sim_{g_2}^{g_1}$ is not an equivalence relation in general, since it may not be transitive. To see this, consider the following example. Let $M=\mathbb S^1\times\mathbb S^1\times\mathbb R.$ Let $g_1$ (respectively $g_2$) be a metric which contracts the first (respectively second) $\mathbb S^1$ factor to a point at the origin of $\mathbb R$. For example, using coordinates $(\theta_1,\theta_2,r)\in\mathbb S^1\times\mathbb S^1\times\mathbb R$,
 \begin{align*}
  g_1&=r^2 d\theta_1^2 + d\theta_2^2+dr^2, \\
  g_2&=d\theta_1^2 + r^2d\theta_2^2+dr^2.
 \end{align*}
 Let $\{t_1,t_2,t_3\}$ be different points in $\mathbb S^1$. Consider the constant curves on $M:$
 \begin{align*}
  c_1 = (t_1,t_2,0), \\ c_2 = (t_3,t_2,0), \\ c_3 = (t_3,t_1,0).
 \end{align*}
 We see that $c_1\sim_{g_1} c_2\sim_{g_2} c_3$, but $c_1\not\sim^{g_1}_{g_2} c_3$.

 Consider the equivalence relation $\approx$ on $C(M)$ defined by $c_1\approx c_2$ if and only if there exists a finite set of curves $\{C_i\}_{i=1}^m\subset C(M)$ such that 
 $$c_1\sim_{g_2}^{g_1} C_1 \sim_{g_2}^{g_1} C_2 \sim_{g_2}^{g_1}\cdots\sim_{g_2}^{g_1} C_m\sim_{g_2}^{g_1} c_2.$$
 Define $M_B^\approx:= (C(M)/\approx)\backslash\{[c\in C(M):b_c^{g_1}=g_c^{g_2}\equiv \infty]\}.$ Let $\pi_i:M_B^{g_i}\to M_B^\approx$ be the natural quotient map for each $i\in\{1,2\}$.

 \begin{lemma}\label{lemCauchyInBusemann}
 	$\pi_1(M_C^{g_1})\subseteq\pi_2(M_B^{g_2}).$
 \end{lemma}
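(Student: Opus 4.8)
The plan is to unwind the definitions so that the inclusion becomes a statement about curves. A point of $M_C^{g_1}$, regarded inside $M_B^{g_1}$ and pushed forward by $\pi_1$, is a single $\approx$-class; that class lies in $\pi_2(M_B^{g_2})$ exactly when it contains some curve whose $g_2$-Busemann function is finite (equivalently, not $\equiv\infty$). So the goal is: for each $\bar x\in M_C^{g_1}$ the $\approx$-class $\pi_1(\bar x)$ contains a curve with finite $g_2$-Busemann function. If $\bar x\in M$ this is trivial (the constant curve at $\bar x$ works), so assume $\bar x$ is a genuine boundary point.

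First I would fix representatives. By Proposition \ref{propBusemannBasics} choose a finite-$g_1$-length curve $c_0$ with $b_{c_0}^{g_1}=\length_{g_1}(c_0)-d_{g_1}(\cdot,\bar x)$, so that $[c_0]_{g_1}=\bar x$ under the embedding $M_C^{g_1}\subseteq M_B^{g_1}$ and $\pi_1(\bar x)=[c_0]_\approx$. Taking $x_n:=c_0(s_n)$ with $s_n$ tending to the endpoint gives a sequence with $x_n\to\bar x$ in $g_1$, and Lemma \ref{lemLimGeodesic} (with $g_2$ as the geodesic metric) then produces a $g_2$-geodesic $c$ with $\bar x\in\overline{\mathrm{Im}(c)}^{\,g_1}$; in particular there are parameters $t_k$ with $c(t_k)\to\bar x$ in $g_1$. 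Since $c$ is a $g_2$-geodesic its $g_2$-Busemann function is finite: if $c$ has infinite $g_2$-length then $s\mapsto s-d_{g_2}(\cdot,c(s))$ is monotone and bounded, and if $c$ has finite $g_2$-length Proposition \ref{propBusemannBasics} applies. Thus $[c]_{g_2}\in M_B^{g_2}$ and $\pi_2([c]_{g_2})=[c]_\approx$, so the entire lemma reduces to the single claim $c\approx c_0$.

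I would establish $c\approx c_0$ according to the $g_1$-length of $c$. If $c$ has finite $g_1$-length, it is $g_1$-Cauchy toward its endpoint and hence $g_1$-converges to a unique point of $M_C^{g_1}$; as $\bar x$ is a boundary point lying in its $g_1$-closure, that point is $\bar x$. Then $c$ and $c_0$ are both finite-$g_1$-length curves $g_1$-converging to $\bar x$, so their $g_1$-Busemann functions agree up to a constant and $c\sim_{g_1}c_0\Rightarrow c\approx c_0$. If $c$ has infinite $g_1$-length, then $b_c^{g_1}\equiv\infty$ and $\sim_{g_1}$ cannot link $c$ to $c_0$ directly, so I would build a finite chain through an auxiliary curve $e$: passing to a subsequence with $\sum_k d_{g_1}(c(t_k),c(t_{k+1}))<\infty$, let $e$ be the concatenation of $g_1$-minimizing segments joining the $c(t_k)$. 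Then $e$ has finite $g_1$-length and $g_1$-converges to $\bar x$, so $e\sim_{g_1}c_0$; moreover, if $c$ has \emph{finite} $g_2$-length the points $c(t_k)$ also $g_2$-converge to the Cauchy point represented by $c$, and a further subsequence makes $e$ finite-$g_2$-length with that same $g_2$-limit, so $e\sim_{g_2}c$ and the chain $c\sim_{g_2}e\sim_{g_1}c_0$ closes.

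The main obstacle is the remaining case, in which $c$ is a complete $g_2$-geodesic ray (infinite $g_2$-length) that nonetheless has infinite $g_1$-length while accumulating at $\bar x$ in $g_1$. Here the transition curve $e$ must simultaneously have finite $g_1$-length (to be $\sim_{g_1}c_0$) and carry the same $g_2$-horofunction as the ray $c$ (to be $\sim_{g_2}c$), and these two requirements pull in opposite directions. I expect to handle this by working with the geodesic tails $c^{(k)}(s):=c(t_k+s)$, which satisfy $c^{(k)}\sim_{g_2}c$ and emanate from points $c(t_k)$ that are $g_1$-close to $\bar x$, and then comparing $g_2$-Busemann values along the polygon through the $c(t_k)$; the content reduces to the summability estimate $\sum_k\big(\length_{g_2}(\text{connector}_k)-(t_{k+1}-t_k)\big)<\infty$ along a rapidly convergent subsequence, which would make the polygon $\sim_{g_2}$-equivalent to $c$ while keeping it finite in $g_1$. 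Controlling this quantity — i.e. that the $g_2$-lengths of the $g_1$-short connectors do not accumulate excess over the geodesic — is the delicate step on which the whole argument hinges, and it is exactly the point at which the interplay between the two metrics (rather than either one alone) must be used.
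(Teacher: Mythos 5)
There is a genuine gap, and it sits exactly where the paper's proof does its real work. Your claim that the $g_2$-geodesic $c$ produced by Lemma \ref{lemLimGeodesic} automatically has finite $g_2$-Busemann function --- because $s\mapsto s-d_{g_2}(x,c(s))$ is ``monotone and bounded'' --- is false. Monotone, yes; bounded, no. Boundedness above holds only when the geodesic is (asymptotically) distance-minimizing. A geodesic of infinite length whose image stays in a $g_2$-bounded region, for instance one wrapping forever around a flat torus, has $d_{g_2}(x,c(s))$ bounded while $s\to\infty$, so $b_c^{g_2}\equiv\infty$. This failure mode is not hypothetical here: in the paper's only application of this lemma, $g_2$ is the flat metric $g_f$ on a subset of $(\mathbb{RP}^1)^2\times\mathbb T$, which contains exactly such geodesics, and nothing in Lemma \ref{lemLimGeodesic} prevents the limit geodesic from being one of them. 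The paper's proof is organized precisely around this difficulty: arguing by contradiction, it splits according to whether the image of the given curve is $g_2$-bounded. In the bounded case it applies Lemma \ref{lemDoubleLimGeodesic} to a $g_2$-convergent subsequence, so the limit geodesic accumulates at a point of $M_C^{g_2}$, has finite $g_2$-length, and Proposition \ref{propBusemannBasics} gives finiteness of its Busemann function. In the unbounded case it chooses points with $d_{g_2}(x_0,c(t_n))\to\infty$ and invokes the continuity of the cut-time function $u\mapsto\sup\{t>0:d_{g_2}(x_0,\exp_{x_0}tu)=t\}$ (citing Sakai) to conclude that the limit geodesic is a globally minimizing ray, whose Busemann function equals $0$ at $x_0$. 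Without some version of this dichotomy, your construction can output a geodesic with $b^{g_2}\equiv\infty$ and the argument stops.

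The second problem is the one you flag yourself: tying $c$ back to $c_0$ inside the $\approx$-class. Because of the first gap, your ``remaining case'' (infinite length in both metrics) is not a marginal case but the generic bad one, and the summability estimate you propose for it is not proved. Nor is the step preceding it: $g_1$-minimizing connectors between the points $c(t_k)$ have completely uncontrolled $g_2$-length (two points can be close in $g_1$ and close in $g_2$ while every single curve joining them is long in one of the two metrics --- this is exactly the phenomenon behind the paper's $\mathbb S^1\times\mathbb S^1\times\mathbb R$ example showing $\sim_{g_2}^{g_1}$ is not transitive), so passing to further subsequences cannot make $e$ have finite $g_2$-length. Note that the paper sidesteps chain-building entirely: it unravels $\pi_1(\bar x)\notin\pi_2(M_B^{g_2})$ into the statement that \emph{every} curve $g_1$-converging to $\bar x$ has $b^{g_2}\equiv\infty$, and then contradicts this by exhibiting a single geodesic accumulating at $\bar x$ whose $g_2$-Busemann function is finite; your more explicit direct formulation, which insists on producing the $\approx$-equivalence by hand, is what forces you into the case analysis you could not close.
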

 \begin{proof}
  Assume that there exists $\bar x\in M_C^{g_1}$ such that $\pi_1(\bar x)\notin\pi_2(M_B^{g_2})$. Unravelling the definitions, this means that for each curve $c\in C(M)$ $g_1$-converging to $\bar x$, $b_c^{g_2}\equiv\infty$. Fix $c(t)$ a parametrized curve $g_1$-converging to $\bar x$. Since $b_c^{g_2}\equiv\infty$, $c$ has infinite $g_2$-length by proposition \ref{propBusemannBasics}.
  
  \textbf{Claim:} The image of $c$ is unbounded with respect to $g_2$.
  \begin{proof}[Proof of claim]
  	Assume that the claim is false. Then the image of $c$ lies in some closed set $B\subset M_C^{g_2}$ which is bounded with respect to $g_2$. Thus there exists a sequence $(t_n)_{n=1}^\infty$ in $\mathbb R$ such that $\big(c(t_n)\big)_{n=1}^\infty$ converges in $g_2$ to some $\bar x'\in B$. Applying lemma \ref{lemDoubleLimGeodesic} we find a $g_2$ geodesic curve $c'$ which $g_1$-converges to $\bar x$ and $g_2$ converges to $\bar x'$. Since $c'$ $g_2$-converges to $\bar x'$ it has finite $g_2$-length. Thus $b_{c'}^{g_2}\not\equiv\infty$, contradicting the assumption on $\bar x$ since $c'$ $g_1$-converges to $\bar x$.
  \end{proof}
  The claim allows us to find a sequence $(t_n)_{n=1}^\infty$ in $\mathbb R$ such that for any $x_0\in M$, $\lim_{n\to\infty}d_{g_2}\big(x_0,c(t_n)\big)=\infty$. Applying lemma \ref{lemLimGeodesic} to this sequence gives a $g_2$-geodesic curve $c'$ which $g_1$-converges to $\bar x$. From \cite{sakai1996riemannian} the function
  $$t:\mathbb S(T_{x_0}M)\to(0,\infty]:u\mapsto\sup\left\{t>0:d_{g_2}(x_0,\exp_{x_0}tu)=t\right\}$$
  is continuous. Thus $$t(u_\infty)=\lim_{k\to\infty}t(u_{n_k})\geq\lim_{k\to\infty}d_{g_2}\big(x_0,c(t_{n_k})\big)=\infty,$$
  so that $t(u_\infty)=\infty,$ where $(u_{n_k})_{k=1}^\infty$ is the sequence constructed in lemma \ref{lemLimGeodesic}. In other words, the geodesic $c'$ generated by $u_\infty$ is $g_2$-distance minimizing. Thus
  $$b_{c'}^{g_2}(x_0) = \lim_{s\to\infty}\left(
   \int_0^s\|\dot{c}_\infty(t)\|_{g_2}dt - d_{g_2}(x_0,c_\infty(s))
   \right) =\lim_{s\to\infty}0=0,$$
  so that $b_{c'}^{g_2}\not\equiv\infty.$ This contradicts the assumption on $\bar x$, since $c'$ $g_1$-converges to $\bar x$.
 \end{proof}
 
 \begin{lemma}
 	If $M_C^{g}$ is compact, then $M_C^{g}=M_B^{g}$.
\end{lemma}
 \begin{proof}
 	Let $x_0\in M$. Since $M_C^g$ is compact, the diameter of $M$ is bounded, so that the function $d_g(x_0,\cdot)$ is bounded. This implies that $b_c^g(x_0)=\infty$ for all infinite length curves $c$. The result then follows from proposition \ref{propBusemannBasics}.
 \end{proof}
 
 Combining the previous two lemmas produces a proof of proposition \ref{propCauchyComparisons}.
 
% \bibliographystyle{plain}
 
% \bibliography{myBib}
%\include{DegenerateAmbitoricMomentMaps.bbl}
\begin{comment}

\end{comment}

%\begin{comment}
\newcommand{\etalchar}[1]{$^{#1}$}

%\end{comment}

\end{document}